\DeclareMathOperator{\sign}{sign}
\theoremstyle{plain}
\begin{document}



\begin{center}
{\it In memory of P. Brockwell (1937-2023) and O.E. Barndorff-Nielsen (1935-2022)}
\end{center}

\section{Introduction}

This article commemorates the pioneering contributions in stochastics of P. Brockwell and O.E.~Barndorff-Nielsen and develops a new model of supCAR random fields based on the approaches they suggested. 

This paper focuses on a modelling framework based on the so-called ambit fields~\cite{BarndorffNielsen2018}, which have first been proposed in the context of modelling turbulence in physics, and have been further developed in applications ranging from tumour growth in biology to financial markets and energy systems, see~\cite{barndorff1997processes, BarndorffNielsen2018, bnleonenko2005, barndorff2001non}. During the last decade, Ambit Stochastics has established itself as a new research area of probability theory and statistics, complementing and extending research on L\'{e}vy processes and infinitely divisible distributions.

An ambit field is defined as a stochastic integral of a deterministic kernel function with respect to an infinitely divisible random measure. Special cases of ambit fields include purely temporal processes (such as Ornstein-Uhlenbeck~(OU) processes, continuous-time autoregressive moving average processes, (mixed) moving average processes, supOU processes and trawl processes) and purely spatial processes (such as Gaussian fields with, for instance, the Mat\'{e}rn correlation function).

Many fundamental results in the contemporary theory of random processes and fields are established under the assumptions of Gaussianity and fast decay of tails of marginal distributions, see, for example, the results on geometric properties of Gaussian fields~\cite{adler1981random, adler2009random}, asymptotic analysis of their functionals \cite{leonenko1999}, and their extreme behaviour~\cite{adler2009random, azais2009level}.
The class of infinitely divisible random fields generalises the class of Gaussian fields. It includes fields with marginal distribution tails decaying more slowly than exponentially, in particular, fields with infinite mean and variance. Infinitely divisible fields may exhibit properties that are not typical for Gaussian ones. The geometry of smooth infinitely divisible fields differs significantly from that of smooth Gaussian fields. For instance, excursion sets above high levels for smooth Gaussian fields are, with high probability, approximately ellipsoidal, while for smooth infinitely divisible fields, excursion sets can have almost any shape~\cite{adler2013high}. More intricate properties of infinitely divisible fields lead to limit theorems with distributions that are not typical for Gaussian models \cite{puplinskaite2016aggregation}, making a statistical inference for infinitely divisible fields more complex.

One of the approaches to obtain infinitely divisible fields was recently introduced in~\cite{brockwell2017continuous}, where the authors constructed continuous autoregressive moving average random
fields on $\mathbb{R}^d$ (CARMA($p$,$q$) fields, where $p$ is the order of autoregression and $q$ is the order of moving average) as stochastic integrals of CARMA kernels. CARMA fields are the multidimensional continuous-time analogues of ARMA time series, possessing infinitely divisible marginal distributions and a large family of covariance functions. In its simplest form with $p=1$ and $q=0$, CARMA fields become continuous autoregressive fields (CAR(1)) defined as
\[ S_\lambda(\bm{t}) := -\int\limits_{\mathbb{R}^d} \frac{1}{2\lambda} e^{-\lambda||\bm{t}-\bm{u}||}d{L(\bm{u})}, \ \bm{t}\in\mathbb{R}^d, \] where $L(\cdot)$ is a L\'evy sheet and $\lambda>0$ is fixed. CAR(1) fields possess the Mat\'ern covariances scaled by $\lambda.$ Their infinitely-divisible marginal distributions depend on the choice of $L(\cdot).$

This paper introduces a new class of ambit fields with a rich class of marginal distributions and flexible dependence structures. It is called supCAR random fields, since their kernel is taken from CARMA random fields. We exploit the idea of superpositions of stochastic processes in $\mathbb{R}$ proposed in the seminal paper by Barndorff-Nielsen \cite{barndorff2001superposition}, where superpositions of independent Ornstein–Uhlenbeck type processes (SupOU processes) have provided flexible and analytically tractable parametric models.   This paper studies the following superpositions defined as Rajput-Rosinski integrals \cite{rajput1989spectral} of CAR(1) kernels 
\[ X(\bm{t}) := -\int\limits_0^\infty\int\limits_{\mathbb{R}^d} \frac{1}{2\lambda} e^{-\lambda||\bm{t}-\bm{u}||}\Lambda(d\bm{u}, d\lambda),\ \bm{t}\in\mathbb{R}^d, \] where $\Lambda(\cdot)$ is an independently scattered scalar infinitely divisible random measure.
Unlike the Ornstein–Uhlenbeck kernels used in supOU processes, CAR(1) kernels are more complex and can have singularities at the origin. Thus, while the considered supCAR fields have some similar properties to supOU processes, they are not their direct generalisations but form a distinct class of models with novel local and asymptotic properties. One of the main tools is the obtained representation for supCAR's joint cumulant~function. 

An important feature of the introduced supCAR fields is that their dependence structures are determined by the specifications of the superposition and do not depend on the marginal distributions of the underlying CAR(1) fields. This allows the marginal distributions and covariances of the supCAR fields to be specified separately.
Moreover, supCAR fields exhibit various spatial dependence structures governed by a new, rich class of covariance functions. The corresponding covariance functions are not necessarily non-negative and not necessarily monotonically decreasing. By specifying the superpositions, one can obtain the desired local behaviour of a supCAR field, making its realisations smoother or rougher, while simultaneously ensuring that the large-scale behaviour exhibits either short- or long-range dependence.

The asymptotic behaviour of supCAR fields is studied under general conditions. We focus on functional limit theorems and use the cumulant method. The established limit theorems cover both weak and long-range dependence regimes. The obtained limit theorems could be viewed as complementary to the classical limit theorems for stochastic processes and fields, particularly those involving Gaussian subordinated models, consult~\cite{leonenko1999, leonenko2014sojourn, OlenkoOmari2020, ta1,ta2} and the references therein on asymptotics under weak and long-range dependence regimes. Within the considered framework, there exist interesting limiting scenarios, particularly in the context of the long-range dependence regime, that did not appear in the aforementioned publications. 
Contrary to the case of integrated Gaussian fields, for which two possible asymptotic scenarios are the Brownian motion and generalised Brownian motion, for integrated supCAR fields there are four limit scenarios (Brownian motion, generalised Brownian motion, $\alpha$-stable Levy process, and $\gamma$-stable process). The results generalise the recent findings for superpositions of Ornstein-Uhlenbeck-type processes in \cite{grahovac2019limit} to the case of random fields and their averages over arbitrary multidimensional sets. It is demonstrated how the limit distributions depend on the considered sets, settings of superpositions (short- or long-range dependence of supCAR fields) and marginal distributions of the underlying CAR(1) processes. Compared to the case of supOR processes, the study of supCAR random fields and the corresponding functionals requires different assumptions and addressing specific challenges arising from the multidimensional setting.

The paper is structured as follows. Section~\ref{Preliminaries} provides the main definitions and notations. Section~\ref{sec:supcar} introduces the superpositions of continuous autoregressive fields. Then, this section
investigates the conditions for the existence of supCAR fields, their finite-dimensional distributions, and dependence structures. This section also derives closed-form representations for their covariance functions and spectral densities. Section~\ref{sect:limit} is dedicated to the examination of limit theorems for integrated supCAR fields. The obtained theoretical findings are illustrated via simulation studies of Gamma fields.

All numerical computations, simulations and plotting in this paper were performed using the software R (version 4.5.0) and Python (version 3.10.10). The corresponding code is freely available in the folder
”Research materials” from the website \url{https://sites.google.com/site/olenkoandriy/}.

\section{Preliminaries}\label{Preliminaries}
Throughout the paper, bold letters will be used to denote vectors (e.g. $\textit{\textbf{x}}$), while numbers and scalar variables will be denoted in a regular font.   In what follows, $||\cdot||$ is the Euclidean norm in $\mathbb{R}^d,$ $|\cdot|$ and $Leb(\cdot)$ denotes the Lebesgue measure on $\mathbb{R}^d$, and $\sigma(\mathbb{R}^d)$ stands for the $\sigma$-algebra of Borel sets on $\mathbb{R}^d.$ $B_\textit{\textbf{x}}(r)=\{\textit{\textbf{y}}\in\mathbb{R}^d: \ ||\textit{\textbf{x}}-\textit{\textbf{y}}||\leq r\}$ denotes a ball in $\mathbb{R}^d$ with the center at $\textit{\textbf{x}}$ and the radius $r\geq0.$ If the center is at the origin $\bm{x}=\bm{0},$ we use the simplified notation $B(r).$ $\mathcal{F}_d$ denotes $d$-dimensional Fourier transform. Throughout the paper, $\mathcal{S}$ is a $\sigma$-ring (a ring which is closed under the countable intersection) of the subsets of $\mathbb{R}^d$ with the property that there exists an increasing sequence $\{ S_n \}$  of sets in $\mathcal{S}$ such\ that $\cup_n S_n=\mathbb{R}^d,$ while the $\sigma$-ring of subsets of $\mathbb{R}^+\times\mathbb{R}^d$ with the same property is denoted by $\mathcal{S}'.$ $\sigma(\mathcal{S})$ and $\sigma(\mathcal{S}')$ denote the smallest $\sigma$-algebras containing $\mathcal{S}$ and $\mathcal{S}'$ respectively. The notations $\overset{d}{\to}$ and $\overset{fdd}{\to}$ are used for the convergence in distribution and the convergence of finite-dimensional distributions, respectively. 

Most of the following notations related to infinite divisibility are adapted from \cite{barndorff2001superposition}. In the following, $C$ represents a finite positive constant, which is not necessarily the same in each expression.

A random variable $\xi$ is infinitely divisible if its cumulant function has the following L\'evy-Khinchine representation
\[ C(s\ddagger\xi) := \log Ee^{is \xi} = ias-\frac{b}{2}s^2 +\int\limits_\mathbb{R} (e^{is x}-1-is\tau(x))W(dx),\ s\in\mathbb{R},\] where $a\in\mathbb{R},\ b\geq0,$ and 
\begin{equation}\label{tau} \tau(x) :=
    \begin{cases}
      x, \ |x|\leq1,\\
      \sign(x), \ |x|>1,
    \end{cases}\
\end{equation} where $\sign(\cdot)$ is the signum function. The L\'evy measure $W(\cdot)$ is a Radon measure on $\mathbb{R}$ such that $W(\{0\})=0$ and 
\begin{equation}\label{eq:W_int} \int\limits_\mathbb{R}\min(1, x^2) W(dx) <\infty. \end{equation}  

The Blumental-Getoor index \cite{blumenthal1960some} of the L\'evy measure $W(\cdot)$ is defined as
\begin{equation*} \beta_{BG}:=\inf\left\{ \gamma\geq0:\int_{|x|\leq1}|x|^\gamma W(dx) <\infty\right\}.\end{equation*} The Blumental-Getoor index satisfies the condition $0\le \beta_{BG}\le 2,$ and for the L\'evy measure $W(\cdot)$ which index is $\beta_{BG},$ for $\alpha>\beta_{BG}$ it holds true that
\[ \int\limits_{|x|\leq1}|x|^{\alpha}W(dx)<\infty.\]

An infinitely divisible random variable $\xi$ is called self-decomposable if for each $c\in(0,1)$ there exists a characteristic function $\phi_c(\cdot)$ such that the characteristic function $\phi(\cdot)$ of $\xi$ can be represented as $\phi(s) = \phi(cs)\phi_c(s)$ for all $s\in\mathbb{R}.$ The equivalent definition  is that $\xi$ has a L\'evy measure $W(\cdot)$ of the form $W(dx) = w(x)dx, $ with $w(x)=|x|^{-1}\overline{w}(x),$ where the function $\overline{w}(x)$ increases on $(-\infty,0)$ and decreases on $(0,\infty).$ The class of self-decomposable distributions includes, for instance, Gamma, variance Gamma, inverse Gaussian, normal inverse Gaussian, and Student distributions.

 Let $\Lambda_0(\cdot)$ be an independently scattered scalar random measure on the $\sigma$-ring $\mathcal{S}$ of $\mathbb{R}^d$ in the sense that for every sequence of disjoint sets $\{A_n\}\in \mathcal{S}$ the random variables $\Lambda_0(A_n),\ n=1,2,...,$ are independent, and if $\cup_{n}A_n\in\mathcal{S}$ it holds
\[ \Lambda_0\left(\bigcup\limits_{n} A_n\right) = \sum\limits_{n}\Lambda_0(A_n) \ {\rm a.s.}\] Let $\Lambda_0(\cdot)$ also possesses infinitely divisible distributions such that for all  $A\in \mathcal{S}$ with finite Lebesgue measure $|A|,$ it holds
\begin{equation*}C(s\ddagger\Lambda_0(A))=im_0(A)s -\frac{m_1(A)}{2}s^2 +\int\limits_{\mathbb{R}}(e^{is x}-1-is \tau(x))Q(A, dx), \end{equation*}where $m_0(\cdot)$ is a signed measure, $m_1(\cdot)$ is a measure, and for a fixed $A$ the function $Q(A,\cdot)$ is a Radon measure on $\mathbb{R}.$ 

For a simple function $f(x)=\sum_{i=1}^n a_i \mathbb{1}_{A_i}(x), \ x \in\mathbb{R}^d,$ where $\mathbb{1}_{A_i}(\cdot)$ is an indicator function of the set $A_i\in\mathcal{S}, \ A_i\cap A_j=\emptyset,\ i\neq j,$ the Rajput-Rosinski integral \cite{rajput1989spectral} with respect to the independently scattered scalar infinitely divisible measure $\Lambda_0(\cdot)$ is defined~as
\[ \int\limits_A f(x) \Lambda_0(dx) := \sum\limits_{i=1}^n a_j \Lambda_0(A_i\cap A),\ A\in\mathcal{S}.\]  A function $f$ is $\Lambda_0$-integrable, if there exists a sequence of simple functions $\{ f_n \}$ such that $f_n\to f$ almost everywhere, and for every $A\in \sigma(\mathcal{S}),$ the sequence $\{ \int_A f_n(dx) \Lambda_0(dx) \}$ converges in probability. If $f$ is $\Lambda_0$-integrable, then its Rajput-Rosinski integral is defined~as
\[ \int\limits_Af(x)\Lambda_0(dx)  :=\lim\limits_{n\to\infty}\int\limits_Af_n(x)\Lambda_0(dx),\] where the limit is in the probability sense. The important property of the Rajput-Rosinski integrals is that under some weak conditions on the measure $\Lambda_0(\cdot),$ see \cite{rajput1989spectral} and Proposition \ref{prop:cum_func},
\[ \int\limits_{A}  f(\textit{\textbf{x}}) \Lambda_0(d\bm{x}) \in \overline{lin}\{\Lambda_0(A), A\in\mathcal{S}\}_{L_p},\] where the last denotes the closure in $L_p$ norm of the space spanned by the family of random variables $\Lambda_0(A), A\in\mathcal{S}.$

Continuous autoregressive fields were recently introduced in {{\rm \cite{brockwell2017continuous}}}, and this paper studies a new model based on their transformations. Let the cumulant function of the infinitely divisible independently scattered random measure $\Lambda_0(\cdot)$ allow the next factorisation
\begin{equation*} C(s\ddagger \Lambda_0(A)) = |A|\cdot \mathcal{K}(s),\ A\in\mathcal{S},\end{equation*} where $\mathcal{K}(\cdot)$ is a cumulant function of an infinitely divisible distribution and \begin{equation}\label{musigma}
E\Lambda_0(A) = \mu |A|, \ \ \ Var(\Lambda_0(A)) = \sigma^2|A|, \ \ \ \mu\in\mathbb{R}, \ \sigma^2\in\mathbb{R}^+.
\end{equation} It should be noted that the parameters $\mu$ and $\sigma^2$ are not the mean and variance of the Gaussian component of the measure $\Lambda_0(\cdot),$ they also depend on the L\'evy measure $W(\cdot).$

The scalar L\'evy sheet $L=\{L(\bm{t}),\ \bm{t}\in\mathbb{R}^d_+  \}$ associated with  the measure $\Lambda_0(\cdot)$ is defined~as 
\begin{equation}\label{Levyfield}L(\bm{t}):=\Lambda_0((\bm{0},\bm{t}]),\ \bm{t}\in \mathbb{R}^d_+,\end{equation} and the corresponding continuous autoregressive random fields is defined as follows.

\begin{theorem} {{\rm \cite{brockwell2017continuous}}}
Let $L(\cdot)$ be the second-order L\'evy sheet with parameters $\mu$ and $\sigma^2$ defined in~{\rm \eqref{musigma}}. Then, the isotropic random field 
\begin{equation}\label{carfield} S_\lambda(\bm{t}) := -\int\limits_{\mathbb{R}^d} \frac{1}{2\lambda} e^{-\lambda||\bm{t}-\bm{u}||}d{L(\bm{u})}, \ \lambda>0, \ \bf{t}\in\mathbb{R}^d,\end{equation} is called CAR(1) field. Its mean equals
\[ ES_\lambda(\bm{t}) = -\frac{\mu \pi^{d/2}\Gamma(d+1)}{2\lambda|\lambda|^d\Gamma(d/2+1)},\] the spectral density of $S_\lambda(\cdot)$ is
\begin{equation}\label{speccar} f(\bm{\omega}): = \frac{c_1^2 \sigma^2}{(||\bm{\omega}||^2+\lambda^2)^{d+1}} ,\ \bm{\omega}\in\mathbb{R}^d, \end{equation} and the covariance function of $S_\lambda(\cdot)$ is given by
\begin{equation} \label{mat_cov}\gamma(\bm{t}): = c_1^2 \sigma^2 \left( \frac{\pi}{2} \right)^{d/2} \frac{||\lambda \bm{t}||^{d/2+1}}{|\lambda|^{d+2} \Gamma(d+1)} K_{d/2+1}(||\lambda\bm{t}||),  \end{equation} where
\begin{equation*} c_1:=
    \begin{cases}
      -2^{d/2-1}\Gamma\left( \frac{d+1}{2} \right)/\sqrt{\pi},\ \textrm{if $d$ is odd}, \\
     -2^{-d/2}\Gamma(d)/\Gamma\left( \frac{d}{2} \right), \ \textrm{if $d$ is even},
    \end{cases}\
\end{equation*} and $K_{d/2+1}(\cdot)$ denotes the modified Bessel function of the second kind of order $d/2+1.$
\end{theorem}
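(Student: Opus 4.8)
The plan is to treat $S_\lambda$ as a moving-average (convolution-type) Rajput--Rosinski integral of the deterministic kernel $g(\bm v):=-\frac{1}{2\lambda}e^{-\lambda||\bm v||}$ against the second-order L\'evy sheet $L$, whose associated random measure $\Lambda_0$ has mean density $\mu$ and variance density $\sigma^2$ by~\eqref{musigma}. First I would check that the integral is well defined as a second-order field: since $g$ decays exponentially, $g\in L^1(\mathbb{R}^d)\cap L^2(\mathbb{R}^d)$ (the radial weight $r^{d-1}$ is dominated by $e^{-\lambda r}$), which guarantees both finite mean and finite variance and, via the Rajput--Rosinski integrability theory, that $S_\lambda(\bm t)\in\overline{lin}\{\Lambda_0(A),A\in\mathcal{S}\}_{L_2}$. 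Because $\Lambda_0$ is independently scattered with variance $\sigma^2\,Leb$, the second-order structure of $S_\lambda$ coincides with that of the deterministic convolution, so that $Cov(S_\lambda(\bm t),S_\lambda(\bm s))=\sigma^2\int_{\mathbb{R}^d} g(\bm t-\bm u)g(\bm s-\bm u)\,d\bm u$, an autocorrelation of $g$ depending on $\bm t-\bm s$ only.

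The mean follows at once from linearity: $ES_\lambda(\bm t)=\mu\int_{\mathbb{R}^d} g(\bm v)\,d\bm v=-\frac{\mu}{2\lambda}\int_{\mathbb{R}^d}e^{-\lambda||\bm v||}\,d\bm v$. Passing to polar coordinates and using $\int_0^\infty e^{-\lambda r}r^{d-1}\,dr=\Gamma(d)/\lambda^d$ together with the surface area $2\pi^{d/2}/\Gamma(d/2)$ of the unit sphere gives $ES_\lambda(\bm t)=-\mu\pi^{d/2}\Gamma(d)/(\lambda^{d+1}\Gamma(d/2))$; the stated form then results from the identity $\Gamma(d+1)/\Gamma(d/2+1)=2\,\Gamma(d)/\Gamma(d/2)$ and $|\lambda|=\lambda$ for $\lambda>0$.

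For the spectral density I would invoke the Wiener--Khinchin relation: the symmetric Fourier transform of the autocorrelation of $g$ equals $|\mathcal{F}_d[g](\bm\omega)|^2$, so that $f(\bm\omega)=\sigma^2|\mathcal{F}_d[g](\bm\omega)|^2$. The only genuine computation is the $d$-dimensional Fourier transform of the isotropic kernel $e^{-\lambda||\cdot||}$, which is the classical Poisson-type kernel $\mathcal{F}_d[e^{-\lambda||\cdot||}](\bm\omega)=\frac{2^{d/2}\Gamma((d+1)/2)\,\lambda}{\sqrt{\pi}\,(||\bm\omega||^2+\lambda^2)^{(d+1)/2}}$ (checked directly for $d=1$ and established in general by subordination). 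Multiplying by $-1/(2\lambda)$ and squaring produces $f(\bm\omega)=c_1^2\sigma^2(||\bm\omega||^2+\lambda^2)^{-(d+1)}$ with $c_1=-2^{d/2-1}\Gamma((d+1)/2)/\sqrt{\pi}$; the Legendre duplication formula $\Gamma(d/2)\Gamma((d+1)/2)=2^{1-d}\sqrt{\pi}\,\Gamma(d)$ rewrites this constant in the equivalent even-$d$ form $-2^{-d/2}\Gamma(d)/\Gamma(d/2)$, reconciling the two cases in the statement.

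Finally, the covariance is recovered as the inverse Fourier transform of $f$, namely $\gamma(\bm t)=c_1^2\sigma^2\int_{\mathbb{R}^d}e^{i\bm\omega\cdot\bm t}(||\bm\omega||^2+\lambda^2)^{-(d+1)}\,d\bm\omega$. I would evaluate this standard Mat\'ern integral by inserting the Gamma representation $(||\bm\omega||^2+\lambda^2)^{-(d+1)}=\frac{1}{\Gamma(d+1)}\int_0^\infty u^{d}e^{-u(||\bm\omega||^2+\lambda^2)}\,du$, interchanging the order of integration, performing the resulting Gaussian $\bm\omega$-integral, and recognising the remaining $u$-integral through
\[ \int_0^\infty u^{\nu-1}e^{-au-b/u}\,du=2\,(b/a)^{\nu/2}K_\nu(2\sqrt{ab}), \qquad \nu=\tfrac{d}{2}+1,\ a=\lambda^2,\ b=\tfrac{||\bm t||^2}{4}, \]
which yields exactly~\eqref{mat_cov}. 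The main obstacle is the bookkeeping of constants: establishing $\mathcal{F}_d[e^{-\lambda||\cdot||}]$ with the precise normalisation, and then tracking the powers of $2$, $\pi$, $\lambda$ and the Gamma factors through the Bessel integral so that both expressions for $c_1$ and the final closed form match exactly. The probabilistic content---existence and the moving-average covariance identity---is comparatively light once the $L^2$-integrability of the exponentially decaying kernel is noted.
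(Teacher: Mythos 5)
Your proposal is correct, and since the paper itself imports this theorem from Brockwell--Matsuda \cite{brockwell2017continuous} without an internal proof, the relevant comparison is with the Fourier-analytic moving-average route of the cited source, which is exactly what you follow: the same two identities you establish (the transform $\mathcal{F}_d[g_\lambda](\bm{\omega})=c_1(||\bm{\omega}||^2+\lambda^2)^{-(d+1)/2}$ and the Bessel-$K$ evaluation of the inverse transform of its square) are precisely the facts the paper later invokes in its proof of Theorem~\ref{Th3}, and your constant bookkeeping checks out (the mean via $\Gamma(d+1)/\Gamma(d/2+1)=2\Gamma(d)/\Gamma(d/2)$, the Gaussian--Gamma representation yielding \eqref{mat_cov} exactly, and $f=\sigma^2|\mathcal{F}_d[g_\lambda]|^2$ being consistent with $\gamma(\bm{t})=\int_{\mathbb{R}^d}e^{i(\bm{\omega},\bm{t})}f(\bm{\omega})\,d\bm{\omega}$ under the unitary convention). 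Your observation that the Legendre duplication formula makes the two displayed forms of $c_1$ coincide for every $d$ — so the odd/even case split in the statement is vacuous — is accurate and worth noting.
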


\begin{remark}
The integral in {\rm \eqref{carfield}} is interpreted as the Rajput-Rosinski integral with respect to the random measure $\Lambda_0(\cdot)$ corresponding to the L\'evy sheet $L(\cdot)$ from \eqref{Levyfield}. The notation $dL(\bm{u})$ is due to Koshnevisan and Nualart {\rm \cite{Nualart}}.
\end{remark}

\begin{remark}
The covariance function in {\rm \eqref{mat_cov}} belongs to the Mat\'ern class, which has a general form, see~{\rm \cite{stein2012interpolation}},
\[ \gamma_\nu(\bm{t}) :=  \sigma^2 ||a\bm{t}||^{\nu} K_\nu(||a\bm{t}||), \ a>0, \ \nu>0.\] As the function $K_\nu(x)$ decays exponentially when $x\to\infty,$ {\rm \cite[10.40.2]{DLMF}}, the Mat\'ern covariance is also exponentially bounded for large values of its argument.
\end{remark}

\section{Superpositions of continuous autoregressive fields} \label{sec:supcar}
In this section, we introduce superpositions of continuous autoregressive fields (supCAR fields), and study conditions for their existence, their marginal, finite-dimensional distributions, and dependence structures.

Consider a random field
\begin{equation}\label{supcar} X(\bm{t}) := -\int\limits_0^\infty\int\limits_{\mathbb{R}^d} \frac{1}{2\lambda} e^{-\lambda||\bm{t}-\bm{u}||}\Lambda(d\bm{u}, d\lambda),\ \bm{t}\in\mathbb{R}^d,\end{equation} where $\Lambda(\cdot)$ is an independently scattered scalar infinitely divisible random measure such that  for $A\in\mathcal{S}'$ its cumulant function  allows the following factorisation
\begin{equation}
\label{cum_supcar}
C(s \ddagger \Lambda(A)) = [\pi\times Leb](A) \cdot \mathcal{K}(s),\end{equation} where $\pi(\cdot)$ is a probability measure on $\mathbb{R}^+,$ and $\mathcal{K}(\cdot)$ is a cumulant function of some infinitely divisible distribution. In this paper, without loss of generality, we restrict our attention to the case when $a=0$ such that 
\begin{equation}\label{cum_supcar1}
\mathcal{K}(s) = - \frac{b}{2}s^2 + \int\limits_\mathbb{R}(e^{is x} - 1 - is\tau(x))W(dx).
\end{equation} The characteristic quadruple of the supCAR field \eqref{supcar} is $(0,b,W,\pi).$

\begin{remark}
For a fixed $\lambda,$ the inner integral in {\rm \eqref{supcar}} is a CAR(1) field $S_\lambda(\cdot)$. Thus, one can interpret the field {\rm \eqref{supcar}} as an infinite superposition of CAR(1) fields $S_\lambda(\cdot)$.
\end{remark}

The following result provides conditions for the existence of supCAR random fields. 

\begin{theorem}\label{th:ex1}
The supCAR random field with the integral representation {\rm \eqref{supcar}} is well-defined in the following cases:
\begin{itemize}
\item[{\rm(i)}] when $b>0,$  
\begin{equation}\label{bneq} \int\limits_0^\infty\frac{|\ln (\lambda)|^{d}}{\lambda^{d+2}}\pi(d\lambda)<\infty {\mbox \ \ and \ } \int\limits_{1\leq|x|<\infty}|x|(\ln|x|)^{d-1}W(dx)<\infty;\end{equation}
\item[{\rm(ii)}] when $b=0,$ 
\[  \int\limits_0^\infty\frac{|\ln (\lambda)|^{d-1}}{\lambda^{d+1}}\pi(d\lambda)<\infty, \ \ \ \int\limits_{1\leq|x|<\infty}|x|(\ln|x|)^{d-1}W(dx)<\infty,\] 
\begin{equation}\label{beq}
{}\hspace{-1cm}\int\limits_0^{1/2}\frac{|\ln (\lambda)|^{d-1}}{\lambda^{d+2}}\int\limits_{-2\lambda}^{2\lambda} x^2 W(dx) \pi(d\lambda) <\infty\ {\mbox and} \  \int\limits_0^{1/2}\frac{|\ln (\lambda)|^{d}}{\lambda^{d+2-k}}\int\limits_{2\lambda\leq |x|\leq1} |x|^{2-k} W(dx) \pi(d\lambda) <\infty,\ 
\end{equation} 
\end{itemize} for $k=1,2.$

\end{theorem}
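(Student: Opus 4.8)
The field $X(\bm t)$ in \eqref{supcar} is, by definition, the Rajput--Rosinski integral of the deterministic kernel $f(\bm u,\lambda)=-\frac{1}{2\lambda}\,e^{-\lambda\|\bm t-\bm u\|}$ against $\Lambda$; hence ``well-defined'' means precisely that $f$ is $\Lambda$-integrable. The plan is to verify $\Lambda$-integrability through the Rajput--Rosinski criterion \cite{rajput1989spectral}. Because the factorisation \eqref{cum_supcar} forces the control measure to be $\pi\times Leb$ and the local characteristics $(0,b,W)$ in \eqref{cum_supcar1} to be constant in $(\bm u,\lambda)$, that criterion reduces to the finiteness of three explicit integrals: a Gaussian term $b\int_0^\infty\!\int_{\mathbb R^d} f^2\,d\bm u\,\pi(d\lambda)$, a L\'evy term $\int_0^\infty\!\int_{\mathbb R^d}\!\int_{\mathbb R}\min(1,f^2x^2)\,W(dx)\,d\bm u\,\pi(d\lambda)$, and a shift term $\int_0^\infty\!\int_{\mathbb R^d}|U_0(f)|\,d\bm u\,\pi(d\lambda)$, where $U_0(v)=\int_{\mathbb R}(\tau(vx)-v\tau(x))\,W(dx)$ with $\tau$ from \eqref{tau}. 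Since the theorem only asserts sufficiency, it is enough to dominate each of these three quantities by a constant multiple of the integrals in \eqref{bneq} and \eqref{beq}, using \eqref{eq:W_int} for the pieces that are automatically finite.

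The computational engine is a single radial reduction of the $\bm u$-integral. Writing $g(\bm u,\lambda)=|f(\bm u,\lambda)|=\frac{1}{2\lambda}e^{-\lambda\|\bm t-\bm u\|}$, passing to the radius $r=\|\bm t-\bm u\|$ and substituting $y=g=\frac{1}{2\lambda}e^{-\lambda r}$, one obtains for every nonnegative measurable $\phi$
\[
\int_{\mathbb R^d}\phi\bigl(g(\bm u,\lambda)\bigr)\,d\bm u=\frac{\omega_{d-1}}{\lambda^{d}}\int_{0}^{1/(2\lambda)}\phi(y)\,\bigl|\ln(2\lambda y)\bigr|^{\,d-1}\,\frac{dy}{y},
\]
where $\omega_{d-1}$ is the surface measure of the unit sphere in $\mathbb R^d$. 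This identity is the origin of both the $\lambda^{-d}$ prefactor and the logarithmic weights $|\ln\lambda|^{\,\cdot}$ that pervade \eqref{bneq}--\eqref{beq}. Applying it with $\phi(y)=y^2$ shows that the Gaussian term equals a finite constant times $\int_0^\infty\lambda^{-(d+2)}\pi(d\lambda)$, which is controlled by the first condition in \eqref{bneq}.

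For the L\'evy and shift terms I insert the reduction with $\phi$ built from $\min(1,y^2x^2)$ and from $|U_0(-y)|$, and then interchange the order of integration by Fubini to bring $W(dx)$ outside. The $x$-axis is split at the $\lambda$-dependent threshold $2\lambda$ and at $1$ into three regimes. On $|x|\le 2\lambda$ the kernel never saturates the truncation, so $\min(1,y^2x^2)=y^2x^2$ throughout and the contribution is of the Gaussian type $x^2\!\int g^2\,d\bm u$, producing the first integral in \eqref{beq}. On $2\lambda\le|x|\le1$ the minimum switches at $y=1/|x|$; the saturated part (where the minimum equals $1$) contributes the volume of a ball of radius $\lambda^{-1}\ln(|x|/2\lambda)$ and yields the $k=2$ integral in \eqref{beq}, while the unsaturated part together with the shift term $U_0$ yields the $k=1$ integral. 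On $|x|>1$ the same saturation-volume estimate, combined with the bound $\ln(|x|/2\lambda)\le C(\ln|x|+|\ln\lambda|)$, produces the tail condition on $W$ in \eqref{bneq}--\eqref{beq} and the remaining $\pi$-integrals.

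The $b>0$ versus $b=0$ dichotomy enters at the moderate- and small-jump regimes. When $b>0$ the Gaussian term already dominates the unsaturated (small-$y$, hence small effective jump) contributions near the singularity of the CAR kernel as $\lambda\to0$, so these are absorbed and only the two conditions \eqref{bneq} remain. When $b=0$ there is no such cushion, and the small- and moderate-jump contributions must be controlled directly, which is exactly what forces the four conditions \eqref{beq}, with the two powers $|x|^{2-k}$, $k=1,2$, matching the quadratic (unsaturated) and the constant (saturated) pieces of $\min(1,f^2x^2)$ together with the shift. The main obstacle is precisely this bookkeeping: carrying out the Fubini interchanges, identifying the saturation ball at each $(x,\lambda)$, and verifying that the logarithmic weights generated by the radial reduction recombine with the factors $(\ln|x|)^{d-1}$ and $|\ln\lambda|^{d}$ so that the obtained upper bounds are dominated by the stated integrals. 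No single estimate is deep; the difficulty is keeping the three jump regimes, the shift term and the two truncation levels aligned, and these sufficient conditions need not be sharp.
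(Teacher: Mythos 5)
Your proposal is correct and follows essentially the same route as the paper's proof: the Rajput--Rosinski criterion with the three terms (Gaussian, the $\min(1,(yx)^2)$ L\'evy term, and the shift term $V_1$), the identical spherical-coordinate substitution $y=e^{-\lambda r}/(2\lambda)$ producing the weight $\lambda^{-d}|\ln(2\lambda y)|^{d-1}\,dy/y$, and the same splitting of jump sizes at $2\lambda$ and $1$, with the saturated region (a ball of radius $\lambda^{-1}\ln(|x|/(2\lambda))$) giving the $k=2$ condition, the truncation-mismatch shift pieces giving $k=1$, and the big-jump pieces giving the tail condition on $W(\cdot)$ after $(\ln|x|)^{d}\le |x|(\ln|x|)^{d-1}$. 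One cosmetic caveat: in the case $b>0$, what absorbs the jump contributions is not the Gaussian term of the criterion (which only requires $\int_0^\infty \lambda^{-(d+2)}\pi(d\lambda)<\infty$) but the log-weighted first integral in \eqref{bneq}, which dominates all conditions in \eqref{beq} via $\int_{|x|\le 1}x^2\,W(dx)<\infty$ and $|x|^{2-k}\lambda^{k}\le x^{2}$ on $\{|x|\ge 2\lambda\}$ --- exactly the paper's closing argument for case (i).
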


\begin{remark}
As will be shown, the conditions in {\rm \eqref{bneq}} are more restrictive than the conditions~{\rm \eqref{beq}}. The reason is that the parameter $b$ in \eqref{cum_supcar1} corresponds to the variances of the Gaussian components of the underlying CAR(1) fields $S_\lambda(\cdot)$ used in the superposition~\eqref{supcar}. If $b>0,$ controlling the finiteness of the variance of the Gaussian component of the supCAR field requires the additional condition.
\end{remark}

\begin{proof}
For the case (ii), the integral representation \eqref{supcar} is well-defined if, consult \cite[Proposition 2.7]{rajput1989spectral},
\begin{equation} \label{I1}
I_0:=\int\limits_0^\infty \int\limits_{\mathbb{R}^d} V_0\left(-\frac{1}{2\lambda}e^{-\lambda||\bm{y}||}\right)d\bm{y}\pi(d\lambda)<\infty
\end{equation} and
\begin{equation} \label{I2}
I_1:=\int\limits_0^\infty \int\limits_{\mathbb{R}^d} \left|V_1\left(-\frac{1}{2\lambda}e^{-\lambda||\bm{y}||}\right)\right|d\bm{y}\pi(d\lambda)<\infty, 
\end{equation} where $V_0(r):=\int_\mathbb{R}\min(1, (rx)^2)W(dx),$ $V_1(r):=\int_\mathbb{R}(\tau(rx)-r\tau(x))W(dx),$ $r\ge 0,$ and $\tau(\cdot)$ is defined in~\eqref{tau}. 

First, let us consider the integral in \eqref{I1}. Using the $d$-dimensional spherical coordinates and the change of variables $e^{-\lambda\rho}/(2\lambda)=r$, one gets 
\[I_0 = C\int\limits_0^\infty \int\limits_0^\infty\rho^{d-1}V_0\left(\frac{1}{2\lambda } e^{-\lambda\rho} \right) d\rho \pi(d\lambda)= C(-1)^{d+1} \int\limits_0^\infty \frac{1}{\lambda^d} \int\limits_0^{\frac{1}{2\lambda}} r^{-1}\left(\ln (2\lambda r)\right)^{d-1} V_0(r)dr \pi(d\lambda)\]
\[\leq C\int\limits_0^\infty \frac{1}{\lambda^d} \int\limits_0^{\frac{1}{2\lambda}} r^{-1}|\ln (2r^2)|^{d-1}V_0(r)dr \pi(d\lambda)+C\int\limits_0^\infty \frac{|\ln (2\lambda^2)|^{d-1}}{\lambda^d} \int\limits_0^{\frac{1}{2\lambda}} r^{-1}V_0(r)dr\pi(d\lambda)\]\[:=C(I_{0,1}+I_{0,2}).\] The integral $I_{0,1}$ can be represented as
\[I_{0,1} = C\int\limits_0^\infty \frac{1}{\lambda^d} \int\limits_0^{\frac{1}{2\lambda}} r|\ln (2r^2)|^{d-1}\int\limits_{|x|<r^{-1}}x^2 W(dx) dr \pi(d\lambda) \] \[ + C\int\limits_0^\infty \frac{1}{\lambda^d} \int\limits_0^{\frac{1}{2\lambda}} r^{-1}|\ln (2r^2)|^{d-1}\int\limits_{r^{-1}\leq |x|}W(dx) dr \pi(d\lambda):=C(I_{0,1,1}+I_{0,1,2}).\] One can estimate the integral $I_{0,1,1}$ as
\begin{eqnarray} I_{0,1,1} &\leq & C\Bigg(\int\limits_0^\frac{1}{2} \frac{1}{\lambda^d} \int\limits_0^{1} r|\ln (2r^2)|^{d-1}\int\limits_{|x|<r^{-1}}x^2 W(dx) dr \pi(d\lambda)\nonumber\\ & +& \int\limits_0^\frac{1}{2} \frac{|\ln (2\lambda^2)|^{d-1}}{\lambda^d} \int\limits_1^{\frac{1}{2\lambda}} r\int\limits_{|x|<r^{-1}}x^2 W(dx) dr \pi(d\lambda)\nonumber\\  & + &\int\limits_\frac{1}{2}^\infty \frac{1}{\lambda^d} \int\limits_0^{\frac{1}{2\lambda}} r|\ln (2r^2)|^{d-1}\int\limits_{|x|<r^{-1}}x^2 W(dx) dr \pi(d\lambda) \Bigg) := C(I_{0,1,1}^{(1)} + I_{0,1,1}^{(2)} + I_{0,1,1}^{(3)}).\nonumber\end{eqnarray}  The integral in $I_{0,1,1}^{(1)}$ can be represented as
\begin{equation}\label{Th2:tmp} \int\limits_{-1}^1x^2\int\limits_0^1r|\ln(2r^2)|^{d-1}drW(dx) + \int\limits_{1\leq |x| <\infty}x^2\int\limits_0^{\frac{1}{|x|}}r|\ln(2r^2)|^{d-1}drW(dx).\end{equation} The first integral in \eqref{Th2:tmp} is finite as $\int_{\mathbb{R}}\min(1,x^2)W(dx)<\infty.$ 

The change of variables $r=e^{-t/2}$ results in
\begin{equation}\label{th2:tmp2}
\int\limits_0^{\frac{1}{|x|}}r|\ln(2r^2)|^{d-1}dr\leq C\int\limits_0^{\frac{1}{|x|}}r dr + C\int\limits_0^{\frac{1}{|x|}}r|\ln(r)|^{d-1}dr = \frac{C}{|x|^2} + C\int\limits\limits_{\ln x^2}^\infty t^{d-1} e^{-t},\end{equation} Note that
\[ \int\limits_{\ln x^2}^\infty t^{d-1} e^{-t} dt = \Gamma(d,\ln x^2),\] where the function $\Gamma(\cdot,\cdot)$ is the incomplete Gamma function. As $\Gamma(d,\ln x^2)\sim \frac{1}{x^2}\left( \ln x^2 \right)^{d-1},$ by \eqref{th2:tmp2} the second integral in \eqref{Th2:tmp} is finite if
\[ \int\limits_{1\leq |x| <\infty} (\ln x^2)^{d-1} W(dx)<\infty.\] Therefore, \eqref{beq} gives us the finiteness of the integrals $I_{0,1,1}^{(1)}$. The same computations are valid for $I_{0,1,1}^{(3)}.$ Now, let us consider the integral $I_{0,1,1}^{(2)}$
\[ I_{0,1,1}^{(2)} = \int\limits_{0}^{\frac{1}{2}}\frac{|\ln(2\lambda^2)|^{d-1}}{\lambda^d} \int\limits_{-2\lambda}^{2\lambda}x^2\int\limits_{1}^{\frac{1}{2\lambda}} r drW(dx)\pi(d\lambda) + \int\limits_{0}^{\frac{1}{2}}\frac{|\ln(2\lambda^2)|^{d-1}}{\lambda^d} \int\limits_{2\lambda\leq|x|\leq1}x^2\int\limits_{1}^{\frac{1}{|x|}} r drW(dx)\pi(d\lambda)\]
\[\leq C \int\limits_{0}^{\frac{1}{2}} \frac{\left|\ln\left( 2\lambda^2 \right)\right|^{d-1}}{\lambda^{d+2}}\int\limits_{-2\lambda}^{2\lambda} x^2 W(dx)\pi(d\lambda)+\int\limits_{0}^{\frac{1}{2}} \frac{\left|\ln\left( 2\lambda^2 \right)\right|^{d-1}}{\lambda^d}\int\limits_{2\lambda\leq |x| \leq 1} W(dx)\pi(d\lambda).\] Thus, by \eqref{beq} the integral $I_{0,1,1}^{(2)}$ and, therefore, the integral $I_{0,1,1}$ are finite.

Let us study $I_{0,1,2}$
\begin{eqnarray} I_{0,1,2} & = &\int\limits_0^\frac{1}{2} \frac{1}{\lambda^d} \int\limits_0^{1} r^{-1}|\ln (2r^2)|^{d-1}\int\limits_{r^{-1} \leq |x|} W(dx) dr \pi(d\lambda) \nonumber\\ &+& \int\limits_0^\frac{1}{2} \frac{|\ln (2\lambda^2)|^{d-1}}{\lambda^d} \int\limits_1^{\frac{1}{2\lambda}} r^{-1}\int\limits_{r^{-1} \leq |x|} W(dx) dr \pi(d\lambda) \nonumber\\
& + &\int\limits_\frac{1}{2}^\infty \frac{1}{\lambda^d} \int\limits_0^{\frac{1}{2\lambda}} r^{-1}|\ln (2r^2)|^{d-1}\int\limits_{r^{-1} \leq |x|} W(dx) dr \pi(d\lambda) := I_{0,1,2}^{(1)} + I_{0,1,2}^{(2)} + I_{0,1,2}^{(3)}.\nonumber\end{eqnarray}  Note that
\[\int\limits_{0}^1r^{-1}|\ln (2r^2)|^{d-1}\int\limits_{r^{-1}\leq |x|} W(dx) dr = \int\limits_{1\leq|x|<\infty}\int\limits_{\frac{1}{|x|}}^1r^{-1}|\ln(2r^2)|^{d-1}drW(dx)\] 
\begin{equation} \leq \int\limits_{1 \leq |x| <\infty}|x|\left(\ln2+2|\ln x|\right)^{d-1}\left(1 - \frac{1}{|x|}\right)W(dx).\end{equation} The last integral is finite due to \eqref{beq}. Thus, the integrals $I_{0,1,2}^{(1)}$ and $I_{0,1,2}^{(3)}$ are finite.

Let us estimate $I_{0,1,2}^{(2)},$ it equals 
\[\int\limits_0^\frac{1}{2} \frac{|\ln (2\lambda^2)|^{d-1}}{\lambda^d}\int\limits_{2\lambda\leq |x| \leq 1} \int\limits_{\frac{1}{|x|}}^{\frac{1}{2\lambda}} r^{-1} dr W(dx)\pi(d\lambda) + \int\limits_0^\frac{1}{2} \frac{|\ln (2\lambda^2)|^{d-1}}{\lambda^d}\int\limits_{1\leq |x| < \infty} \int\limits_{1}^{\frac{1}{2\lambda}} r^{-1} dr W(dx) \pi(d\lambda)\]
\[\leq C\int\limits_0^\frac{1}{2} \frac{|\ln (2\lambda^2)|^{d}}{\lambda^d}\int\limits_{2\lambda\leq |x| \leq 1}W(dx)\pi(d\lambda) + C\int\limits_0^\frac{1}{2} \frac{|\ln (2\lambda^2)|^{d}}{\lambda^d}\pi(d\lambda),\] where the last bound follows from $\int\limits_{1\leq |x| < \infty} W(dx)<\infty.$ Thus, $I_{0,1,2}^{(2)}$ is finite by~\eqref{beq} and the integral $I_{0,1}$ is finite too.

The integral $I_{0,2}$ is equal
\[\int\limits_0^\infty \frac{|\ln (2\lambda^2)|^{d-1}}{\lambda^d} \int\limits_0^{\frac{1}{2\lambda}} r\int\limits_{|x|\leq r^{-1}}x^2 W(dx) dr \pi(d\lambda) + \int\limits_0^\infty \frac{|\ln (2\lambda^2)|^{d-1}}{\lambda^d} \int\limits_0^{\frac{1}{2\lambda}} r^{-1}\int\limits_{r^{-1}\leq|x|}W(dx) dr \pi(d\lambda)\]\[:=I_{0,2,1}+I_{0,2,2}.\] Rewrite the integral $I_{0,2,1}$ as 
\[ \int\limits_0^\frac{1}{2} \frac{|\ln (2\lambda^2)|^{d-1}}{\lambda^d} \int\limits_0^{1} r\int\limits_{|x|\leq r^{-1}}x^2 W(dx) dr \pi(d\lambda) + \int\limits_0^\frac{1}{2} \frac{|\ln (2\lambda^2)|^{d-1}}{\lambda^d} \int\limits_1^{\frac{1}{2\lambda}} r\int\limits_{|x|\leq r^{-1}}x^2 W(dx) dr \pi(d\lambda) \]
\[ + \int\limits_\frac{1}{2}^\infty \frac{|\ln (2\lambda^2)|^{d-1}}{\lambda^d} \int\limits_0^{\frac{1}{2\lambda}} r\int\limits_{|x|\leq r^{-1}}x^2 W(dx) dr \pi(d\lambda) := I_{0,2,1}^{(1)} + I_{0,2,1}^{(2)} + I_{0,2,1}^{(3)}.\] The following integral is finite as
\[ \int\limits_0^1r\int\limits_{|x|<r^{-1}}x^2W(dx)dr = \int\limits_{-1}^1x^2\int\limits_{0}^1 rdrW(dx) + \int\limits_{1\leq|x|<\infty} x^2 \int\limits_{0}^\frac{1}{|x|} r dr W(dx) < \infty.\] Thus, by \eqref{beq} the integrals $I_{0,2,1}^{(1)}$ and $I_{0,2,1}^{(3)}$ are finite. The integral $I_{0,2,1}^{(2)}$ is identical to the integral $I_{0,1,1}^{(2)}.$ Thus, the integral $I_{0,2,1}$ is finite.

Now, let us consider $I_{0,2,2}.$ One can represent it as
\[ \int\limits_0^\frac{1}{2} \frac{|\ln (2\lambda^2)|^{d-1}}{\lambda^d} \int\limits_0^{1} r^{-1}\int\limits_{r^{-1} \leq |x|} W(dx) dr \pi(d\lambda) + \int\limits_0^\frac{1}{2} \frac{|\ln (2\lambda^2)|^{d-1}}{\lambda^d} \int\limits_1^{\frac{1}{2\lambda}} r^{-1}\int\limits_{r^{-1} \leq |x|} W(dx) dr \pi(d\lambda) \]
\[ + \int\limits_\frac{1}{2}^\infty \frac{|\ln (2\lambda^2)|^{d-1}}{\lambda^d} \int\limits_0^{\frac{1}{2\lambda}} r^{-1}\int\limits_{r^{-1} \leq |x|} W(dx) dr \pi(d\lambda) := I_{0,2,2}^{(1)} + I_{0,2,2}^{(2)} + I_{0,2,2}^{(3)}.\] As 
\[ \int\limits_0^1r^{-1}\int\limits_{r^{-1}\leq|x|}W(dx)dr = \int\limits_{1\leq |x| <\infty} \int\limits_{\frac{1}{|x|}}^1 r^{-1} dr W(dx) = \int\limits_{1\leq |x| <\infty} \ln|x| W(dx) < \infty,\] by condition \eqref{beq}, the integrals $I_{0,2,2}^{(1)}$ and $I_{0,2,2}^{(3)}$ are finite. Because the integral $I_{0,2,2}^{(2)}$ is identical to the integral $I_{0,1,2}^{(2)},$ the integral $I_{0,2}$ is finite too, which verifies the finiteness of $I_0$ in \eqref{I1}.

Now, let us show the boundedness of the integral \eqref{I2}. Using the $d$-dimensional spherical coordinates and the change of variables $r=e^{-\lambda\rho}/(2\lambda)$, one gets 
\[I_1 \leq C\int\limits_0^\infty \int\limits_0^\infty\rho^{d-1}\left|V_1\left(\frac{1}{2\lambda } e^{-\lambda\rho} \right)\right| d\rho \pi(d\lambda) = C(-1)^d \int\limits_0^\infty \frac{1}{\lambda^d} \int\limits_0^{\frac{1}{2\lambda}} r^{-1}\left(\ln (2\lambda r)\right)^{d-1} |V_1(r)|dr \pi(d\lambda)\] \[\leq C\int\limits_0^\infty \frac{1}{\lambda^d} \int\limits_0^{\frac{1}{2\lambda}} r^{-1}|\ln (2r^2)|^{d-1}|V_1(r)|dr \pi(d\lambda)+C\int\limits_0^\infty \frac{|\ln (2\lambda^2)|^{d-1}}{\lambda^d} \int\limits_0^{\frac{1}{2\lambda}} r^{-1}|V_1(r)|dr\pi(d\lambda)\]\[:=C(I_{1,1}+I_{1,2}).\] The integral $I_{1,1}$ can be represented as the next sum of the integrals
\[I_{1,1} = \int\limits_0^\frac{1}{2} \frac{1}{\lambda^d} \int\limits_0^{1} r^{-1}|\ln (2r^2)|^{d-1}|V_1(r)| dr \pi(d\lambda) + \int\limits_0^\frac{1}{2} \frac{|\ln (2\lambda^2)|^{d-1}}{\lambda^d} \int\limits_1^{\frac{1}{2\lambda}} r^{-1} |V_1(r)| dr \pi(d\lambda) \]
\[ + \int\limits_\frac{1}{2}^\infty \frac{1}{\lambda^d} \int\limits_0^{\frac{1}{2\lambda}} r^{-1}|\ln (2r^2)|^{d-1} |V_1(r)| dr \pi(d\lambda) := I_{1,1,1} + I_{1,1,2} + I_{1,1,3}.\] The function $V_1(\cdot)$ can be represented as 
$V_1(r)=\sum_{i=1}^5V_1^{(i)}(r),$ see \cite{barndorff2001superposition}, where
\[ V_1^{(1)}(r) := r\int\limits_{1 < x \leq r^{-1}}(x-1)W(dx),\ V_1^{(2)}(r) := r\int\limits_{-r^{-1} < x \leq -1}(x+1)W(dx),\]
\[ V_1^{(3)}(r) := \int\limits_{r^{-1} < x \leq 1,}(1-rx)W(dx),\  V_1^{(4)}(r) := \int\limits_{-1 \leq x  < -r^{-1}}(-1-rx)W(dx),\]
\[ V_1^{(5)}(r) := (1-r)\int\limits_{\max\{1,r^{-1}\} < |x|}\sign(x)W(dx),\] and it is equal zero in other cases of the integration regions, or if the upper limits in the above integrals are smaller that the lower ones.

Therefore, the integrals $I_{1,1,1}$ and $I_{1,1,3}$ are finite if 
\begin{equation*} \int\limits_{0}^\infty \frac{\pi(d\lambda)}{\lambda^d} <\infty \ {\rm and} \  \int\limits_{0}^1 r^{-1}|\ln(2r^2)|^{d-1}|V_1^{(i)}(r)|dr <\infty, \ i=\overline{1,5}.\end{equation*} By \eqref{beq}, the first integral above is finite. Let us show that the second integral is finite too. First, for $i=1$ and $2,$ we get 
\[ \int\limits_{0}^1 r^{-1}|\ln(2r^2)|^{d-1}|V_1^{(i)}(r)|dr \leq \int\limits_{0}^1 |\ln(2r^2)|^{d-1} \int\limits_{1}^{r^{-1}}(x-1)W(dx)dr\] \[\leq \int\limits_0^1|\ln(2r^2)|^{d-1}dr\int\limits_{1}^\infty x W(dx)<+\infty.\] The first integral in the product above is finite as $|\ln(2r^2)|^{d-1}<r^{-\frac{1}{2}}$ in some neighbourhood of the origin, where the second integral is finite due to condition \eqref{beq}. 

The integrals involving $V_1^{(3)}(\cdot)$ and $V_1^{(4)}(\cdot)$ equal to $0,$ as $r^{-1}>1.$ For the case of $V_1^{(5)}(\cdot),$ we proceed as follows
\[ \int\limits_{0}^1 r^{-1}|\ln(2r^2)|^{d-1}|V_1^{(5)}(r)|dr \leq \int\limits_0^1 r^{-1}(1-r)|\ln(2r^2)|^{d-1}\int\limits_{r^{-1}<|x|}W(dx) dr \] \[= \int\limits_{1\leq |x| <\infty} \int\limits_{\frac{1}{|x|}}^1 r^{-1}(1-r)|\ln(2r^2)|^{d-1} dr W(dx) \leq C_1 \int\limits_{1\leq |x| <\infty} |x|(C+\ln|x|)^{d-1}W(dx) < \infty,\] which is due to \eqref{beq}. Thus, the integrals $I_{1,1,1}$ and $I_{1,1,3}$ are finite. 

Let us investigate the integral $I_{1,1,2}.$ To show its finiteness we will prove that 
\begin{equation}\label{th:ex_int_K} \int\limits_1^{\frac{1}{2\lambda}} r^{-1}|V_1^{(i)}(r)| dr<\infty, \ i=\overline{1,5}.\end{equation}
For the cases $i=1$ and $2,$ as $r^{-1}>1$ in \eqref{th:ex_int_K}, $V_1^{(1)}(r)$ and the corresponding integrals vanish. For the cases $i=3$ and $4,$ the following estimates hold true
\[ \int\limits_{1}^{\frac{1}{2\lambda}} r^{-1}|V_1^{(i)}(r)|dr \leq \int\limits_{1}^{\frac{1}{2\lambda}} r^{-1}\int\limits_{r^{-1} < x\leq 1} W(dx) dr + \int\limits_{1}^{\frac{1}{2\lambda}}\int\limits_{r^{-1} < x\leq 1} x W(dx) dr \]
\[ = \int\limits_{2\lambda}^{1} \int\limits_{\frac{1}{x}}^{\frac{1}{2\lambda}} r^{-1}dr W(dx) + \int\limits_{2\lambda}^{1} x \int\limits_{\frac{1}{x}}^{\frac{1}{2\lambda}}dr W(dx) \leq 2|\ln(2\lambda)|\int\limits_{2\lambda}^1W(dx) + \frac{1}{2\lambda}\int\limits_{2\lambda}^1 xW(dx) + \int\limits_{2\lambda}^1W(dx).\] For the case when $i=5,$ we obtain
\[ \int\limits_{1}^{\frac{1}{2\lambda}} r^{-1}|V_1^{(5)}(r)|dr \leq \int\limits_{1}^{\frac{1}{2\lambda}} r^{-1}|1-r| \int\limits_{\max\{1,r^{-1}\}<|x| } W(dx)dr\leq\frac{C}{\lambda},\] as $\int_{1\leq |x| <\infty}W(dx)<\infty.$ By the obtained estimates and  \eqref{beq} the integral $I_{1,1}$ is finite.

Let us investigate the integral $I_{1,2}$
\[I_{1,2}=\int\limits_0^\frac{1}{2} \frac{|\ln (2\lambda^2)|^{d-1}}{\lambda^d} \int\limits_0^{1} r^{-1}|V_1(r)| dr \pi(d\lambda) + \int\limits_0^\frac{1}{2} \frac{|\ln (2\lambda^2)|^{d-1}}{\lambda^d} \int\limits_1^{\frac{1}{2\lambda}} r^{-1} |V_1(r)| dr \pi(d\lambda) \]
\[ + \int\limits_\frac{1}{2}^\infty \frac{|\ln (2\lambda^2)|^{d-1}}{\lambda^d} \int\limits_0^{\frac{1}{2\lambda}} r^{-1} |V_1(r)| dr \pi(d\lambda) := I_{1,2,1} + I_{1,2,2} + I_{1,2,3}.\] The integrals $I_{1,2,1}$ and $I_{1,2,3}$ are finite if 
\[ \int\limits_{0}^\infty \frac{|\ln(2\lambda^2)|^{d-1}}{\lambda^d}\pi(d\lambda) <\infty, \ {\rm and} \  \int\limits_{0}^1 r^{-1}|V_1^{(i)}(r)|dr <\infty, \ i=\overline{1,5}.\] The second  integrals involving $V_1^{(i)}(\cdot)$ are finite because, as was shown before, for a sufficiently small $\delta>0,$ they are bounded by the finite integrals 
\[\int\limits_{0}^\delta r^{-1}|V_1^{(i)}(r)|dr\leq C \int\limits_{0}^\delta r^{-1}|\ln(2r^2)|^{d-1}|V_1^{(i)}(r)|dr<\infty, \ i=\overline{1,5}.\]
As the integral $I_{1,2,2}$ equals to the integral $I_{1,1,2},$ the integral $I_{1,2}$ is finite. This completes the proof for the case {\rm (ii)}.

For the case {\rm (i)} of $b>0,$ the integral \eqref{supcar} is well-defined \cite[Proposition 2.7]{rajput1989spectral} if the conditions \eqref{I1} and \eqref{I2} are satisfied and 
\begin{equation}\label{I3} \int\limits_0^\infty \int\limits_{\mathbb{R}^d} \frac{1}{\lambda^2}e^{-2\lambda||\bm{y}||}ds\pi(d\lambda)<\infty.\end{equation}
By using $d$-dimensional spherical coordinates, one gets that the integral in \eqref{I3} is bounded by
\[C \int\limits_0^\infty \int\limits_{0}^\infty \frac{r^{d-1}}{\lambda^2} e^{-2\lambda r} dr \pi(d\lambda) =C\int\limits_0^\infty \frac{1}{\lambda^{d+2}}\left(\int\limits_0^\infty r^{d-1} e^{-2r}dr\right)\pi(d\lambda) \leq C\int\limits_0^\infty\frac{\pi(d\lambda)}{\lambda^{d+2}}.\]
Thus, the integral \eqref{I3} is finite if the last integral above is finite, which is true due to the finiteness of the first integral in \eqref{bneq}.

Let us show that the conditions \eqref{I1} and \eqref{I2} are satisfied if the assumptions in \eqref{bneq} hold true. The assumptions in \eqref{bneq} imply the finiteness of the first two integrals in the case {\rm (ii)}. Thus, it is enough to show the finiteness of the remaining integrals in \eqref{beq}.

It follows from \eqref{eq:W_int} that for $\lambda\in[0,\frac{1}{2}]$ it holds $\int\limits_{-2\lambda}^{2\lambda}x^2 W(dx)<+\infty.$ Hence, by the first assumption in \eqref{bneq}, the first integral in \eqref{beq} is finite. 

Now, note that by \eqref{eq:W_int} for $\lambda\in[0,\frac{1}{2}]$ we have $\int\limits_{2\lambda\leq |x| \leq 1}x^2W(dx)<+\infty.$ Thus,  
\begin{equation*} \int\limits_0^{\frac{1}{2}} \frac{|\ln(\lambda)|^d}{\lambda^{d+2-k}}\int\limits_{2\lambda<|x|\leq1} |x|^{2-k}W(dx) \pi(d\lambda)\leq C \int\limits_0^{\frac{1}{2}} \frac{|\ln(\lambda)|^{d}}{\lambda^{d+2}}\int\limits_{2\lambda < |x| \leq 1}x^2W(dx)\pi(d\lambda)<+\infty. \end{equation*} \end{proof}

As follows from the proof of Theorem \ref{th:ex1}, it holds true under the following slightly more restrictive conditions for the case $\rm (ii).$ These conditions are easier to verify as they deal with each of the measures $\pi(\cdot)$ and $W(\cdot)$ separately.

\begin{corollary}
The supCAR random fields are well-defined if 
\[\int\limits_{0}^{\infty}\frac{|\ln(\lambda)|^d}{\lambda^{d+2}}\pi(d\lambda)<+\infty \ \ {\rm and} \ \ \int\limits_{1\leq |x| \leq \infty}|x|(\ln(|x|))^{d-1}W(dx)<+\infty.\]
\end{corollary}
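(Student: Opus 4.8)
The plan is to show that the two decoupled conditions of the corollary imply the four coupled integrability conditions appearing in case~(ii) of Theorem~\ref{th:ex1}, so that the conclusion follows immediately from that theorem. The second corollary condition is literally the second integral of case~(ii), so nothing is needed there. For the first integral of case~(ii), namely $\int_0^\infty |\ln\lambda|^{d-1}\lambda^{-(d+1)}\pi(d\lambda)$, I would split the range at $\lambda=1/2$: on $(0,1/2]$ one has $|\ln\lambda|\ge\ln 2>1/2$ and $\lambda^{-1}>2$, whence $|\ln\lambda|^{d-1}\lambda^{-(d+1)}\le C\,|\ln\lambda|^{d}\lambda^{-(d+2)}$, so this part is dominated by the first corollary condition; on $[1/2,\infty)$ the integrand is bounded and, since $\pi$ is a probability measure, the integral is automatically finite. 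Hence the first corollary condition controls the first case-(ii) integral.

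The core of the argument is the two remaining integrals in \eqref{beq}, and here I would exploit the L\'evy-measure property \eqref{eq:W_int}, which yields $\int_{|x|\le1}x^2\,W(dx)=:C<\infty$. For $\int_0^{1/2}|\ln\lambda|^{d-1}\lambda^{-(d+2)}\int_{-2\lambda}^{2\lambda}x^2\,W(dx)\,\pi(d\lambda)$ I would bound the inner integral by $\int_{|x|\le1}x^2\,W(dx)=C$ (valid since $2\lambda\le1$), reducing it to $C\int_0^{1/2}|\ln\lambda|^{d-1}\lambda^{-(d+2)}\pi(d\lambda)\le C\int_0^{1/2}|\ln\lambda|^{d}\lambda^{-(d+2)}\pi(d\lambda)$, finite by the first corollary condition. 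For the pair of integrals with $|x|^{2-k}$, $k=1,2$, the key elementary estimate is $|x|^{2-k}\le(2\lambda)^{-k}x^2$ whenever $|x|\ge2\lambda$; substituting it converts the weight $\lambda^{-(d+2-k)}|x|^{2-k}$ into $C\,\lambda^{-(d+2)}x^2$ and again bounds the inner integral by $\int_{|x|\le1}x^2\,W(dx)=C$, so each of these integrals is likewise dominated by $C\int_0^{1/2}|\ln\lambda|^{d}\lambda^{-(d+2)}\pi(d\lambda)<\infty$. These are precisely the reductions already carried out at the end of the proof of Theorem~\ref{th:ex1} for case~(i).

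I do not expect a serious obstacle here, since the essential content is that property \eqref{eq:W_int} decouples the measures $W$ and $\pi$ inside the joint integrals of \eqref{beq}, collapsing every coupled condition onto the single $\pi$-integral $\int_0^\infty|\ln\lambda|^{d}\lambda^{-(d+2)}\pi(d\lambda)$. The only points demanding a little care are the monotonicity comparison $|\ln\lambda|^{d-1}\le|\ln\lambda|^{d}$, which holds only once $|\ln\lambda|\ge1$ and therefore forces the splitting at $\lambda=1/2$, and the verification that the large-$\lambda$ contributions are harmless because $\pi$ is a probability measure and the relevant integrands are bounded on $[1/2,\infty)$.
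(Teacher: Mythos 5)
Your proposal is correct and follows essentially the same route as the paper: the corollary's conditions are exactly \eqref{bneq}, and the paper's justification is precisely the final part of the proof of Theorem \ref{th:ex1}, where \eqref{eq:W_int} bounds the inner $W$-integrals by constants and the estimate $|x|^{2-k}\leq (2\lambda)^{-k}x^2$ for $|x|\geq 2\lambda$ collapses both integrals in \eqref{beq} onto $C\int_0^{1/2}|\ln\lambda|^{d}\lambda^{-(d+2)}\pi(d\lambda)$, just as you do. Your additional splitting at $\lambda=1/2$ with the comparison $|\ln\lambda|^{d-1}\leq C|\ln\lambda|^{d}$ and boundedness on $[1/2,\infty)$ merely makes explicit the step the paper states without detail ("the assumptions in \eqref{bneq} imply the finiteness of the first two integrals in the case (ii)").
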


\begin{example}\label{ex:inverse_gauss}
Let $\pi(\cdot)$ be the Gamma measure $\Gamma(H,1), \ H>0,$ with the density 
\begin{equation}\label{gamma:density} p(\lambda) = \frac{\lambda^{H-1}e^{-\lambda}}{\Gamma(H)}\, \mathbb{1}_{(0,\infty)}(\lambda), \end{equation} and $W(\cdot)$ be the inverse Gaussian L\'evy measure with the density
\begin{equation}\label{eq:inv_gauss} \frac{dW(x)}{dx} = \sqrt{\frac{\alpha}{2\pi x^3}} \exp\left(-\frac{\alpha(x-\mu)^2}{2\mu^2x}  \right) \mathbb{1}_{(0,\infty)}(x) , \ \alpha,\mu>0.\end{equation} It is easy to see that if the Gaussian component is present, i.e. $b>0$, the integrals in the conditions \eqref{bneq} are finite if $H>d+2.$ If the Gaussian component is absent, i.e. $b=0$, the conditions in \eqref{beq} are satisfied if $H>d+1.$

The Blumental-Getoor index of the inverse Gaussian L\'evy measure \eqref{eq:inv_gauss} is $\beta_{BG}=0.$
\end{example}

In Section \ref{sect:limit}, to obtain limit theorems in a general form, we assume that the probability measure $\pi(\cdot)$ has a density that varies regularly in a neighbourhood of the origin. The corresponding existence conditions for the supCAR field \eqref{supcar} are as follows.
\begin{proposition}\label{prop:elambdast}
Let the probability measure $\pi(\cdot)$ have a density $p(\lambda)\sim l(1/\lambda)\lambda^{\alpha-1},$ when $\lambda\to0,$ where $l(\cdot)$ is a slowly varying at infinity function, and $W(\cdot)$ be a L\'evy measure with the Blumental-Getoor index $\beta_{BG}.$ Then, the integral representation  {\rm \eqref{supcar}} of the supCAR field is well-defined if  
\[\int\limits_{1\leq|x|<\infty}|x|(\ln|x|)^{d-1}W(dx)<\infty\] and one of the following conditions hold true:
\begin{itemize}
\item[\rm{(i)}] $b>0$ and $\alpha>d+2,$ or
\item[\rm{(ii)}] $b=0$ and $\alpha>\max\left(d+\beta_{BG}, d+1\right).$
\end{itemize}
\end{proposition}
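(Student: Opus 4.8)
The plan is to deduce Proposition~\ref{prop:elambdast} directly from Theorem~\ref{th:ex1} by translating the regular-variation hypothesis $p(\lambda)\sim l(1/\lambda)\lambda^{\alpha-1}$, $\lambda\to 0$, into the integral conditions \eqref{bneq} and \eqref{beq}. A recurring simplification is that every integral against $\pi(d\lambda)$ splits into a contribution on $[1/2,\infty)$ and one near the origin: on $[1/2,\infty)$ the relevant integrand is bounded (it vanishes as $\lambda\to\infty$ and is continuous), so that piece is automatically finite because $\pi(\cdot)$ is a probability measure; the near-origin piece is the only one requiring work, and there I would substitute the asymptotics of $p(\cdot)$.

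For case~(i) I would verify \eqref{bneq}. Its second integral is exactly the standing hypothesis. For the first integral, near the origin the integrand behaves like $|\ln\lambda|^{d}l(1/\lambda)\lambda^{\alpha-d-3}$; the substitution $u=1/\lambda$ turns this into $(\ln u)^{d}l(u)\,u^{-(\alpha-d-1)}$, integrable at infinity whenever $\alpha-d-1>1$, i.e. $\alpha>d+2$. Because the inequality is strict, the slowly varying factor $l(\cdot)$ and the power of the logarithm are harmless. Invoking Theorem~\ref{th:ex1}(i) then finishes this case.

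For case~(ii) I would check the three conditions of Theorem~\ref{th:ex1}(ii). The single-measure integral $\int_0^\infty|\ln\lambda|^{d-1}\lambda^{-(d+1)}\pi(d\lambda)$ is finite by the argument above, now needing only $\alpha>d+1$, and the remaining single-$W$ integral is again the standing hypothesis. The substance is \eqref{beq}, which I would attack by applying Fubini to interchange the $\lambda$- and $x$-integrations. The first integral in \eqref{beq} becomes $\int_{|x|\le 1}x^{2}\bigl(\int_{|x|/2}^{1/2}|\ln\lambda|^{d-1}\lambda^{-(d+2)}p(\lambda)\,d\lambda\bigr)W(dx)$, and the two integrals with $k=1,2$ become $\int_{|x|\le 1}|x|^{2-k}\bigl(\int_0^{|x|/2}|\ln\lambda|^{d}\lambda^{-(d+2-k)}p(\lambda)\,d\lambda\bigr)W(dx)$. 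By Karamata's theorem the inner $\lambda$-integrals are controlled by the relevant endpoint: for the first integral (when $\alpha<d+2$, the hard regime) the lower endpoint dominates and the inner integral is of order $|x|^{\alpha-d-2}$, while for $k=1,2$ the upper endpoint dominates and the inner integrals are of orders $|x|^{\alpha-d-1}$ and $|x|^{\alpha-d}$, respectively (all up to slowly varying and logarithmic factors, using that $l(2/|x|)\sim l(1/|x|)$). Multiplying by the prefactors $x^{2}$, $|x|$ and $1$ then produces, in every one of the three cases, the same single $W$-integral $\int_{|x|\le 1}|x|^{\alpha-d}\,|\ln|x||^{d}\,l(1/|x|)\,W(dx)$.

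Finiteness of this last integral is where the Blumental--Getoor index enters, and this coordination of the Karamata asymptotics with the definition of $\beta_{BG}$ is the step I expect to be the main obstacle. Since $\alpha-d>\beta_{BG}$ one can pick $\gamma$ with $\beta_{BG}<\gamma<\alpha-d$; then $|x|^{\alpha-d}|\ln|x||^{d}l(1/|x|)\le C|x|^{\gamma}$ for all sufficiently small $|x|$, because $|x|^{\alpha-d-\gamma}|\ln|x||^{d}l(1/|x|)\to 0$. Hence the integral is bounded by $C\int_{|x|\le 1}|x|^{\gamma}W(dx)<\infty$, finite by the very definition of $\beta_{BG}$. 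The remaining regime $\alpha\ge d+2$ of the first integral is easier: there the inner integral is bounded and the outer one is dominated by $C\int_{|x|\le1}x^2W(dx)$, finite by \eqref{eq:W_int}. Collecting the requirements, convergence at the origin of the inner integrals forces $\alpha>d+1$ while the $W$-integral forces $\alpha>d+\beta_{BG}$, so together they give $\alpha>\max(d+\beta_{BG},d+1)$, which is exactly condition~(ii). Theorem~\ref{th:ex1}(ii) then yields the claim.
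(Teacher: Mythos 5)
Your proposal is correct, and for the substantive part --- the two-measure conditions \eqref{beq} --- it takes a genuinely different route from the paper's proof. The paper keeps the $\lambda$-integration on the outside: it absorbs powers of $|x|$ into powers of $\lambda$ (e.g.\ writing $x^{2}=|x|^{2-\beta_{BG}}|x|^{\beta_{BG}}$ and using $|x|\leq 2\lambda$ on the inner domain), splits into the cases $\beta_{BG}\in(0,1]$, $\beta_{BG}\in(1,2]$ and $k=1,2$, and then uses the slow-variation bound $l(1/\lambda)\leq \lambda^{-\varepsilon}$ to reduce each term to an elementary power integral in $\lambda$. You instead apply Tonelli to swap the order of integration, evaluate the inner $\lambda$-integrals by Karamata's theorem, and collapse all three conditions in \eqref{beq} to the single integral $\int_{|x|\leq 1}|x|^{\alpha-d}\,|\ln|x||^{d}\,l(1/|x|)\,W(dx)$, which you then control by choosing $\gamma\in(\beta_{BG},\alpha-d)$. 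This is more unified, and in one respect more careful than the paper: the paper's final estimates implicitly treat $\int_{-2\lambda}^{2\lambda}|x|^{\beta_{BG}}W(dx)$ as uniformly bounded, i.e.\ they use finiteness of the small-jump $W$-moment at the index $\beta_{BG}$ itself, which the definition of the Blumenthal--Getoor index does not guarantee; your explicit choice of an exponent strictly above $\beta_{BG}$ exploits the strict inequality $\alpha>d+\beta_{BG}$ exactly where it is needed. Your treatment of case (i) and of the single-measure integrals matches the paper's.

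One boundary slip worth fixing: for the first integral in \eqref{beq} you assert that in the regime $\alpha\geq d+2$ the inner integral $\int_{|x|/2}^{1/2}|\ln\lambda|^{d-1}\lambda^{-(d+2)}p(\lambda)\,d\lambda$ is bounded. At $\alpha=d+2$ exactly it is not: it diverges slowly as $|x|\to 0$, like $|\ln|x||^{d}$ times a slowly varying factor. The gap is harmless, since in that case $\alpha>d+\beta_{BG}$ forces $\beta_{BG}<2$, and your own $\gamma$-trick with $\gamma\in(\beta_{BG},2)$ applied to $x^{2}|\ln|x||^{d}l(1/|x|)$ disposes of it --- in other words, the unified bound you derive in the regime $\alpha<d+2$ in fact holds for all admissible $\alpha$ and should simply be used throughout, rather than invoking boundedness of the inner integral.
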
 

\begin{proof} Let us check the conditions {\rm (ii)} of Theorem \ref{th:ex1}. For any $\delta>0,$ it holds
\[\int\limits_{0}^\infty\frac{|\ln(\lambda)|^{d-1}}{\lambda^{d+1}}\pi(d\lambda) = \int\limits_{0}^\delta \frac{|\ln(\lambda)|^{d-1}}{\lambda^{d+1}} \pi(d\lambda)+\int\limits_\delta^\infty \frac{|\ln(\lambda)|^{d-1}}{\lambda^{d+1}} \pi(d\lambda).\]
As $\pi(\cdot)$ is a probability measure, the second integral in the above expression is, obviously, finite. As $l(\cdot)$ is a slowly varying function, for every $\varepsilon>0,$ it holds $l(1/\lambda)\leq 1/\lambda^{\varepsilon},$ when $\lambda\to0.$ Thus, one can bound the first integral  as
\[\int\limits_{0}^\delta \frac{|\ln(\lambda)|^{d-1}}{\lambda^{d+1}} \pi(d\lambda) \leq C \int\limits_0^{\delta}\frac{|\ln(\lambda)|^{d-1}l(1/\lambda)}{\lambda^{d+2-\alpha}}d\lambda \leq C \int\limits_0^{\delta}\frac{d\lambda}{\lambda^{d+2-\alpha+2\varepsilon}}.\] The upper bound is finite if $\alpha>d+1$ and $\varepsilon$ is selected as $2\varepsilon<\alpha-d-1.$

Now, let us show the finiteness of the first integral in \eqref{beq}. It can be estimated as 
\[\int\limits_0^{1/2}\frac{|\ln (\lambda)|^{d-1}}{\lambda^{d+2}}\int\limits_{-2\lambda}^{2\lambda} x^{2-\beta_{BG}+\beta_{BG}} W(dx) \pi(d\lambda)\leq \int\limits_0^{1/2}\frac{|\ln (\lambda)|^{d-1}}{\lambda^{d+\beta_{BG}}}\int\limits_{-2\lambda}^{2\lambda} x^{\beta_{BG}} W(dx) \pi(d\lambda) \] \[
\leq C \int\limits_0^{1/2}\frac{|\ln (\lambda)|^{d-1}l\left(1/\lambda\right)}{\lambda^{d+1+\beta_{BG}-\alpha}}\int\limits_{-2\lambda}^{2\lambda} x^{\beta_{BG}} W(dx) d\lambda \leq C\int\limits_0^{1/2}\frac{d\lambda}{\lambda^{d+1+\beta_{BG} -\alpha+2\varepsilon}},\]
where the last integral is finite if $\alpha>d+\beta_{BG}$ as $\varepsilon$ can be chosen sufficiently small.

Let us proceed with the second integral in \eqref{beq}. If $\beta_{BG}\in(0,1],$ and $k=1,$ it holds
\[ \int\limits_0^{1/2}\frac{|\ln (\lambda)|^{d}}{\lambda^{d+1}}\int\limits_{2\lambda\leq |x|\leq1} |x| W(dx)\pi(d\lambda)\leq \int\limits_0^{1/2}\frac{|\ln (\lambda)|^{d}}{\lambda^{d+1}}\int\limits_{2\lambda\leq |x|\leq1} |x|^{\beta_{BG}} W(dx)\pi(d\lambda)\]\[ \leq C \int\limits_0^{1/2}\frac{1}{\lambda^{d+2-\alpha+2\varepsilon}}d\lambda,\] where the last integral is finite if $\alpha>d+1$. For $\beta_{BG}\in(0,1],$ and $k=2,$ it holds 
\[ \int\limits_0^{1/2}\frac{|\ln (\lambda)|^{d}}{\lambda^{d}}\int\limits_{2\lambda\leq |x|\leq1} W(dx)\pi(d\lambda) \leq \int\limits_0^{1/2}\frac{|\ln (\lambda)|^{d}}{\lambda^{d+\beta_{BG}}}\int\limits_{2\lambda\leq |x|\leq1}|x|^{\beta_{BG}} W(dx)\pi(d\lambda)\]\[ \leq C \int\limits_0^{1/2}\frac{1}{\lambda^{d+1+\beta_{BG}-\alpha+2\varepsilon}}d\lambda,\] where the last integral is finite if $\alpha>d+\beta_{BG}.$ For the case $\beta_{BG}\in(1,2],$ it holds
\[ \int\limits_{0}^{1/2}\frac{|\ln(\lambda)|^d}{\lambda^{d+2-k}}\int\limits_{2\lambda\leq|x|<1}|x|^{2-k}W(dx)\pi(d\lambda) = \int\limits_{0}^{1/2}\frac{|\ln(\lambda)|^d}{\lambda^{d+\beta_{BG}}}\int\limits_{2\lambda\leq|x|<1}\lambda^{k-2+\beta_{BG}}|x|^{2-k}W(dx)\pi(d\lambda)\]
\[\leq \int\limits_{0}^{1/2}\frac{|\ln(\lambda)|^d l(1/\lambda)}{\lambda^{d+\beta_{BG}+1-\alpha}}\int\limits_{2\lambda\leq|x|<1}|x|^{\beta_{BG}}W(dx)d\lambda \leq   C \int\limits_{0}^{1/2}\frac{d\lambda}{\lambda^{d+1+\beta_{BG}-\alpha}+2\varepsilon},\] where the last integral is finite if $\alpha>d+\beta_{BG}.$

The first integral in the case {\rm (i)} is treated analogously to the first integral in {\rm (ii)}. \end{proof}

A key tool for the analysis of supCAR fields is the following result on their marginal distributions and joint cumulant functions.

\begin{proposition}\label{prop:cum_func}
Let the supCAR random field $X(\bm{t}),$ $\bm{t}\in\mathbb{R}^d,$ with the characteristic quadruple $(0,b,W,\pi)$ satisfy the conditions of Theorem {\rm \ref{th:ex1}}. Then, $X(\cdot)$ is strongly homogeneous and isotropic, for each $\bm{t}\in\mathbb{R}^d$ it holds
\begin{equation}\label{prop:cum_func_mom}
EX(\bm{t}) = -\frac{i \pi^{\frac{d}{2}}\mathcal{K}'(0)}{\Gamma(\frac{d}{2})}\int\limits_0^\infty\frac{\pi(d\lambda)}{\lambda^{d+1}}, \ \ \ Var X(\bm{t}) = -\frac{\pi^{\frac{d}{2}}\mathcal{K}''(0)}{2^{d+2}\Gamma(\frac{d}{2})}\int\limits_0^\infty\frac{\pi(d\lambda)}{\lambda^{d+2}},
\end{equation} its marginal cumulant function has the following representation
\begin{equation}\label{eq:supcar_cum}
C(s\ddagger X(\bm{t})) = \int\limits_0^\infty\int\limits_{\mathbb{R}^d}\mathcal{K}\left( -\frac{s}{2\lambda}e^{-\lambda||\bm{u}||}  \right)d\bm{u}\pi(d\lambda), \ 
\end{equation} and its joint cumulant function is given by
\[C(s_1,...,s_k\ddagger X(\bm{t_1}),...,X(\bm{t_k})):=\log E e^{i(s_1 X(\bm{t}_1)+...+s_k X(\bm{t}_k))}\]
\begin{equation}\label{eq:supcar_cumj} 
=\int\limits_0^\infty\int\limits_{\mathbb{R}^d}\mathcal{K}\left( -\sum_{i=1}^k\frac{s_i}{2\lambda}e^{-\lambda||\bm{t}_i-\bm{u}||}  \right)d\bm{u}\pi(d\lambda),\ s_i\in\mathbb{R}, \  \bm{t}_i\in\mathbb{R}^d, \ i=\overline{1,k}, \ k\in\mathbb{N}.
\end{equation} Moreover, if there exists $p\geq1$ such that $E\Lambda(A)=0$ and $E|\Lambda(A)|^p<\infty$ for all $A\in\mathcal{S}',$ $0<|A|<\infty,$ then for all $\bm{t}\in\mathbb{R}^d$ it holds $ X(\bm{t}) \in  \overline{lin}\{\Lambda(A), A\in\mathcal{S'}\}_{L_p}.$
\end{proposition}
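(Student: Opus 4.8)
The plan is to derive the joint cumulant formula \eqref{eq:supcar_cumj} first, since every other assertion follows from it. The starting observation is that the linear combination $\sum_{i=1}^k s_i X(\bm{t}_i)$ is itself a single Rajput--Rosinski integral, namely $\int_0^\infty\int_{\mathbb{R}^d} g(\bm{u},\lambda)\,\Lambda(d\bm{u},d\lambda)$ with integrand $g(\bm{u},\lambda):=-\sum_{i=1}^k \frac{s_i}{2\lambda}e^{-\lambda\|\bm{t}_i-\bm{u}\|}$. Each summand is $\Lambda$-integrable by Theorem~\ref{th:ex1} (its kernel is the $k=1$, $s_i=1$ instance), and a finite linear combination of $\Lambda$-integrable functions is again $\Lambda$-integrable, so $g$ is $\Lambda$-integrable. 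I would then apply the Rajput--Rosinski cumulant-transform theorem \cite{rajput1989spectral}: combined with the factorisation \eqref{cum_supcar}, $C(\theta\ddagger\Lambda(A))=[\pi\times Leb](A)\,\mathcal{K}(\theta)$, it gives $C\bigl(1\ddagger\int g\,d\Lambda\bigr)=\int_0^\infty\int_{\mathbb{R}^d}\mathcal{K}\bigl(g(\bm{u},\lambda)\bigr)\,d\bm{u}\,\pi(d\lambda)$, which is exactly \eqref{eq:supcar_cumj}. Setting $k=1$ recovers the marginal formula \eqref{eq:supcar_cum}.

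Strong homogeneity and isotropy are then read off \eqref{eq:supcar_cumj} by changes of variables. For a shift $\bm{h}\in\mathbb{R}^d$, replacing each $\bm{t}_i$ by $\bm{t}_i+\bm{h}$ and substituting $\bm{u}\mapsto\bm{u}+\bm{h}$ leaves the inner integral unchanged, because $Leb$ is translation invariant; hence the joint cumulant function, and therefore the joint law of $(X(\bm{t}_1+\bm{h}),\dots,X(\bm{t}_k+\bm{h}))$, does not depend on $\bm{h}$. For a rotation $R$, substituting $\bm{u}\mapsto R\bm{u}$ and using $\|R\bm{t}_i-R\bm{u}\|=\|\bm{t}_i-\bm{u}\|$ together with the rotation invariance of $Leb$ shows that the joint law is invariant under simultaneous rotation of the arguments. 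Since this holds for arbitrary $k$ and $\bm{t}_1,\dots,\bm{t}_k$, the field is strongly homogeneous and isotropic.

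The moment formulas \eqref{prop:cum_func_mom} follow by differentiating \eqref{eq:supcar_cum} at the origin, using $EX(\bm{t})=-i\,\partial_s C(s\ddagger X(\bm{t}))\big|_{s=0}$ and $\mathrm{Var}\,X(\bm{t})=-\partial_s^2 C(s\ddagger X(\bm{t}))\big|_{s=0}$. By the chain rule, $\partial_s\mathcal{K}\bigl(-\tfrac{s}{2\lambda}e^{-\lambda\|\bm{u}\|}\bigr)\big|_{s=0}=-\tfrac{1}{2\lambda}e^{-\lambda\|\bm{u}\|}\mathcal{K}'(0)$, and, because the inner function is linear in $s$, the second derivative produces $\tfrac{1}{4\lambda^2}e^{-2\lambda\|\bm{u}\|}\mathcal{K}''(0)$. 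I would then evaluate the spatial integrals in $d$-dimensional spherical coordinates, reducing them to $\int_0^\infty\rho^{d-1}e^{-c\lambda\rho}\,d\rho=\Gamma(d)/(c\lambda)^d$ with $c=1,2$, and integrate the resulting powers $\lambda^{-(d+1)}$ and $\lambda^{-(d+2)}$ against $\pi$. This step is the main obstacle: the interchange of $\partial_s$ with the double integral must be justified rigorously, which requires producing, uniformly for $s$ in a neighbourhood of $0$, an integrable dominating function for the first and second $s$-derivatives, controlling $\mathcal{K}'$ and $\mathcal{K}''$ near the origin through the truncated moments of $W$ and combining this with the $\lambda$- and $\bm{u}$-decay already exploited in Theorem~\ref{th:ex1}; the existence of $\mathcal{K}'(0)$ and $\mathcal{K}''(0)$ corresponds to the first- and second-moment conditions on $W$.

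The final claim is an application of the structure of the Rajput--Rosinski integral. The integrand $g$ is an almost-everywhere limit of simple functions $f_n$, and each $\int f_n\,d\Lambda$ is by definition a finite linear combination of values $\Lambda(A)$, hence an element of $\overline{lin}\{\Lambda(A),A\in\mathcal{S'}\}_{L_p}$. Under the hypotheses $E\Lambda(A)=0$ and $E|\Lambda(A)|^p<\infty$ for all $A$ with $0<|A|<\infty$, one upgrades the defining convergence $\int f_n\,d\Lambda\to X(\bm{t})$ from convergence in probability to convergence in $L_p$ by invoking the corresponding moment result of \cite{rajput1989spectral}; since $\overline{lin}\{\Lambda(A)\}_{L_p}$ is closed, the limit $X(\bm{t})$ belongs to it. The secondary technical point here is precisely this passage from convergence in probability to $L_p$ convergence, which is where the centering $E\Lambda(A)=0$ and the $p$-th moment bound are essential.
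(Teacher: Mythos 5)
Your proposal is correct and follows essentially the same route as the paper's proof: both reduce everything to the joint cumulant formula for the single Rajput--Rosinski integral with kernel $-\sum_{i=1}^k\frac{s_i}{2\lambda}e^{-\lambda\|\bm{t}_i-\bm{u}\|}$ (you invoke the Rajput--Rosinski cumulant-transform theorem where the paper rederives it explicitly via simple-function approximation and convergence in probability), establish strong homogeneity and isotropy by the same changes of variables, obtain the moment formulas by differentiating the cumulant function at $s=0$ and evaluating in $d$-dimensional spherical coordinates, and settle the $L_p$-membership claim by the same appeal to \cite[Proposition 3.6]{rajput1989spectral}. The only substantive difference is cosmetic and in your favour: you explicitly flag the need to justify interchanging $\partial_s$ with the double integral in the moment computation, a step the paper performs without comment.
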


\begin{remark}
It follows from the expression for moments in terms of cumulants {\rm \cite[Proposition 3.3.1]{peccati2011wiener}} and representation \eqref{cum_supcar}, that for all $A\in\mathcal{S}',$ $0<|A|<\infty,$ $E\Lambda(A)=0$ if and only if $\mathcal{K}'(0)=0$ and, the finiteness of moments, $E|\Lambda(A)|^p<\infty,\ p\in\mathbb{N},$ is equivalent to $\mathcal{K}^{(m)}(0)<\infty$ for all positive integers $m\leq p.$
\end{remark}

\begin{remark}
By this result, the supCAR random field $X(\cdot)$ retains some properties of the random measure $\Lambda(\cdot).$ Indeed, if $\Lambda(\cdot)$ is $\alpha$-stable or self-decomposable, marginal distributions of $X(\cdot)$ belong to the same class of distributions.
\end{remark}

\begin{proof}
Without loss of generality, let us put $\bm{t}=0.$ Let $\{f_n(\bm{u},\lambda)\}, \bm{u}\in\mathbb{R}^d,\ \lambda\in\mathbb{R}^+,$ be a sequence of simple functions 
\[  f_n(\bm{u},\lambda) : = \sum_{i=1}^{m_n} f(\bm{u}_i^{(n)},\lambda_i^{(n)})\mathbb{1}_{A_i^{(n)}}(\bm{u},\lambda), \ (\bm{u}_i^{(n)},\lambda_i^{(n)})\in A_i^{(n)}, \ A_i^{(n)}\in\mathcal{S}',\]
 that converges to the function $f(\bm{u},\lambda)=-\frac{1}{2\lambda}e^{-\lambda||\bm{u}||}$ almost everywhere, and the following convergence holds in probability
\begin{equation}\label{eq:prob} \int\limits_0^\infty \int\limits_{\mathbb{R}^d}f_n(\bm{u},\lambda) \Lambda(d\bm{u}, d\lambda) \to -\int\limits_0^\infty \int\limits_{\mathbb{R}^d}\frac{1}{2\lambda}e^{-\lambda||\bm{u}||} \Lambda(d\bm{u}, d\lambda).\end{equation} Such a sequence of functions exists due to the definition of Rajput-Rosinski integrals and because the last integral is well-defined by Theorem~\ref{th:ex1}.

Let us consider
\[  E\exp\left\{is \int\limits_0^\infty \int\limits_{\mathbb{R}^d}f_n(\bm{u},\lambda) \Lambda(d\bm{u}, d\lambda)\right\} =  E\exp\left\{  is \sum_{i=1}^{m_n} f(\bm{u}_i^{(n)}, \lambda_i^{(n)}) \Lambda(A_i^{(n)})  \right\} \]  \[= \prod_{i=1}^{m_n} E \exp\left\{  is f(\bm{u}_i^{(n)},\lambda_i^{(n)}) \Lambda(A_i^{(n)})  \right\} = \prod_{i=1}^{m_n} \exp\left\{ C(s f(\bm{u}_i^{(n)},\lambda_i^{(n)}) \ddagger \Lambda(A_i^{(n)}))  \right\}.\] By \eqref{cum_supcar} the product equals to
\[  \prod_{i=1}^{m_n} \exp\left\{ \mathcal{K}(sf(\bm{u}_i^{(n)},\lambda_i^{(n)})) [\pi\times Leb](A_i^{(n)}) \right\} \to  \exp\left\{ \int\limits_{0}^{\infty}\int\limits_{\mathbb{R}^d}  \mathcal{K}\left(-\frac{s}{2\lambda}e^{-\lambda||\bm{u}||}\right)d\bm{u}\pi(d\lambda)\right\},\] when $n\to\infty$. Using the convergence \eqref{eq:prob}, one obtains the representation \eqref{eq:supcar_cum}.

Now, let us obtain the joint cumulant function. As
\[  s_1 X(\bm{t}_1)+...+s_k X(\bm{t}_k) = - \int\limits_0^\infty\int\limits_{\mathbb{R}^d}\sum_{i=1}^{k}\frac{s_i}{2\lambda}e^{-\lambda||\bm{t}_i-\bm{u}||}\Lambda(d\bm{u},d\lambda),\]
the formula \eqref{eq:supcar_cumj} can be obtained by following the same steps as in the marginal cumulant function case. 

To prove the strong homogeneity of the supCAR field $X(\cdot),$ let us consider its joint cumulant function and use the change of variables $\widetilde{\bm{u}} = \bm{h} - \bm{u}$
\[C(s_1,...,s_k\ddagger X(\bm{t_1}+\bm{h}),...,X(\bm{t_k}+\bm{h})) = \int\limits_0^\infty\int\limits_{\mathbb{R}^d}\mathcal{K}\left( -\sum_{i=1}^k\frac{s_i}{2\lambda}e^{-\lambda||\bm{t}_i+\bm{h}-\bm{u}||}  \right)d\bm{u}\pi(d\lambda)\]
\[=\int\limits_0^\infty\int\limits_{\mathbb{R}^d}\mathcal{K}\left( -\sum_{i=1}^k\frac{s_i}{2\lambda}e^{-\lambda||\bm{t}_i-\widetilde{\bm{u}}||}  \right)d\widetilde{\bm{u}}\pi(d\lambda)=C(s_1,...,s_k\ddagger X(\bm{t_1}),...,X(\bm{t_k})). \] Analogously, one proves the isotropy
\[C(s_1,...,s_k\ddagger X(B\bm{t_1}),...,X(B\bm{t_k})) = C(s_1,...,s_k\ddagger X(\bm{t_1}),...,X(\bm{t_k})), \] where the transformation $B$ belongs to the group of rotations $SO(\mathbb{R}^d)$ and the corresponding matrix has $det(B)=1.$

Representations \eqref{prop:cum_func_mom} follow from
\[ EX(\bm{t}) =\frac{1}{i}\frac{dC(s\ddagger X(\bm{t}))}{ds}\bigg|_{s=0}= -i\mathcal{K}^{'}(0)\int\limits_{0}^{\infty}\int\limits_{\mathbb{R}^d}\frac{1}{2\lambda}e^{-\lambda||u||}d\bm{u}\pi(d\lambda)=-\frac{i \pi^{\frac{d}{2}}\mathcal{K}'(0)}{\Gamma(\frac{d}{2})}\int\limits_0^\infty\frac{\pi(d\lambda)}{\lambda^{d+1}}, \] and
\[Var X(\bm{t})=-\frac{d^2C(s\ddagger X(\bm{t}))}{ds^2}\bigg|_{s=0} = -\mathcal{K}^{''}(0)\int\limits_{0}^{\infty}\int\limits_{\mathbb{R}^d}\frac{1}{4\lambda^2}e^{-2\lambda||u||}d\bm{u}\pi(d\lambda) = -\frac{\pi^{\frac{d}{2}}\mathcal{K}''(0)}{2^{d+2}\Gamma(\frac{d}{2})}\int\limits_0^\infty\frac{\pi(d\lambda)}{\lambda^{d+2}},\] where the last integral is calculated by applying the $d$-dimensional spherical coordinates.

Finally, by \cite[Proposition 3.6]{rajput1989spectral} it holds $X(\bm{t}) \in  \overline{lin}\{\Lambda(A), A\in\mathcal{S}'\}_{L_p(\Omega,P)},$ $\bm{t}\in\mathbb{R}^d,$ which finishes the proof of the theorem. \end{proof}

\begin{theorem}\label{Th3}
Let the supCAR random field $X(\cdot)$ satisfy the conditions of Theorem {\rm \ref{th:ex1}}, and the moments in \eqref{prop:cum_func_mom} are finite such that $EX(\bm{t})=0, \ \bm{t}\in\mathbb{R}^d.$ Then, its covariance function is
\begin{equation}\label{corsupcar} r(\bm{t}) = -c_2\mathcal{K}''(0)||\bm{t}||^{\frac{d}{2}+1}\int\limits_{0}^{\infty} \lambda^{-\frac{d}{2}-1}K_{\frac{d}{2}+1}(||\lambda \bm{t}||)\pi(d\lambda)\end{equation}
and the spectral density is
\begin{equation}\label{specsupcar}  f(\bm{\omega}) = -c_1^2\mathcal{K}''(0)\int\limits_{0}^\infty \frac{\pi(d\lambda)}{(||\bm{\omega}||^2 + \lambda^2)^{d+1}},\end{equation}where 
\begin{equation*} c_2:=\frac{c_1^2}{\Gamma(d+1)}\left(\frac{\pi}{2}\right)^{\frac{d}{2}}.
\end{equation*} 
\end{theorem}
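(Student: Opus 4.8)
The plan is to read off the covariance directly from the explicit joint cumulant function \eqref{eq:supcar_cumj} established in Proposition~\ref{prop:cum_func}. Since the field is centred, $r(\bm{t})=\mathrm{Cov}(X(\bm{0}),X(\bm{t}))$ equals minus the mixed second-order derivative of the joint cumulant function at the origin,
\[ r(\bm{t}) = -\frac{\partial^2}{\partial s_1\partial s_2}C(s_1,s_2\ddagger X(\bm{0}),X(\bm{t}))\Big|_{s_1=s_2=0}. \]
Taking $k=2$, $\bm{t}_1=\bm{0}$ and $\bm{t}_2=\bm{t}$ in \eqref{eq:supcar_cumj}, the integrand is $\mathcal{K}(a_1 s_1+a_2 s_2)$ with $a_i=-\frac{1}{2\lambda}e^{-\lambda\|\bm{t}_i-\bm{u}\|}$, so its mixed second derivative equals $a_1a_2\mathcal{K}''(a_1s_1+a_2s_2)$ and reduces at the origin to $a_1a_2\mathcal{K}''(0)$. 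After differentiating under the integral sign this gives
\[ r(\bm{t}) = -\mathcal{K}''(0)\int_0^\infty\frac{1}{4\lambda^2}\int_{\mathbb{R}^d}e^{-\lambda\|\bm{u}\|}e^{-\lambda\|\bm{t}-\bm{u}\|}\,d\bm{u}\,\pi(d\lambda). \]
An equivalent route is to invoke the $L_2$-membership $X(\bm{t})\in\overline{lin}\{\Lambda(A)\}_{L_2}$ from Proposition~\ref{prop:cum_func} together with the Rajput--Rosinski isometry, whose noise intensity is $-\mathcal{K}''(0)=\mathrm{Var}(\Lambda(A))/[\pi\times Leb](A)$; this produces the same inner convolution.

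The next step is to recognise the inner spatial integral as precisely the convolution defining the covariance of a single CAR(1) field $S_\lambda$ driven by noise of unit intensity. Hence, by the closed form \eqref{mat_cov}, it equals $c_1^2(\pi/2)^{d/2}\|\lambda\bm{t}\|^{d/2+1}K_{d/2+1}(\|\lambda\bm{t}\|)/(\lambda^{d+2}\Gamma(d+1))$. Using $\|\lambda\bm{t}\|^{d/2+1}=\lambda^{d/2+1}\|\bm{t}\|^{d/2+1}$, so that $\|\lambda\bm{t}\|^{d/2+1}/\lambda^{d+2}=\|\bm{t}\|^{d/2+1}\lambda^{-d/2-1}$, and pulling the $\bm{t}$-dependence and the constant $c_1^2(\pi/2)^{d/2}/\Gamma(d+1)=c_2$ outside the $\lambda$-integral, I obtain exactly \eqref{corsupcar}.

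For the spectral density I would exploit the same superposition structure. Writing $r(\bm{t})=-\mathcal{K}''(0)\int_0^\infty \gamma_\lambda(\bm{t})\,\pi(d\lambda)$, where $\gamma_\lambda$ is the unit-intensity CAR(1) covariance whose Fourier transform is $c_1^2(\|\bm{\omega}\|^2+\lambda^2)^{-(d+1)}$ by \eqref{speccar}, I apply $\mathcal{F}_d$ and interchange the Fourier integral over $\bm{t}$ with the $\lambda$-integral, arriving at
\[ f(\bm{\omega}) = -c_1^2\mathcal{K}''(0)\int_0^\infty\frac{\pi(d\lambda)}{(\|\bm{\omega}\|^2+\lambda^2)^{d+1}}, \]
which is \eqref{specsupcar}. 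The main obstacle in both parts is justifying the interchanges of limit operations: differentiation under the double integral for the covariance and Fubini's theorem for the Fourier inversion. Both are controlled by the finiteness of $\mathrm{Var}\,X(\bm{t})$ in \eqref{prop:cum_func_mom}, equivalently the finiteness of $\mathcal{K}''(0)$ together with $\int_0^\infty\lambda^{-d-2}\,\pi(d\lambda)<\infty$, guaranteed under the existence conditions of Theorem~\ref{th:ex1}. This integrability supplies the dominating function needed to differentiate $\mathcal{K}$ under the integral and makes $r$ integrable against the exponential Fourier kernel, so that $f$ is well defined and nonnegative.
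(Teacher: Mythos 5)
Your derivation of \eqref{corsupcar} coincides with the paper's: take $k=2$ in the joint cumulant representation \eqref{eq:supcar_cumj}, differentiate at the origin to get
\[ r(\bm{t}) = -\mathcal{K}''(0)\int\limits_0^\infty\frac{1}{4\lambda^2}\int\limits_{\mathbb{R}^d}e^{-\lambda\|\bm{u}\|-\lambda\|\bm{t}-\bm{u}\|}\,d\bm{u}\,\pi(d\lambda), \]
recognise the inner integral as the self-convolution of the CAR(1) kernel, and insert the Mat\'ern closed form from \cite{brockwell2017continuous}; your simplification $\|\lambda\bm{t}\|^{d/2+1}/\lambda^{d+2}=\|\bm{t}\|^{d/2+1}\lambda^{-d/2-1}$ is exactly how the paper arrives at \eqref{corsupcar}.

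For the spectral density, however, your Fourier step runs in the wrong direction, and the justification you offer fails precisely in a regime the theorem covers. You propose to apply $\mathcal{F}_d$ to $r$ and interchange with the $\lambda$-integral, arguing that finiteness of the variance makes $r$ ``integrable against the Fourier kernel''. The kernel $e^{i(\bm{\omega},\bm{t})}$ is merely bounded, so this requires $r\in L_1(\mathbb{R}^d)$ --- but by Proposition~\ref{prop:corasym}, in the long-range dependent case $\alpha\in(d+2,2d+2)$ one has $r(\bm{t})\sim c_3\, l(\|\bm{t}\|)\,\|\bm{t}\|^{-(\alpha-d-2)}$, which is not integrable, and indeed $f$ then has a singularity at the origin, so the classical forward transform of $r$ does not exist. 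The sound argument --- and the paper's --- goes the other way: write the inner convolution as $\mathcal{F}_d^{-1}\bigl[\mathcal{F}_d^{2}[g_\lambda]\bigr]$ with $\mathcal{F}_d[g_\lambda](\bm{\omega})=c_1(\|\bm{\omega}\|^2+\lambda^2)^{-(d+1)/2}$, so that
\[ r(\bm{t}) = \int\limits_{\mathbb{R}^d}e^{i(\bm{t},\bm{\omega})}\left(-c_1^2\mathcal{K}''(0)\int\limits_0^\infty\frac{\pi(d\lambda)}{(\|\bm{\omega}\|^2+\lambda^2)^{d+1}}\right)d\bm{\omega}, \]
where the interchange of the $\bm{\omega}$- and $\lambda$-integrals is legitimate by Tonelli, since each $f_\lambda(\bm{\omega})=c_1^2(\|\bm{\omega}\|^2+\lambda^2)^{-(d+1)}\ge0$ and $\int_0^\infty\int_{\mathbb{R}^d}f_\lambda(\bm{\omega})\,d\bm{\omega}\,\pi(d\lambda)\propto\int_0^\infty\lambda^{-d-2}\pi(d\lambda)<\infty$ --- exactly your finite-variance condition from \eqref{prop:cum_func_mom}. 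So the dominating ingredient you identified is the right one, but it must be deployed in the frequency domain to exhibit $f$ as the density of the spectral measure, not applied to $r$ itself.
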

\begin{remark}
The dependence structure of the supCAR field $X(\cdot)$ is defined by the probability measure $\pi(\cdot)$ and does not depend on the marginal distributions of the underlying CAR(1) fields $S_\lambda(\cdot)$. The covariance function and the spectral density of the supCAR field $X(\cdot)$ are weighted averages with respect to the probability measure $\pi(\cdot)$ of the covariance function {\rm \eqref{mat_cov}} and the spectral density {\rm \eqref{speccar}} of the CAR(1) field $S_\lambda(\cdot)$. 
\end{remark}

\begin{proof}
By the representation \eqref{eq:supcar_cumj} 
\[ cov(X(\mathbb{0}), X(\bm{t})) = -\frac{\partial^2}{\partial s_1 \partial s_2} C(s_1,s_2\ddagger X(\mathbb{0}), X(\bm{t}) )\bigg|_{s_1=s_2=0}\]\[= -\mathcal{K}''(0)\int\limits_{0}^{\infty}\int\limits_{\mathbb{R}^d}\frac{1}{4\lambda^2}e^{-\lambda||\bm{u}||-\lambda||\bm{t}-\bm{u}||}d\bm{u}\pi(d\lambda).\]  By denoting $g_\lambda(\bm{u}):=-e^{-\lambda||\bm{u}||}/{(2\lambda)},$ one obtains that
\begin{equation}\label{th3:conv} cov(X(\bm{0}), X(\bm{t})) = -\mathcal{K}''(0) \int\limits_{0}^\infty \{ g_\lambda * g_\lambda \}(\bm{t}) \pi(d\lambda)=-\mathcal{K}''(0) \int\limits_{0}^\infty \mathcal{F}_d^{-1}[\mathcal{F}_d^{2}[g_\lambda]](\bm{t})\pi(d\lambda),\end{equation} where $\{ g_\lambda * g_\lambda \}$ denotes a convolution, $\mathcal{F}_d[\cdot](\cdot)$ and $\mathcal{F}_d^{-1}[\cdot](\cdot)$ are the $d$-dimensional Fourier and inverse Fourier transforms respectively.

It was given in \cite{brockwell2017continuous}, that

\begin{equation}\label{th3:fourier} \mathcal{F}_d[g_\lambda](\bm{\omega}) = c_1(||\bm{\omega}||^2 + \lambda^2)^{-(d+1)/2}, \end{equation} and 
\[  \mathcal{F}_d^{-1}[\mathcal{F}^2_d[g_\lambda]](\bm{t}) = c_1^2 \left( \frac{\pi}{2} \right)^{d/2} \frac{||\lambda \bm{t}||^{d/2+1}}{|\lambda|^{d+2} \Gamma(d+1)} K_{d/2+1}(||\lambda\bm{t}||), \] which implies \eqref{corsupcar}. Applying the inverse Fourier transform, by \eqref{corsupcar} and \eqref{th3:fourier}, one obtains the spectral density \eqref{specsupcar}.
\end{proof}
\begin{example}
Let the supCAR random field $X(\cdot)$ satisfy the conditions of Theorem~{\rm \ref{th:ex1}}, and the moments in \eqref{prop:cum_func_mom} are finite such that $EX(\bm{t})=0, \ \bm{t}\in\mathbb{R}^d.$ If $\pi(\cdot)$ is the Gamma measure $\Gamma(H,1), \ H>0,$ with the density \eqref{gamma:density}, then, by Theorem~{\rm \ref{Th3}}, the covariance function of $X(\cdot)$~equals \[ r(\bm{t}) = -\frac{c_2\mathcal{K}''(0)||\bm{t}||^{\frac{d}{2}+1}}{\Gamma(H)} \int\limits_{0}^{\infty} \lambda^{H-\frac{d}{2}-2}K_{\frac{d}{2}+1}(||\bm{t}||\lambda)e^{-\lambda}d\lambda.\]
By {\rm \cite[Table 6.8]{bateman1954tables}}, one obtains
\begin{equation}\label{ex1:cor_2f1} r(\bm{t}) = -\frac{c_2\pi^{\frac{1}{2}}\mathcal{K}''(0)\Gamma(H-d-2)}{2^{H-\frac{d}{2}-1}\Gamma(H-\frac{d}{2}-\frac{1}{2})}||\bm{t}||^{d+2}{}_2 F_1\left(\frac{H+1}{2}, \frac{H}{2}; H-\frac{d}{2}-\frac{1}{2};1-||\bm{t}||^2\right),\end{equation} where ${}_2 F_1(\cdot)$ is the hypergeometric function.

As for fixed values $a,b$ and $c$, the hypergeometric function exhibits the asymptotic behaviour ${}_2F_1(a,b;c;x)\sim (-x)^{-\min(a,b)},\ x\to-\infty,$ see {\rm \cite[Proof of Theorem 2.3.2]{andrews1999special}}, the covariance function satisfies $r(\bm{t})\sim{\bm{||t||}}^{d+2-H}$ as $\bm{||t||}\to\infty,$ and $ \int_{\mathbb{R}^d}r(\bm{t})d\bm{t}<\infty$ if and only if $H>2d+2.$ Thus, the supCAR random field $X(\cdot)$ possesses long-range dependence for $H\leq 2d+2,$ and short-range dependence in the opposite case.

By \eqref{specsupcar}, the spectral density is given as
\[  f(\bm{\omega}) = -c_1^2\mathcal{K}''(0)\int\limits_0^\infty\frac{1}{(||\bm{\omega}||^2+\lambda^2)^{d+1}}\pi(d\lambda)=-\frac{c_1^2\mathcal{K}''(0)}{\Gamma(H)}\int\limits_0^{\infty}\frac{\lambda^{H-1}e^{-\lambda}}{(||\bm{\omega}||^2+\lambda^2)^{d+1}}d\lambda.\] By using $||\omega||=r$ and {\rm \cite[2.1.4.3]{prudnikov1986integrals}}, one gets
\[ f(r) = -\frac{c_1^2\mathcal{K}''(0)\Gamma(H-1)}{(-2)^{d-1}d!\Gamma(H)} \left( \frac{d}{rdr} \right)^{d}\left( r^{H-1}\left( r^{ir+iH\pi}\Gamma(2-2H,ir)\right.\right.\]\[ + \left. \left. r^{H-1}\left(-r^{ir-iH\pi}\Gamma(2-2H,-ir) \right) \right)\right),\] where $\Gamma(\cdot,\cdot)$ is the incomplete Gamma function.

Figure {\rm \ref{fig:car}} plots realisations of CAR(1) fields $S_\lambda(\cdot)$ with marginal Gamma distributions for the values $\lambda=0.5,1,1.5$ and $2.5.$ As the parameter $\lambda$ increases, realisations of the fields $S_\lambda(\cdot)$ become rougher, and variances of their marginal distributions decrease such that local spikes occur more often, but those spikes are of smaller magnitudes.
\begin{figure}[htb!]
\centering
\begin{subfigure}{7cm} 
  \centering
  \includegraphics[width=\linewidth, height=0.48\linewidth]{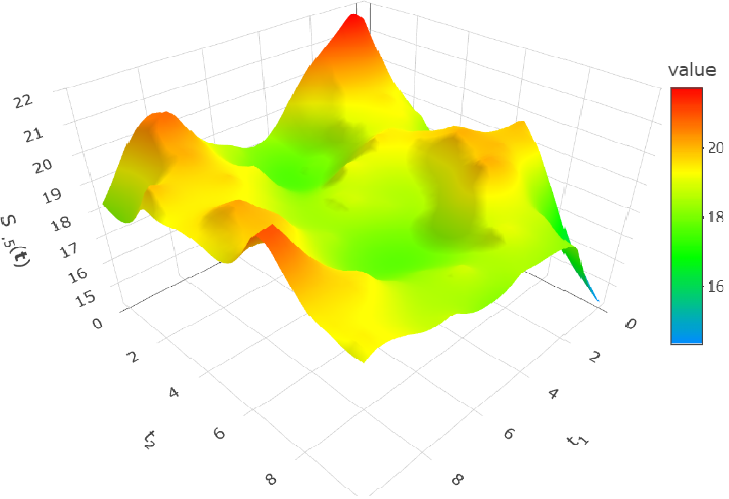} \vspace{-5mm}
  \caption{$S_{0.5}(\cdot)$}
\end{subfigure}
\begin{subfigure}{7cm}
  \centering
  \includegraphics[width=\linewidth, height=0.48\linewidth]{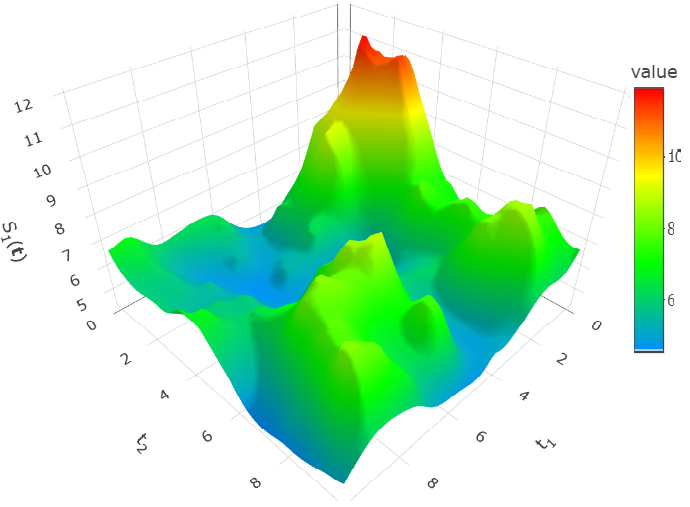} \vspace{-5mm}
  \caption{$S_{1}(\cdot)$} 
\end{subfigure}

\begin{subfigure}{7cm}
  \centering
  \includegraphics[width=\linewidth, height=0.48\linewidth]{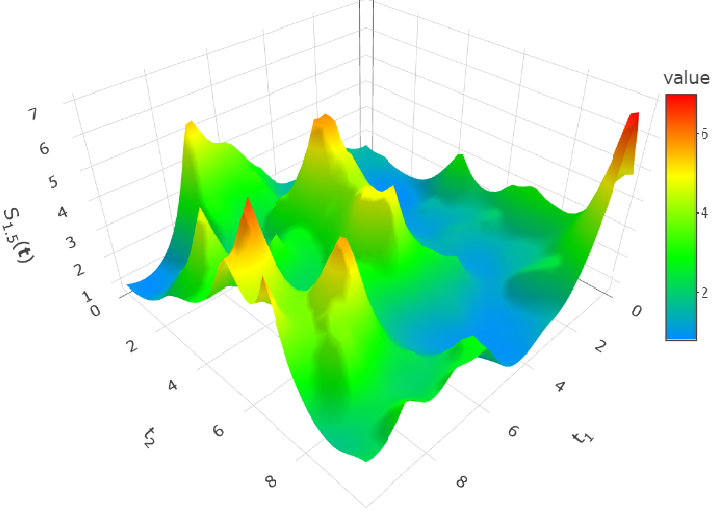}\vspace{-2mm}
  \caption{$S_{1.5}(\cdot)$}
\end{subfigure}%
\begin{subfigure}{7cm}
  \centering
  \includegraphics[width=\linewidth, height=0.48\linewidth]{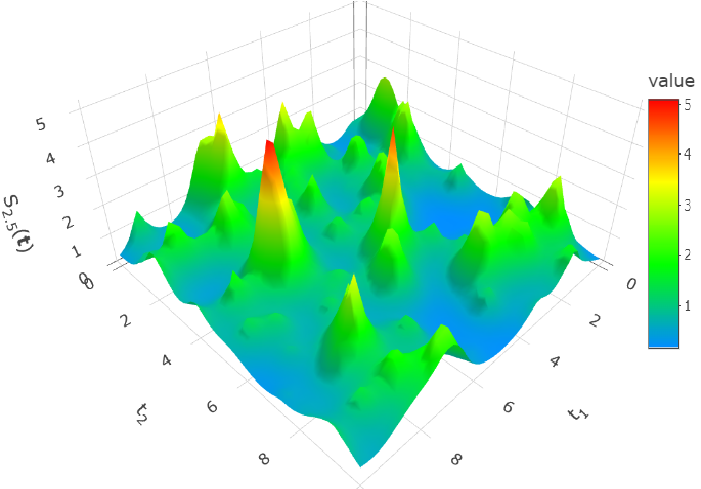}\vspace{-2mm}
  \caption{$S_{2.5}(\cdot)$}
\end{subfigure}\vspace{-2mm}
\caption{Realisations of CAR(1) fields $S_\lambda(\cdot)$ with marginal Gamma distributions}
\label{fig:car}
\end{figure}

Figure {\rm \ref{fig2:cor}} demonstrates (dotted lines) that the normalised covariance of $S_{\lambda}(\cdot)$ decreases rapidly with the increase of the parameter $\lambda$. Figures {\rm \ref{fig2:x_lrd}} and {\rm \ref{fig2:x_srd}} show the realisations of supCAR fields~$X(\cdot)$ with marginal Gamma distributions, where $\pi(\cdot)$ is given by {\rm \eqref{gamma:density}} with $H=5$ and $H=8,$ and covariances defined by \eqref{ex1:cor_2f1}. For the supCAR field~$X(\cdot)$ with $H=5,$ the underlying CAR fields $S_\lambda(\cdot)$ with smaller values of $\lambda$ receive larger weight, which leads to the long-range dependence and smoother local behaviour of  $X(\cdot),$ while for the case $H=8,$ the CAR fields $S_\lambda(\cdot)$ with larger values of $\lambda$ receive larger weights, which leads to a short-range dependence and more spiky local behaviour of  $X(\cdot).$ Figure {\rm \eqref{fig2:cor}} confirms these behaviour for the supCAR fields $X(\cdot)$ with $H=5$ and $H=8.$

\begin{figure}[htb!]
\centering
\begin{subfigure}{7cm}
  \centering
  \includegraphics[width=\linewidth, height=0.48\linewidth]{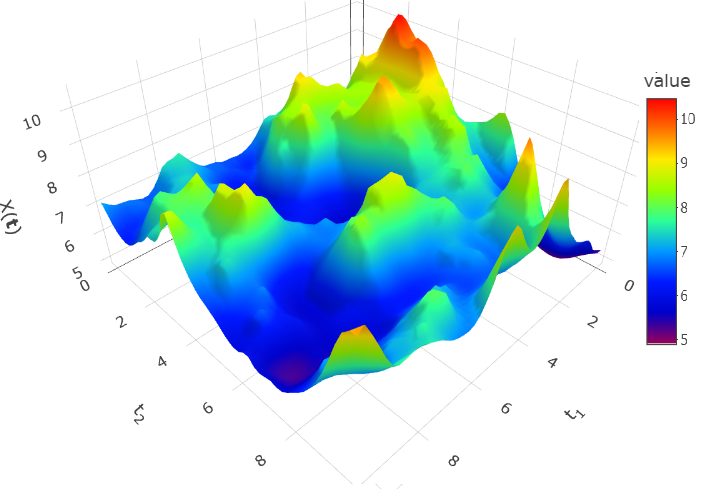}\vspace{-2mm}
  \caption{$X(\cdot)$ for $H=5$}
  \label{fig2:x_lrd}
\end{subfigure}%
\begin{subfigure}{7cm}
  \centering
  \includegraphics[width=\linewidth, height=0.46\linewidth]{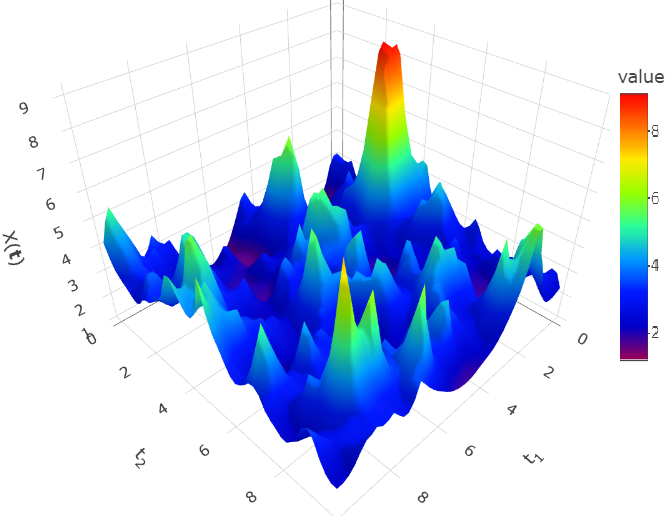}\vspace{-2mm}
  \caption{$X(\cdot)$ for $H=8$} 
  \label{fig2:x_srd}
\end{subfigure}
\caption{Realisations of $X(\cdot)$ with marginal Gamma distributions and $\pi(\cdot)$ given by~\eqref{gamma:density}}
\end{figure}

\begin{figure}[htb!]
  \centering
  \includegraphics[width=1\linewidth, height=0.3\linewidth]{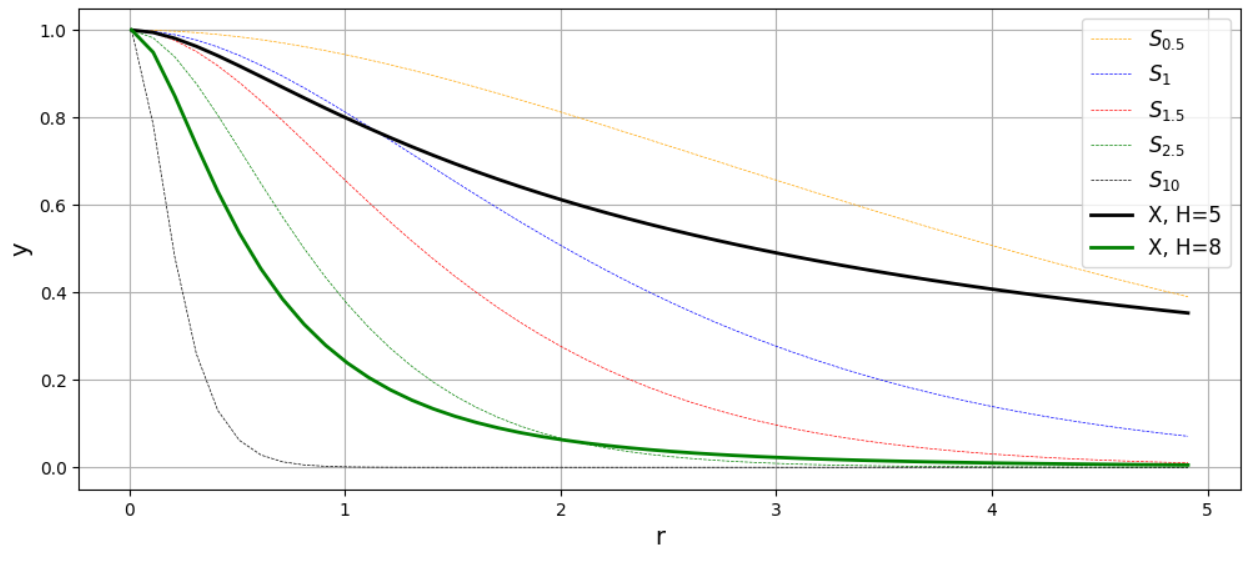}\vspace{-3mm}
  \caption{Normalised covariances of CAR (dotted lines) and supCAR (solid lines) fields}
  \label{fig2:cor}
\end{figure}
\end{example}

The following result generalises the example.
\begin{proposition}\label{prop:corasym}
Let the supCAR field $X(\cdot)$ satisfy the conditions of Theorem {\rm \ref{th:ex1}}, its first two moments are finite, $EX(\bm{t})=0, \ \bm{t}\in\mathbb{R}^d,$ and $\pi(\cdot)$ has a density $p(\lambda)=l(1/\lambda)\lambda^{\alpha-1},\ \alpha>d+2,$ where $l(\cdot)$ is a slowly varying function, strictly positive and bounded on each compact subset. Then, the covariance function of $X(\cdot)$ satisfies
\begin{equation}\label{supcarcorasym}r(\bm{t}) \sim c_3 \frac{l(||\bm{t}||)}{||\bm{t}||^{\alpha-d-2}}, \ {\mbox when} \ ||\bm{t}||\to\infty.\end{equation} If $\alpha\in(d+2,2d+2),$ the spectral density of the random field $X(\cdot)$ has a singularity at the origin and satisfies
\begin{equation}\label{supcarspecasym} f(\bm{\omega}) \sim c_4 \frac{l\left(\frac{1}{||\bm{\omega}||}\right)}{||\bm{\omega}||^{2d+2-\alpha}}, \ {\mbox when} \ \ ||\bm{\omega}||\to0,\end{equation} where 
\[ c_3 := -c_2 \mathcal{K}''(0) \int\limits_{0}^{\infty} \lambda^{\alpha-\frac{d}{2}-2}K_{\frac{d}{2}+1}(\lambda)d\lambda \ \ {\rm and} \ \ c_4 := -c_1^2 \mathcal{K}''(0)  \int\limits_0^\infty\frac{\lambda^{\alpha-1}}{(1+\lambda^2)^{d+1}}d\lambda.\]
 
\begin{proof}
Let us consider the following relation and apply \eqref{corsupcar}
\[  \frac{||\bm{t}||^{\alpha-d-2}r(\bm{t})}{l(||\bm{t}||)} =  -\frac{c_2\mathcal{K}''(0)||\bm{t}||^{\alpha-\frac{d}{2}-1}}{l(||\bm{t}||)} \int\limits_{0}^{\infty} \lambda^{-\frac{d}{2}-1}K_{\frac{d}{2}+1}(||\lambda \bm{t}||)\pi(d\lambda)\]
\begin{equation}\label{prop3:covtmp}=  -c_2\mathcal{K}''(0) \int\limits_{0}^{\infty} \lambda^{\alpha-\frac{d}{2}-2}K_{\frac{d}{2}+1}(\lambda)\frac{l\left(\frac{||\bm{t}||}{\lambda}\right)}{l(||\bm{t}||)}d\lambda.\end{equation} 
Let us show that one can use the dominated convergence theorem in the above integral. Indeed, as $\alpha>d+2,$ $K_{\frac{d}{2}+1}(\lambda)\sim \lambda^{-\frac{d}{2}-1}, \ \lambda\to0,$ $K_{\frac{d}{2}+1}(\lambda)$ decays exponentially as $\lambda\to\infty,$ and as by Potter's inequality \cite[Theorem 1.5.6]{bingham1989regular} one can choose a sufficiently small $\delta>0$ such that ${l\left(||\bm{t}||/\lambda\right)}/{l(||\bm{t}||)}\leq C(\delta) \max\{\lambda^{-\delta},\lambda^{\delta}\},$ the next integral is finite
\[ \int\limits_{0}^{\infty} \max\{
\lambda^{-\delta},\lambda^{\delta} \}\lambda^{\alpha-\frac{d}{2}-2}K_{\frac{d}{2}+1}(\lambda)d\lambda <\infty.\] Thus, by taking a limit in \eqref{prop3:covtmp}, when $||t||\to\infty,$ one gets \eqref{supcarcorasym}.
 
By \eqref{specsupcar}, it follows that
\[ \frac{||\bm{\omega}||^{2d+2-\alpha}f(\bm{\omega})}{l\left( \frac{1}{||\bm{\omega}||} \right)} = -c_1^2\mathcal{K}''(0)||\bm{\omega}||^{2d+2-\alpha}\int\limits_0^\infty\frac{\lambda^{\alpha-1}l\left(\frac{1}{\lambda}\right)}{(||\bm{\omega}||^2+\lambda^2)^{d+1} l\left( \frac{1}{||\bm{\omega}||} \right)}d\lambda \] \begin{equation}\label{prop3:sdtmp}= -c_1^2\mathcal{K}''(0) \int\limits_0^\infty\frac{\lambda^{\alpha-1}l\left(\frac{1}{\lambda||\bm{\omega}||}\right)}{(1+\lambda^2)^{d+1}l\left( \frac{1}{||\bm{\omega}||} \right)}d\lambda.\end{equation} By applying Potter's inequality and using $\alpha\in(d+2,2d+2),$ one can see that the dominated convergence theorem holds for the integral \eqref{prop3:sdtmp}, and, by taking $||\bm{\omega}||\to0,$ one gets \eqref{supcarspecasym}. \end{proof}

\end{proposition}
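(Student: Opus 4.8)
The plan is to begin from the closed forms \eqref{corsupcar} and \eqref{specsupcar} established in Theorem~\ref{Th3}, substitute the regularly varying density $p(\lambda)=l(1/\lambda)\lambda^{\alpha-1}$ into each integral, rescale the integration variable so that the spatial/frequency argument is absorbed, and then pass to the limit by dominated convergence.

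For the covariance \eqref{supcarcorasym}, I would insert $\pi(d\lambda)=l(1/\lambda)\lambda^{\alpha-1}\,d\lambda$ into \eqref{corsupcar} and apply the scaling substitution replacing $\lambda$ by $\lambda/||\bm{t}||$. Collecting the powers of $||\bm{t}||$ arising from the prefactor $||\bm{t}||^{d/2+1}$, the kernel factor $\lambda^{-d/2-1}$, the density factor $\lambda^{\alpha-1}$, and the Jacobian produces the overall scale $||\bm{t}||^{d+2-\alpha}$. After multiplying by $||\bm{t}||^{\alpha-d-2}$ and dividing by $l(||\bm{t}||)$, one is left with an integral of $\lambda^{\alpha-d/2-2}K_{d/2+1}(\lambda)$ against the ratio $l(||\bm{t}||/\lambda)/l(||\bm{t}||)$. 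Because $l$ is slowly varying this ratio tends to $1$ pointwise as $||\bm{t}||\to\infty$, and the limiting integral is exactly $c_3$.

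For the spectral density \eqref{supcarspecasym} the same recipe uses the substitution replacing $\lambda$ by $\lambda||\bm{\omega}||$ in \eqref{specsupcar}. The denominator factors as $||\bm{\omega}||^{2d+2}(1+\lambda^2)^{d+1}$, so that $||\bm{\omega}||^{-(2d+2-\alpha)}$ is pulled out and, after dividing by $l(1/||\bm{\omega}||)$, one obtains an integral of $\lambda^{\alpha-1}(1+\lambda^2)^{-(d+1)}$ against the ratio $l(1/(\lambda||\bm{\omega}||))/l(1/||\bm{\omega}||)$, which again converges to $1$ as $||\bm{\omega}||\to0$, with limit $c_4$.

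The main obstacle is justifying the interchange of limit and integral, which I would resolve by dominated convergence combined with Potter's inequality \cite[Theorem~1.5.6]{bingham1989regular}: for any small $\delta>0$ and all sufficiently large argument the ratio of slowly varying functions is bounded by $C(\delta)\max\{\lambda^{-\delta},\lambda^{\delta}\}$. One then checks integrability of the dominating integrand at both endpoints. In the covariance case this relies on $K_{d/2+1}(\lambda)\sim C\lambda^{-(d/2+1)}$ as $\lambda\to0$ and the exponential decay of $K_{d/2+1}$ at infinity, with integrability near the origin secured precisely by $\alpha>d+2$; the exponential tail makes the upper endpoint harmless, so no upper bound on $\alpha$ is needed here. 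In the spectral case, integrability near $0$ follows from $\alpha>0$, whereas near infinity the integrand behaves like $\lambda^{\alpha-2d-3}$, which is integrable only when $\alpha<2d+2$. This is exactly the restriction $\alpha\in(d+2,2d+2)$ appearing in the statement, and it identifies the finiteness of $c_4$ as the crux of the spectral-density part. Once integrability is established, dominated convergence delivers both asymptotics with the stated constants.
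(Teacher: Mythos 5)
Your proposal is correct and follows essentially the same route as the paper's proof: substitute the density into \eqref{corsupcar} and \eqref{specsupcar}, rescale by $\lambda\mapsto\lambda/||\bm{t}||$ (respectively $\lambda\mapsto\lambda||\bm{\omega}||$) to arrive at the paper's displays \eqref{prop3:covtmp} and \eqref{prop3:sdtmp}, and pass to the limit by dominated convergence with the Potter bound $l(||\bm{t}||/\lambda)/l(||\bm{t}||)\leq C(\delta)\max\{\lambda^{-\delta},\lambda^{\delta}\}$ together with the Bessel asymptotics at the endpoints. Your endpoint bookkeeping (integrability at the origin from $\alpha>d+2$ for the covariance, and the restriction $\alpha<2d+2$ forced by the tail $\lambda^{\alpha-2d-3}$ for the spectral density) matches the role these conditions play in the paper's argument.
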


\section{Limit theorems for supCAR fields}\label{sect:limit}
This section derives limit theorems for integrated supCAR fields. In what follows, it is more convenient to use the alternative L\'evy-Khinchine representation of the cumulant function $\mathcal{K}(\cdot)$ in \eqref{cum_supcar1}. We assume that 
\begin{equation}\label{eq:W_x2}\int_\mathbb{R}x^2W(dx)<\infty.\end{equation} Then,
\[\mathcal{K}(s) = - \frac{b}{2}s^2 + \int_\mathbb{R}(e^{is x} - 1 - is\tau(x))W(dx) = ia_1s - \frac{b}{2}s^2 + \int_\mathbb{R}(e^{is x} - 1 - isx)W(dx),\] where $a_1=\int_\mathbb{R}(x-\tau(x))W(dx),$ see \cite[Section 8]{ken1999levy} for more details. In this case, the notation $(a_1,b,W,\pi)_1$ for the characteristic quadruple of the supCAR field $X(\cdot)$ will be used. Note also that the variance of the Gaussian component $b$ and the L\'evy measure $W(\cdot)$ remain the same. 

Without loss of generality, it will be assumed that the supCAR field $X(\cdot)$ has a zero expectation. Then, the characteristic quadruple of the supCAR field $X(\cdot)$ is of the form $(0,b,W,\pi)_1.$ Indeed, by \eqref{prop:cum_func_mom}, one gets that $\mathcal{K}'(0) = 0$ from which follows that $a_1=0,$ and the cumulant function $\mathcal{K}(\cdot)$ becomes
\begin{equation}\label{cum_supcar_one}
\mathcal{K}(s) = - \frac{b}{2}s^2 + \int\limits_\mathbb{R}(e^{is x} - 1 - isx)W(dx). 
\end{equation} 

In the following results, it is assumed that the probability measure $\pi(\cdot)$ has the density 
\begin{equation}\label{eq:meas_lrd}p(\lambda)=l(1/\lambda)\lambda^{\alpha-1},\ \alpha>d+1,\end{equation} where $l(\cdot)$ is a slowly varying at the infinity~function, strictly positive and bounded on each compact subset.

Let us introduce the integrated supCAR field 
\[ X^*(T):= \int\limits_{\Delta(T)} X(\bm{y})d\bm{y},\] where $\Delta\subset\mathbb{R}^d$ is a Jordan-measurable convex bounded set containing the origin in its interior such that $|\Delta|>0.$ Then, its boundary satisfies $|\partial\Delta|=0.$ Let $\Delta(T)$ be a homothetic transformation of $\Delta$ with the parameter $T.$ In statistical applications, $\Delta(T)$ plays a role of the expanding observation window, when $T\to\infty.$ Without loss of generality, let us assume that $\Delta\subseteq B(1).$ 

First, consider a supCAR field $X(\cdot)$ with the characteristic quadruple $(0,b,W,\pi)_1$ such that in \eqref{cum_supcar_one} $b>0$ (the Gaussian component is present) and
\begin{equation}\label{eq:meas_srd}\int_0^{\infty} \frac{\pi(d\lambda)}{\lambda^{2d+2}} < \infty,\end{equation} which implies the short-range dependence of $X(\cdot)$ by Proposition \ref{prop:corasym} as $\pi(\cdot)$ has a density $p(\lambda)=l(1/\lambda)\lambda^{\alpha-1},\ \alpha>2d+2.$ Then, the following central limit theorem~holds.

\begin{theorem}\label{th:brow}
Let $b>0$ and the measure $\pi(\cdot)$ satisfies {\rm \eqref{eq:meas_srd}}, then for $t\in[0,1]$
\begin{equation}\label{th1:conv_main} \frac{1}{\sqrt{c_5 \mathcal{K}''(0)}T^{ \frac{d}{2}}}X^*\left(t^\frac{1}{d}T\right) \overset{fdd}{{\to}}  B(t),\ {\mbox when} \ T\to\infty, \end{equation}  where $B(t), \ t\in[0,1],$ is the standard Brownian motion and \[c_5:=2|\Delta|\left(\frac{\pi^{\frac{d}{2}}\Gamma(d)}{\Gamma(\frac{d}{2})}\right)^2 \int_0^{\infty}\frac{\pi(d\lambda)}{\lambda^{2d+2}}.\]
\end{theorem}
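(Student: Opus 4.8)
The plan is to establish convergence of the finite-dimensional distributions by working directly with the joint cumulant function of the integrated field and exploiting the L\'evy--Khinchine structure of $\mathcal{K}(\cdot)$ in \eqref{cum_supcar_one}. Fix times $0\le t_1<\dots<t_n\le 1$ and scalars $\theta_1,\dots,\theta_n$, and let $N_T$ denote the deterministic normalisation appearing in \eqref{th1:conv_main}, so that $N_T^2=c_5\,(-\mathcal{K}''(0))\,T^d$ with $-\mathcal{K}''(0)=b+\int_{\mathbb{R}}x^2W(dx)>0$. First I would record that, by stochastic Fubini for Rajput--Rosinski integrals, the integrated field has the single-integral representation $X^*(T)=\int_0^\infty\int_{\mathbb{R}^d}\Psi_T(\bm{u},\lambda)\,\Lambda(d\bm{u},d\lambda)$ with kernel $\Psi_T(\bm{u},\lambda):=-\frac{1}{2\lambda}\int_{\Delta(T)}e^{-\lambda\|\bm{y}-\bm{u}\|}\,d\bm{y}$, whose $\Lambda$-integrability over the bounded set $\Delta(T)$ is inherited from Theorem~\ref{th:ex1}. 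Repeating the computation of Proposition~\ref{prop:cum_func} verbatim, the joint cumulant function of the normalised vector is
\[
C_T(\bm{\theta})=\int_0^\infty\int_{\mathbb{R}^d}\mathcal{K}\!\left(\frac{1}{N_T}\sum_{j=1}^n\theta_j\,\Psi_{t_j^{1/d}T}(\bm{u},\lambda)\right)d\bm{u}\,\pi(d\lambda),
\]
and it suffices to show that $C_T(\bm{\theta})\to-\tfrac12\sum_{i,j}\theta_i\theta_j\min(t_i,t_j)$, the log-characteristic function of $\sum_j\theta_j B(t_j)$.

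The core step is a covariance (second-order) asymptotic. With $\Psi_\infty(\lambda):=-\frac{\pi^{d/2}\Gamma(d)}{\Gamma(d/2)\lambda^{d+1}}=\lim_{T\to\infty}\Psi_T(\bm{u},\lambda)$ valid for $\bm{u}$ deep inside $\Delta(T)$, I would prove the key limit
\[
\frac{1}{T^d}\int_0^\infty\int_{\mathbb{R}^d}\Psi_{t_i^{1/d}T}\,\Psi_{t_j^{1/d}T}\,d\bm{u}\,\pi(d\lambda)\ \longrightarrow\ |\Delta|\Big(\tfrac{\pi^{d/2}\Gamma(d)}{\Gamma(d/2)}\Big)^2\Big(\int_0^\infty\tfrac{\pi(d\lambda)}{\lambda^{2d+2}}\Big)\min(t_i,t_j).
\]
The $\min$ appears because the windows are nested, $\Delta(t_i^{1/d}T)\subseteq\Delta(t_j^{1/d}T)$ for $t_i\le t_j$, so both kernels coincide with $\Psi_\infty(\lambda)$ on the bulk of the smaller window (of volume $\sim t_i T^d|\Delta|$), while the boundary shell contributes a lower order in $T$. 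The only delicate region is small $\lambda$ (range $1/\lambda\gtrsim T$), where $\Psi_T$ is capped at size $\sim|\Delta|T^d/(2\lambda)$ rather than $\Psi_\infty(\lambda)$; a direct estimate shows this region contributes $O(T^{3d+2-\alpha})$, which is $o(T^d)$ precisely because the short-range condition \eqref{eq:meas_srd} forces $\alpha>2d+2$. I expect this small-$\lambda$ and boundary-layer bookkeeping to be the main obstacle of the proof.

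With the covariance limit in hand, I would split $C_T$ using \eqref{cum_supcar_one} into its Gaussian and jump parts. The Gaussian part equals $-\frac{b}{2N_T^2}\int_0^\infty\int_{\mathbb{R}^d}\big(\sum_j\theta_j\Psi_{t_j^{1/d}T}\big)^2$, whose limit is immediate from the covariance asymptotic. For the jump part, writing $g:=N_T^{-1}\sum_j\theta_j\Psi_{t_j^{1/d}T}$ and $w=gx$, I would use the elementary bound $|e^{iw}-1-iw+\tfrac12 w^2|\le C\min(w^2,|w|^3)$: since $\int_{\mathbb{R}}x^2W(dx)<\infty$ by \eqref{eq:W_x2}, the inner integral $\int_{\mathbb{R}}(e^{igx}-1-igx)W(dx)$ equals $-\tfrac12 g^2\int x^2W(dx)$ up to a remainder dominated by $\int h(gx)\,x^2W(dx)$ with $h(w)\to0$ as $w\to0$ and $h$ bounded. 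Dominated convergence---whose dominating integral is again controlled by the very same covariance computation---makes the remainder $o(1)$, so the jump part converges to $-\tfrac12\big(\int x^2W(dx)\big)\lim_T N_T^{-2}\int\int\big(\sum_j\theta_j\Psi_{t_j}\big)^2$. Adding the two parts, the common factor $-\mathcal{K}''(0)=b+\int x^2W(dx)$ cancels against the normalisation $N_T$ and combines with $c_5$ to leave exactly the Brownian covariance $\sum_{i,j}\theta_i\theta_j\min(t_i,t_j)$. This identifies the Gaussian limit with zero mean and covariance $\min(t_i,t_j)$, which is the standard Brownian motion, completing the proof of \eqref{th1:conv_main}.
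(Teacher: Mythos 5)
Your proposal is correct and follows essentially the same route as the paper's proof: the joint cumulant function in single-integral Rajput--Rosinski form, the rescaling under which $T^d\lambda^d\int_{\Delta(t^{1/d})}e^{-T\lambda\|\bm{y}-\bm{u}\|}\,d\bm{y}\to 2\pi^{d/2}\Gamma(d)/\Gamma(d/2)$ for $\bm{u}$ interior to the window (nestedness of the $\Delta(t_i^{1/d})$ producing $\min(t_i,t_j)$), the quadratic bound $|\mathcal{K}(s)|\leq Cs^2$ from \eqref{eq:W_x2} yielding the dominating function $C\lambda^{-(2d+2)}$ integrable under \eqref{eq:meas_srd}, and dominated convergence; your explicit Gaussian/jump splitting is only a cosmetic variant of the paper's direct Taylor expansion of $\mathcal{K}$ at the origin. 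Two sketch-level points to tidy when writing it up: the paper handles the far-field region $\|\bm{u}\|>2$ separately (the constant bound $C\lambda^{-(2d+2)}$ is not $d\bm{u}$-integrable there, and one uses $e^{-x}\leq Cx^{-d}$ to get the integrable bound $C\lambda^{-(2d+2)}(\|\bm{u}\|-1)^{-2d}$), whereas you assert that only small $\lambda$ is delicate; and your explicit $O(T^{3d+2-\alpha})$ rate presumes the regular-variation density \eqref{eq:meas_lrd}, while the theorem (and the paper's DCT argument) needs only the moment condition \eqref{eq:meas_srd}.
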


\begin{proof}
The proof is based on the application of the method of cumulant functions \cite[Theorem 4.3]{kallenberg1997foundations}. According to the method of cumulant functions, convergence \eqref{th1:conv_main} holds if and only if the corresponding joint cumulant functions converge pointwise, when $T\to\infty$. Consider the following joint cumulant function
\begin{eqnarray}    
& &\hspace*{-1cm} C\left(s_1,...,s_k \ddagger \frac{1}{T^{\frac{d}{2}}}X^*\left(t_1^{\frac{1}{d}}T\right),...,\frac{1}{T^{\frac{d}{2}}}X^*\left(t_k^{\frac{1}{d}}T\right)\right) = \log E \exp\left(i \sum_{i=1}^k\frac{s_i}{T^{\frac{d}{2}}}X^*\left(t_i^{\frac{1}{d}}T\right)\right)  \nonumber\\
&=&\log E \exp\left(i \sum_{i=1}^k\frac{s_i}{T^{\frac{d}{2}}}\int\limits_{\Delta(t_i^{\frac{1}{d}}T)}X(\widetilde{\bm{y}})d\widetilde{\bm{y}}\right) \nonumber\\
&=& \log E \exp\left(-i \sum_{i=1}^k\frac{s_i}{T^{\frac{d}{2}}}\int\limits_{\Delta(t_i^{\frac{1}{d}}T)}\int\limits_{0}^{\infty}\int\limits_{\mathbb{R}^d}\frac{1}{2\lambda}e^{-\lambda||\widetilde{\bm{y}}-\widetilde{\bm{u}}||}\Lambda(d\widetilde{\bm{u}},d\lambda)d\widetilde{\bm{y}}\right) \nonumber\\
& = & \log E \exp\left(i \int\limits_{0}^{\infty}\int\limits_{\mathbb{R}^d}\left(-\sum_{i=1}^k\frac{s_i}{2T^{\frac{d}{2}}\lambda}\int\limits_{\Delta(t_i^{\frac{1}{d}}T)}e^{-\lambda||\widetilde{\bm{y}}-\widetilde{\bm{u}}||}d\widetilde{\bm{y}}\right)\Lambda(d\widetilde{\bm{u}},d\lambda)\right)\nonumber\\
&= &\int\limits_{0}^{\infty}\int\limits_{\mathbb{R}^d}\mathcal{K}\left(-\sum_{i=1}^k\frac{s_i}{2T^{\frac{d}{2}}\lambda}\int\limits_{\Delta(t_i^{\frac{1}{d}}T)}e^{-\lambda||\widetilde{\bm{y}}-\widetilde{\bm{u}}||}d\widetilde{\bm{y}}\right)d\widetilde{\bm{u}}\pi(d\lambda),\label{th1:cum_func} \end{eqnarray}  where $s_i\in\mathbb{R}, \  t_i\in[0,1], \ i=\overline{1,k}, \ k\in\mathbb{N},$ and the last representation follows from the steps analogous to those in the proof of Proposition \ref{prop:cum_func}. By the change of variables $T\bm{y}=\widetilde{\bm{y}}, \ T\bm{u}=\widetilde{\bm{u}},$ one obtains that the expression \eqref{th1:cum_func} equals~to
\[ \int\limits_{0}^{\infty}\int\limits_{\mathbb{R}^d}T^d\mathcal{K}\left(-\sum_{i=1}^k\frac{s_iT^{\frac{d}{2}}}{2\lambda}\int\limits_{\Delta(t_i^{\frac{1}{d}})}e^{-T\lambda||\bm{y}-\bm{u}||}d\bm{y}\right)d\bm{u}\pi(d\lambda) \]
\[ =\int\limits_{0}^{\infty}\int\limits_{B(2)}T^d\mathcal{K}\left(-\sum_{i=1}^k\frac{s_iT^{\frac{d}{2}}}{2\lambda}\int\limits_{\Delta(t_i^{\frac{1}{d}})}e^{-T\lambda||\bm{y}-\bm{u}||}d\bm{y}\right)d\bm{u}\pi(d\lambda)  \]  \begin{equation}\label{th:clt_dec}+ \int\limits_{0}^{\infty}\int\limits_{\mathbb{R}^d\setminus B(2)}T^d\mathcal{K}\left(-\sum_{i=1}^k\frac{s_iT^{\frac{d}{2}}}{2\lambda}\int\limits_{\Delta(t_i^{\frac{1}{d}})}e^{-T\lambda||\bm{y}-\bm{u}||}d\bm{y}\right)d\bm{u}\pi(d\lambda):=I_1(T) + I_2(T),\end{equation} where the splitting of the integration is motivated by geometric reasoning that will be seen later.

Let us consider the integrand in  $I_1(T).$ By $|e^{is x} - 1 - isx|\leq \frac{s^2x^2}{2},$ \eqref{cum_supcar_one} and assumption \eqref{eq:W_x2}, one obtains that 
\begin{equation}\label{eq:ks}
|\mathcal{K}(s)|\leq Cs^2.\end{equation} Thus,
\[ T^d\left|\mathcal{K}\left(-\sum_{i=1}^k\frac{s_iT^{\frac{d}{2}}}{2\lambda}\int\limits_{\Delta(t_i^{\frac{1}{d}})}e^{-T\lambda||\bm{y}-\bm{u}||}d\bm{y}\right)\right| \leq \frac{C}{\lambda^2}\left(\sum_{i=1}^ks_iT^d\int\limits_{\Delta(t_i^{\frac{1}{d}})}e^{-T\lambda||\bm{y}-\bm{u}||}d\bm{y}\right)^2\]
\begin{equation}\label{Th1:eq_asymp1}\leq \frac{C}{\lambda^{2d+2}}\left(\sum_{i=1}^ks_iT^d\lambda^d\int\limits_{\Delta(t_i^{\frac{1}{d}})}e^{-T\lambda||\bm{y}-\bm{u}||}d\bm{y}\right)^2 \leq \frac{C}{\lambda^{2d+2}},\end{equation} where the last inequality follows from the estimate
\[ T^d\lambda^d \int\limits_{\Delta(t_i^{\frac{1}{d}})}e^{-T\lambda||\bm{y}-\bm{u}||}d\bm{y} \leq \int\limits_{\mathbb{R}^d}e^{-||\bm{y}||}d\bm{y}=C<\infty. \]

As $EX(\bm{t})=0,\ \bm{t}\in\mathbb{R}^d,$  it follows from \eqref{prop:cum_func_mom} that $\mathcal{K}'(0)=0.$ By \cite[Proposition 4, p. 226]{zorich}, one gets that $\mathcal{K}(s)=-\mathcal{K}''(0)s^2+o(s^2), \ s\to0.$ Then, as for fixed $\lambda\in\mathbb{R}^+$ and $\bm{u}\in B(2)$ it holds that
\[ T^{\frac{d}{2}}\int\limits_{\Delta({t_i^{\frac{1}{d}}})} e^{-T\lambda||\bm{y}-\bm{u}||}d\bm{y}\to0,\ {\rm when} \ T\to\infty, \ i = \overline{1,k},\] we obtain that if $T\to\infty,$ then
\[T^d\mathcal{K}\left(-\sum_{i=1}^k\frac{s_iT^{\frac{d}{2}}}{2\lambda}\int\limits_{\Delta(t_i^{\frac{1}{d}})}e^{-T\lambda||\bm{y}-\bm{u}||}d\bm{y}\right)\]
\begin{equation}\label{th1:kappa_asymp}\sim -\frac{\mathcal{K}''(0)}{4\lambda^{2d+2}}\sum_{i=1}^k\sum_{j=1}^ks_i s_j{T^{2d}\lambda^{2d}}\int\limits_{\Delta(t_i^{\frac{1}{d}})}e^{-T\lambda||\bm{y}_i-\bm{u}||}d\bm{y}_i \int\limits_{\Delta(t_j^{\frac{1}{d}})}e^{-T\lambda||\bm{y}_j-\bm{u}||}d\bm{y}_j.\end{equation}

Assume that $t_i\leq t_j$ and $\bm{u}\in\Delta(t_i^\frac{1}{d}).$ Then $\bm{u}\in\Delta(t_j^\frac{1}{d}),$ and one gets that if $T\to\infty,$ then
\begin{equation}\label{intrd} {T^{2d}\lambda^{2d}}\int\limits_{\Delta(t_i^{\frac{1}{d}})}e^{-T\lambda||\bm{y}_i-\bm{u}||}d\bm{y}_i \int\limits_{\Delta(t_j^{\frac{1}{d}})}e^{-T\lambda||\bm{y}_j-\bm{u}||}d\bm{y}_j {\to} \left(\int\limits_{\mathbb{R}^d}e^{-||\bm{y}_i||}d\bm{y}_i\right)^2 = \left(\frac{2\pi^{\frac{d}{2}}\Gamma(d)}{\Gamma(\frac{d}{2})}\right)^2.\end{equation}

Now, let $t_i < t_j,$ and $\bm{u}\in\Delta(t_j^{\frac{1}{d}})\setminus \Delta(t_i^{\frac{1}{d}}).$ Denote by $\Delta_{\bm{u}}(T\lambda)$ the set $\Delta$ shifted by $\bm{u}$ and transformed homothetically with the parameter of homothety $T\lambda.$ Then,
\[ {T^{2d}\lambda^{2d}} \int\limits_{\Delta(t_i^{\frac{1}{d}})}e^{-T\lambda||\bm{y}_i-\bm{u}||}d\bm{y}_i \int\limits_{\Delta(t_j^{\frac{1}{d}})}e^{-T\lambda||\bm{y}_j-\bm{u}||}d\bm{y}_j  \] 
\[= \int\limits_{\Delta_{\bm{u}}(T\lambda t_i^{\frac{1}{d}})}e^{-||\bm{y}_i||}d\bm{y}_i \int\limits_{\Delta_{\bm{u}}(T\lambda t_j^{\frac{1}{d}})}e^{-||\bm{y}_j||}d\bm{y}_j{\to}0,\ {\rm when } \ T\to\infty,\] as $\Delta_{\bm{u}}(t_i^{\frac{1}{d}})$ does not contain the origin. Analogously, if $\bm{u}$ does not belong to $\Delta(t_j^{\frac{1}{d}}), \ t_i<t_j,$ the above integral converges to $0$. 

Thus, due to \eqref{eq:meas_srd}, \eqref{Th1:eq_asymp1} and the dominated convergence theorem it holds 
\[ \int\limits_{0}^{\infty}\frac{1}{4\lambda^{d+2}}\int\limits_{B(2)}{T^{2d}\lambda^{2d}}\int\limits_{\Delta(t_i^{\frac{1}{d}})}e^{-T\lambda||\bm{y}_i-\bm{u}||}d\bm{y}_i \int\limits_{\Delta(t_j^{\frac{1}{d}})}e^{-T\lambda||\bm{y}_j-\bm{u}||}d\bm{y}_jd\bm{u}\pi(d\lambda)\]

\[{\to}\frac{1}{4}\int\limits_0^\infty\frac{1}{\lambda^{2d+2}}\pi(d\lambda)\int\limits_{\Delta(\min\left(t_i^{\frac{1}{d}}, t_j^{\frac{1}{d}}\right))}\left(\frac{2\pi^{\frac{d}{2}}\Gamma(d)}{\Gamma(\frac{d}{2})}\right)^2d\bm{u} = \frac{c_5}{2} \min(t_i, t_j),\ {\rm when} \ T\to\infty,\] and from \eqref{th1:kappa_asymp} one obtains 
\begin{equation}\label{th1:i1lim}
I_1(T)\to-\frac{c_5\mathcal{K}''(0)}{2}\sum_{i=1}^k\sum_{j=1}^ks_is_j  \min\left( t_i, t_j\right), \ {\rm when} \  T\to\infty.
\end{equation}

Let us consider the integrand in $I_2(T).$ By the first inequality in \eqref{Th1:eq_asymp1}
\begin{equation*} T^d\left|\mathcal{K}\left(-\sum_{i=1}^k\frac{s_iT^{\frac{d}{2}}}{2\lambda}\int\limits_{\Delta(t_i^{\frac{1}{d}})}e^{-T\lambda||\bm{y}-\bm{u}||}d\bm{y}\right)\right| \leq \frac{C}{\lambda^{2d+2}}\left(\sum_{i=1}^ks_iT^d\lambda^d\int\limits_{\Delta(t_i^{\frac{1}{d}})}e^{-T\lambda||\bm{y}-\bm{u}||}d\bm{y}\right)^2.\end{equation*}  As $s_i,\ i=\overline{1,k},$ are fixed, $\bm{u}\in\mathbb{R}^d\setminus B(2)$ and $\Delta(t_i^\frac{1}{d})\subseteq B(1),$ $t_i\in[0,1],\ i=\overline{1,k},$ the right-hand side of the above inequality is bounded~by
\begin{equation}\label{th1:eq2_bound} \frac{C}{\lambda^{2d+2}}\left(\sum_{i=1}^ks_iT^d\lambda^de^{-T\lambda(||\bm{u}||-1)}\right)^2\leq\frac{C}{\lambda^{2d+2}}(T\lambda)^{2d}e^{-2T\lambda(||\bm{u}||-1)}.\end{equation} 
As $e^{-x} \leq Cx^{-d}, \ x\geq0,$ for some $C>0,$ one gets that the right-hand side of \eqref{th1:eq2_bound} can be bounded~by
\[ \frac{C}{\lambda^{2d+2}} \cdot \frac{1}{(||\bm{u}||-1)^{2d}},\] where $||\bm{u}||\geq \sqrt{2}.$ Thus, the integrand in $I_2(T)$ is bounded by the integrable function.

It holds
\[  T^d\left|\mathcal{K}\left(-\sum_{i=1}^k\frac{s_iT^{\frac{d}{2}}}{2\lambda}\int\limits_{\Delta(t_i^{\frac{1}{d}})}e^{-T\lambda||\bm{y}-\bm{u}||}d\bm{y}\right)\right| {\to} 0, \ {\rm when}\ T\to0.\] Therefore, by applying the dominated convergence theorem, one obtains that $I_2(T)\to 0,$ when $T\to\infty.$ 

Thus, from \eqref{th:clt_dec} and \eqref{th1:i1lim} and as $I_2(T)\to0,$ when $T\to\infty,$ it follows
\[ C\left(s_1,...,s_k \ddagger \frac{1}{T^{\frac{d}{2}}}X^*(t_1^{\frac{1}{d}}T),...,\frac{1}{T^{\frac{d}{2}}}X^*(t_k^{\frac{1}{d}}T)\right) {\to}  -\frac{c_5\mathcal{K}''(0)}{2}\sum_{i=1}^k\sum_{j=1}^ks_is_j  \min\left( t_i^{\frac{1}{d}}, t_j^{\frac{1}{d}}\right), \ T\to\infty,\] which is the joint cumulant function of the standard Brownian motion.
\end{proof}

Now, let us consider a supCAR field $X(\cdot)$ with the characteristic quadruple $(0,b,W,\pi)_1$ such that $b>0$ (i.e. the underlying CAR(1) fields contain Gaussian components with variances $b>0$), and measure $\pi(\cdot)$ satisfying \eqref{eq:meas_lrd} with $\alpha\in(d+2,2d+2),$ which implies the long-range dependence of $X(\cdot)$ by Proposition \ref{prop:corasym}. The next Theorem shows that under the above assumptions, the integrated supCAR process converges to the generalised Brownian motion, see~\cite[Section~2.10]{leonenko1999}. 

\begin{theorem}\label{th:herm}
Let $b>0$ and the measure $\pi(\cdot)$ satisfies \eqref{eq:meas_lrd} with $\alpha\in(d+2,2d+2).$ Then, for $t\in[0,1]$
\begin{equation}\label{eq:gen_herm} \frac{1}{\sqrt{c_6T^{3d+2-\alpha}l(T)}}X^*(t^\frac{1}{d}T) \overset{fdd}{{\to}} \int_{\mathbb{R}^d}' |\bm{y}|^{\frac{\alpha-2d-2}{2}}\int_{\Delta(t^{\frac{1}{d}})}e^{i(\bm{y},\bm{x})}d\bm{x}B(d\bm{y}), \ {when }\ T\to\infty,\end{equation} where the first integral is understood in the Wiener-It\^{o} sense {\rm \cite[Section 2.9]{leonenko1999}}, $B(\cdot)$  is a Gaussian white noise on $\mathbb{R}^d$ and $c_6 := c_1^2 b  \int\limits_0^\infty\frac{\lambda^{\alpha-1}}{(1+\lambda^2)^{d+1}}d\lambda.$
\end{theorem}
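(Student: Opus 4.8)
The plan is to apply the cumulant method exactly as in the proof of Theorem \ref{th:brow}: by \cite[Theorem 4.3]{kallenberg1997foundations} it suffices to show that the joint cumulant function of the normalised vector converges pointwise, as $T\to\infty$, to the joint cumulant function of the Gaussian limit. Since that limit is a vector of Wiener--It\^o integrals $Z_i=\int_{\mathbb{R}^d}'|\bm{y}|^{(\alpha-2d-2)/2}\int_{\Delta(t_i^{1/d})}e^{i(\bm{y},\bm{x})}d\bm{x}\,B(d\bm{y})$, its cumulant function is the quadratic form $-\tfrac12\mathrm{Var}(\sum_i s_iZ_i)$. Starting from representation \eqref{eq:supcar_cumj} and performing the change of variables $T\bm{y}=\widetilde{\bm{y}}$, $T\bm{u}=\widetilde{\bm{u}}$ as in \eqref{th1:cum_func}, I obtain an integral of $T^d\mathcal{K}(\cdot)$ against $d\bm{u}\,\pi(d\lambda)$.

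The decisive difference from the short-range case is the scale of $\lambda$ that matters. Substituting $\lambda=\mu/T$ and using $p(\lambda)=l(1/\lambda)\lambda^{\alpha-1}$ from \eqref{eq:meas_lrd} with $l(T/\mu)\sim l(T)$, one sees that the argument of $\mathcal{K}$ is of order $T^{(\alpha-d)/2}/\sqrt{l(T)}\to\infty$ (since $\alpha>d$), in contrast to Theorem \ref{th:brow} where it tended to $0$. Hence the local Taylor expansion $\mathcal{K}(s)=\tfrac{\mathcal{K}''(0)}{2}s^2+o(s^2)$ used there is irrelevant; the behaviour of $\mathcal{K}$ at infinity governs the limit.

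The main step, and the principal obstacle, is therefore to establish that
\[ \mathcal{K}(s)=-\tfrac{b}{2}s^2+\int_{\mathbb{R}}(e^{isx}-1-isx)W(dx)\sim-\tfrac{b}{2}s^2,\qquad|s|\to\infty. \]
Equivalently, the jump part is $o(s^2)$: splitting the integral at $|x|=1/|s|$, bounding $|e^{isx}-1-isx|$ by $\tfrac12(sx)^2$ on $\{|x|\le1/|s|\}$ and by $2+|sx|$ on $\{|x|>1/|s|\}$, and combining $\int_{\mathbb{R}}x^2W(dx)<\infty$ (assumption \eqref{eq:W_x2}) with the elementary facts $\varepsilon^2W(\{|x|>\varepsilon\})\to0$ and $\varepsilon\int_{|x|>\varepsilon}|x|W(dx)\to0$ as $\varepsilon\to0$, one gets $\mathcal{K}_{\mathrm{jump}}(s)/s^2\to0$. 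This is precisely why only the Gaussian variance $b$, and not $-\mathcal{K}''(0)=b+\int_{\mathbb{R}}x^2W(dx)$, enters the normalising constant $c_6$, distinguishing this regime from the short-range one.

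With this asymptotic in hand I replace $\mathcal{K}(A_T)$ by $-\tfrac{b}{2}A_T^2$ inside the integral, where $A_T$ denotes the (rescaled) argument. Because $A_T$ is linear in $(s_1,\dots,s_k)$, the resulting expression is automatically a quadratic form, so all cumulants of order $\ge3$ vanish with no separate argument. The passage to the limit is justified by dominated convergence: the global bound $|\mathcal{K}(s)|\le Cs^2$ from \eqref{eq:ks}, together with Potter's inequality \cite[Theorem 1.5.6]{bingham1989regular} for $l$, yields a $T$-independent dominating function $C\mu^{\alpha-1}\max(\mu^{-\delta},\mu^{\delta})\Phi(\bm{u},\mu)^2$, where $\Phi(\bm{u},\mu)=\sum_i\frac{s_i}{2\mu}\int_{\Delta(t_i^{1/d})}e^{-\mu\|\bm{y}-\bm{u}\|}d\bm{y}$; its integrability over $(\bm{u},\mu)$ is exactly where $\alpha<2d+2$ (finiteness at $\mu\to\infty$) and $\alpha>d+2$ enter. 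Finally, I identify the limit $-\frac{b}{2c_6}\int_0^\infty\mu^{\alpha-1}\int_{\mathbb{R}^d}\Phi^2\,d\bm{u}\,d\mu$ with the covariance of the stated Wiener--It\^o integral: Parseval's identity and the transform $\mathcal{F}_d[g_\lambda](\bm{\omega})=c_1(\|\bm{\omega}\|^2+\lambda^2)^{-(d+1)/2}$ from \eqref{th3:fourier} convert the $\bm{u}$-convolution and the $\mu$-integral into $c_1^2\|\bm{\omega}\|^{\alpha-2d-2}\int_0^\infty\nu^{\alpha-1}(1+\nu^2)^{-(d+1)}d\nu$, and the choice of $c_6$ makes the overall constant equal to $1$ (up to the spectral normalisation convention of \cite{leonenko1999}), giving $-\tfrac12\mathrm{Var}(\sum_i s_iZ_i)$ as required.
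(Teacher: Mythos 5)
Your proposal is correct, but it takes a genuinely different route from the paper's proof. The paper decomposes the random measure as $\Lambda=\Lambda_1+\Lambda_2$ with characteristic quadruples $(0,b,0,\pi)_1$ and $(0,0,W,\pi)_1$, hence $X=X_1+X_2$ with $X_1$ a purely Gaussian long-range dependent field and $X_2$ an independent pure-jump field; the limit for the normalised integrated $X_1$ is then imported from the known non-central limit theorem for Gaussian fields \cite[Theorem 5]{leonenko2014sojourn}, using the spectral density asymptotics of Proposition \ref{prop:corasym} with $\mathcal{K}_1''(0)=-b$, while the cumulant method is used only to show that the normalised integrated $X_2$ vanishes (there the key fact is $\mathcal{K}_2(s)/s^2\to 0$ as $|s|\to\infty$, obtained by dominated convergence in the $W$-integral, with $|\mathcal{K}_2(T,\lambda,\bm{u})|$ bounded via \eqref{eq:ks}). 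You instead keep the field whole and run a single cumulant computation, in which the asymptotic $\mathcal{K}(s)\sim-\tfrac{b}{2}s^2$ at infinity plays exactly the role of the paper's vanishing jump contribution — your splitting of the jump integral at $|x|=1/|s|$ proves the same $o(s^2)$ fact the paper gets by dominated convergence — and your dominating function and the window $\alpha\in(d+2,2d+2)$ enter through the same estimates of $\int_{\mathbb{R}^d}\bigl(\int_{B(1)}e^{-\lambda\|\bm{y}-\bm{u}\|}d\bm{y}\bigr)^2d\bm{u}$ (of order $\lambda^{-d}$ as $\lambda\to0$ and $\lambda^{-2d}$ as $\lambda\to\infty$) that appear verbatim in the paper's treatment of $I_{i,j}(T)$. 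The trade-off: the paper's decomposition makes the structural statement ``the Gaussian component asymptotically dominates the jump component'' explicit and outsources the delicate identification of the limit (Wiener--It\^o representation, slowly varying function, exact constant) to the cited theorem, whereas your argument is self-contained but must re-derive that identification through the Parseval step with \eqref{th3:fourier} and the substitution $\lambda=\nu\|\bm{\omega}\|$ — this is the one place you are slightly loose (``up to the spectral normalisation convention''), and to fully close it you would need to fix the Fourier convention consistently with \eqref{th3:conv}--\eqref{th3:fourier} and verify that $c_6$ makes the limiting quadratic form equal to $-\tfrac{1}{2}\mathrm{Var}\bigl(\sum_i s_i Z_i\bigr)$, cross-covariances included (the mixed terms $\widehat{\mathbb{1}}_{\Delta_i}\overline{\widehat{\mathbb{1}}_{\Delta_j}}$ are real after symmetrisation in $\bm{\omega}$, so this does work out). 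Your observation that only $b$, and not $-\mathcal{K}''(0)=b+\int_{\mathbb{R}}x^2W(dx)$, enters $c_6$ — because the limit is governed by $\mathcal{K}$ at infinity rather than near the origin as in Theorem \ref{th:brow} — is precisely the correct mechanism, and corresponds in the paper to the jump field's normalised cumulants vanishing.
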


\begin{remark}
The generalised  Brownian motion in \eqref{eq:gen_herm}, which is a particular case of generalised Hermite processes~{\rm (\cite{DonhauzerOlenko2021, leonenko2014sojourn})}, appears as the limit of integrated purely Gaussian long-range dependent fields, see {\rm \cite[Theorem 2.10.2]{leonenko1999}}. Thus, Theorem {\rm\ref{th:herm}} shows that under the specified assumptions the integrated Gaussian component of the supCAR field asymptotically dominates its integrated jump component.
\end{remark}

\begin{proof}

The random measure $\Lambda(\cdot)$ can be decomposed as $\Lambda_1(\cdot) + \Lambda_2(\cdot),$ where the random measures $\Lambda_1(\cdot)$ and $\Lambda_2(\cdot)$ are independent and have characteristic quadruples $(0,b,0,\pi)_1,$ $(0,0,W,\pi)_1$ and cumulant functions \eqref{cum_supcar_one} $\mathcal{K}_1 = -\frac{b}{2}s^2,$ $\mathcal{K}_2(s) = \int_{\mathbb{R}}(e^{is x}-1-is x)W(dx)$ respectively. Thus, one can represent the supCAR field $X(\cdot)$ as
\[ X(\bm{y}) = -\int\limits_0^\infty \int\limits_{\mathbb{R}^d} \frac{1}{2\lambda}e^{-\lambda||\bm{y}-\bm{u}||}\Lambda_1(d\bm{u},d\lambda) -\int\limits_0^\infty \int\limits_{\mathbb{R}^d} \frac{1}{2\lambda}e^{-\lambda||\bm{y}-\bm{u}||}\Lambda_2(d\bm{u},d\lambda) := X_1(\bm{y})+X_2(\bm{y}),\] where $X_1(\cdot)$ and $X_2(\cdot)$ are independent random fields. 

By Proposition \ref{prop:cum_func}, the random field $X_1(\cdot)$ is strongly homogeneous, isotropic, and Gaussian. As $\mathcal{K}_1''(0) = - b,$ by Proposition \ref{prop:corasym} $X_1(\cdot)$ has a spectral density $f(\cdot)$ such that
\[f(\bm{\omega}) \sim c_6\frac{l\left(\frac{1}{||\bm{\omega}||}\right)}{||\bm{\omega}||^{2d+2-\alpha}},\ {\rm when} \ ||\bm{\omega}||\to0.\] Thus, by the non-central limit theorem for long-range dependent Gaussian fields \cite[Theorem 5]{leonenko2014sojourn}, one gets
\begin{equation}\label{th2:gauss} \frac{1}{\sqrt{c_6T^{3d+2-\alpha}l(T)}}\int_{\Delta(t^{\frac{1}{d}}T)} X_1(\bm{y})d\bm{y} \overset{fdd}{\underset{T\to\infty}{\to}} \int_{\mathbb{R}^d}' |\bm{y}|^{\frac{\alpha-2d-2}{2}}\int_{\Delta(t^{\frac{1}{d}})}e^{i(\bm{y},\bm{x})}d\bm{x}B(d\bm{y}).\end{equation} 
Now, let us consider the following joint cumulant function
\[ C\left(s_1,...,s_k\ddagger \frac{1}{\sqrt{T^{3d+2-\alpha}l(T)}}\int\limits_{\Delta(t_1^\frac{1}{d}T)} X_2(\widetilde{\bm{y}})d\widetilde{\bm{y}},...,\frac{1}{\sqrt{T^{3d+2-\alpha}l(T)}}\int\limits_{\Delta(t_k^\frac{1}{d}T)} X_2(\widetilde{\bm{y}})d\widetilde{\bm{y}}\right)\]
\begin{equation}\label{th2:cumjump} = \int\limits_0^\infty \int\limits_{\mathbb{R}^d}\mathcal{K}_2\left( -\frac{1}{\sqrt{T^{3d+2-\alpha}l(T)}}\sum\limits_{i=1}^k\frac{s_i}{2\widetilde{\lambda}}\int\limits_{\Delta(t_i^{\frac{1}{d}}T)}e^{-\widetilde{\lambda}||\widetilde{\bm{y}}-\widetilde{\bm{u}}||}d\widetilde{\bm{y}} \right)d\widetilde{\bm{u}} \pi(d\widetilde{\lambda}):=I_2(T).\end{equation}
By the change of variables $\widetilde{\bm{y}}=\bm{y}T,\ \widetilde{\bm{u}}=\bm{u}T$ and $\widetilde{\lambda}=\lambda/T,$ the integral \eqref{th2:cumjump} equals to
\begin{equation}\label{th2:cumjump2}  \int\limits_0^\infty \int\limits_{\mathbb{R}^d}T^{d-\alpha}\mathcal{K}_2\left( -\frac{T^{\frac{\alpha-d}{2}}}{\sqrt{l(T)}}\sum\limits_{i=1}^k\frac{s_i}{2\lambda}\int\limits_{\Delta(t_i^{\frac{1}{d}})}e^{-\lambda||\bm{y}-\bm{u}||}d\bm{y} \right)d\bm{u} \lambda^{\alpha-1}l\left( \frac{T}{\lambda} \right) d\lambda. \end{equation} Let us denote
\[ \mathcal{K}_2(T, \lambda, \bm{u})  := \frac{\mathcal{K}_2\left( -\frac{T^{\frac{\alpha-d}{2}}}{\sqrt{l(T)}}\sum\limits_{i=1}^k\frac{s_i}{2\lambda}\int\limits_{\Delta(t_i^{\frac{1}{d}})}e^{-\lambda||\bm{y}-\bm{u}||}d\bm{y} \right)}{\left(  \frac{T^{\frac{\alpha-d}{2}}}{\sqrt{l(T)}}\sum\limits_{i=1}^k\frac{s_i}{2\lambda}\int\limits_{\Delta(t_i^{\frac{1}{d}})}e^{-\lambda||\bm{y}-\bm{u}||}d\bm{y} \right)^2}.\] 

Note that \eqref{th2:cumjump2} equals to
\[ \frac{1}{4} \sum\limits_{i=1}^k\sum\limits_{j=1}^k s_is_j\int\limits_0^\infty \int\limits_{\mathbb{R}^d} \mathcal{K}_2(T, \lambda, \bm{u})\int\limits_{\Delta(t_i^{\frac{1}{d}})}e^{-\lambda||\bm{y}_i-\bm{u}||}d\bm{y}_i \int\limits_{\Delta(t_j^{\frac{1}{d}})}e^{-\lambda||\bm{y}_j-\bm{u}||}d\bm{y}_j d\bm{u} \lambda^{\alpha-3}\frac{l\left( \frac{T}{\lambda} \right)}{l(T)}d\lambda\] \begin{equation}\label{th2:Iij}:= \frac{1}{4} \sum\limits_{i=1}^k\sum\limits_{j=1}^k s_is_jI_{i,j}(T).\end{equation} Now, let us show that $I_{i,j}(T)\to0, \ i,j=1,...,k,$ as $T\to\infty.$ Note that
\[  \int\limits_{\mathbb{R}^d} \int\limits_{\Delta(t_i^{\frac{1}{d}})}e^{-\lambda||\bm{y}_i-\bm{u}||}d\bm{y}_i \int\limits_{\Delta(t_j^{\frac{1}{d}})}e^{-\lambda||\bm{y}_j-\bm{u}||}d\bm{y}_j d\bm{u}\leq \int\limits_{\mathbb{R}^d} \left(\int\limits_{B(1)}e^{-\lambda||\bm{y}-\bm{u}||}d\bm{y}\right)^2d\bm{u}\]

\[ \leq  \int\limits_{B(2)} \left(\int\limits_{B(1)}e^{-\lambda||\bm{y}-\bm{u}||}d\bm{y}\right)^2d\bm{u} + \int\limits_{\mathbb{R}^d\setminus B(2)} \left(\int\limits_{B(1)}e^{-\lambda||\bm{y}-\bm{u}||}d\bm{y}\right)^2d\bm{u} \]
\[ \leq C\left(\int\limits_{B(1)}e^{-\lambda||\bm{y}||}d\bm{y}  \right)^2 +  C\int\limits_{\mathbb{R}^d\setminus B(2)} e^{-2\lambda(||\bm{u}||-1)}d\bm{u}.\] By using the $d$-dimensional spherical coordinates, the above expression can be bounded~by
\[  C\left(\int\limits_0^1s^{d-1} e^{-\lambda s} ds  \right)^2 + C\int\limits_2^\infty u^{d-1} e^{-2\lambda(u-1)} du  = \frac{C}{\lambda^{2d}} \left( \int\limits_0^\lambda s^{d-1} e^{-s} ds  \right)^2 \]
\[+\ \frac{Ce^{2\lambda}}{\lambda^d}\int\limits_{4\lambda}^{\infty} u^{d-1}e^{-u} du\ \sim \
    \begin{cases}
      \frac{1}{\lambda^d}, \ {\rm if}\ \lambda\to0,\\
      \frac{1}{\lambda^{2d}}, \ {\rm if}\ \lambda\to\infty.
    \end{cases}\
\] Note, that by \eqref{eq:ks} the function $|\mathcal{K}_2(T, \lambda, \bm{u})|$ is bounded. Thus, as $\alpha\in(d+2,2d+2),$ and by Potter's inequality for any $\delta>0$ it holds $l\left( \frac{T}{\lambda} \right)/l(T) \leq C\max\{ \lambda^{-\delta},\lambda^\delta \},$ the integrand of the external integral in $I_{i,j}(T), \ i,j=\overline{1,k},$ is bounded by the integrable function. Due to the dominated convergence theorem for fixed $\lambda\in\mathbb{R}^+$ and $\bm{u}\in\mathbb{R}^d,$ it holds $\mathcal{K}_2(T, \lambda, \bm{u})\to0,$ when $ T\to\infty.$ By applying the dominated converge theorem, one obtains that $I_{i,j}(T)\to0, $ when $ T\to\infty, \ i,j=\overline{1,k},$ from which follows that the expression in \eqref{th2:Iij} and the integral $I_2(T)$ converge to $0,$ when $T\to\infty.$ Combining this result with \eqref{th2:gauss}, finishes the proof.
\end{proof}

The cumulant function of a $\gamma$-stable random variable $Z$ with $EZ=0$ and $\gamma\in(1,2)$ allows the following representation \cite[Proof of Theorem 2.2.2]{ibragimov1971independent}

\begin{equation}\label{cum_gamma} C(s\ddagger Z) = -|s|^{\gamma} \omega(s; \gamma, \widetilde{c}_1, \widetilde{c}_2), \end{equation} where
\begin{equation*} \omega(s; \gamma, \widetilde{c}_1, \widetilde{c}_2):=
    \begin{cases}
      \frac{\Gamma(2-\gamma)}{1-\gamma}\left((\widetilde{c}_1+\widetilde{c}_2)\cos\left( \frac{\pi\gamma}{2} \right) -i(\widetilde{c}_1-\widetilde{c}_2)\sign(s)\sin\left( \frac{\pi\gamma}{2} \right)  \right), \ {\rm if} \ \gamma\neq1,\\
     \widetilde{c}_1\pi,\ {\rm if} \ \gamma=1,
    \end{cases}\
\end{equation*} with $\widetilde{c}_1,\widetilde{c}_2\geq0$ and $\widetilde{c}_1=\widetilde{c}_2$ if $\gamma=1.$

Let us consider a supCAR field $X(\cdot)$ with the characteristic quadruple $(0,0,W,\pi)_1$ ($b=0$ implies no Gaussian component in the underlying CAR(1) fields) and $\alpha\in(d+1,2d+2)$ (note that by \eqref{prop:cum_func_mom} for $\alpha\in(d+1,d+2)$ the second moments of $X(\cdot)$ are infinite). In this case, the cumulant function $\mathcal{K}(\cdot)$ equals
\begin{equation*}
\mathcal{K}(s) = \int\limits_\mathbb{R}(e^{is x} - 1 - isx)W(dx). 
\end{equation*}  Then, two possible limit behaviours are possible depending on the behaviour of the L\'evy measure $W(\cdot)$ at the~origin.

\begin{theorem}\label{th6}
Let $b=0,$ the measure $\pi(\cdot)$ satisfy \eqref{eq:meas_lrd} with $\alpha\in(d+1,2d+2),$ and, for some positive $c^+$ and $c^-,$ the L\'evy measure $W(\cdot)$ have the Blumenthal-Getoor index $\alpha/(d+1)<\beta_{BG}<\min(2, \alpha-d)$ such that
\[ W([x,\infty))\sim c^+x^{-\beta_{BG}} \ {\rm and} \ \ W((-\infty, -x])\sim c^-x^{-\beta_{BG}}, \ {\rm when} \ x\to0+.\] Then, for $t\in[0,1],$ when $T\to\infty,$ it holds
\begin{equation}\label{th3:eq_main} \frac{1}{T^{d+1-\frac{\alpha-d}{\beta_{BG}}}l(T)^\frac{1}{\beta_{BG}}}X^*(t^\frac{1}{d}T) \overset{fdd}{{\to}} \int\limits_0^\infty \int\limits_{\mathbb{R}^d}\left(-\int\limits_{\Delta(t^\frac{1}{d})} \frac{1}{2\lambda}e^{-\lambda||\bm{y}-\bm{u}||}d\bm{y}\right)K(d\bm{u},d\lambda),\end{equation} where $K(\cdot, \cdot)$ is an independently scattered scalar random measure on $\mathcal{S}'$ such that
\begin{equation*} C(s\ddagger K(d\bm{u} \times d\lambda)) =  -|s|^{\beta_{BG}} \omega(s;\beta_{BG},c^+, c^-)\lambda^{\alpha-1}d\bm{u} d\lambda.\end{equation*}
\end{theorem}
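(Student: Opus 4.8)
The plan is to apply the cumulant method \cite[Theorem 4.3]{kallenberg1997foundations}, exactly as in the proofs of Theorems \ref{th:brow} and \ref{th:herm}: it suffices to show that the joint cumulant function of the normalised vector $\big(a_T^{-1}X^*(t_1^{1/d}T),\dots,a_T^{-1}X^*(t_k^{1/d}T)\big)$, with $a_T:=T^{d+1-\frac{\alpha-d}{\beta_{BG}}}l(T)^{1/\beta_{BG}}$, converges pointwise as $T\to\infty$ to the joint cumulant function of the stable integral on the right-hand side of \eqref{th3:eq_main}. Starting from \eqref{eq:supcar_cumj} and repeating the manipulations leading to \eqref{th1:cum_func}, this cumulant function equals $\int_0^\infty\int_{\mathbb{R}^d}\mathcal{K}\big(-a_T^{-1}\sum_i \frac{s_i}{2\widetilde\lambda}\int_{\Delta(t_i^{1/d}T)}e^{-\widetilde\lambda\|\widetilde{\bm y}-\widetilde{\bm u}\|}d\widetilde{\bm y}\big)\,d\widetilde{\bm u}\,\pi(d\widetilde\lambda)$. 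Applying the scaling $\widetilde{\bm y}=T\bm y$, $\widetilde{\bm u}=T\bm u$, $\widetilde\lambda=\lambda/T$ as in Theorem \ref{th:herm} and using \eqref{eq:meas_lrd}, I would rewrite it as
\[ \int_0^\infty\int_{\mathbb{R}^d}\mathcal{K}\big(-b_T\,g(\bm u,\lambda)\big)\,T^{d-\alpha}l(T/\lambda)\lambda^{\alpha-1}\,d\bm u\,d\lambda, \]
where $b_T:=T^{d+1}/a_T=\big(T^{\alpha-d}/l(T)\big)^{1/\beta_{BG}}\to\infty$ and $g(\bm u,\lambda):=\sum_{i=1}^k\frac{s_i}{2\lambda}\int_{\Delta(t_i^{1/d})}e^{-\lambda\|\bm y-\bm u\|}d\bm y$.

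The decisive ingredient is the behaviour of $\mathcal{K}$ at infinity, which is controlled by the mass of $W$ near the origin. From $W([x,\infty))\sim c^+x^{-\beta_{BG}}$ and $W((-\infty,-x])\sim c^-x^{-\beta_{BG}}$ as $x\to0+$, a regular-variation (Tauberian) computation of the same type that underlies the stable representation \eqref{cum_gamma}, see \cite[Proof of Theorem 2.2.2]{ibragimov1971independent} and the one-dimensional treatment in \cite{grahovac2019limit}, yields
\[ \mathcal{K}(s)\sim-|s|^{\beta_{BG}}\,\omega(s;\beta_{BG},c^+,c^-),\qquad |s|\to\infty. \]
Since $b_T\to\infty$ and $b_T>0$, for each fixed $(\bm u,\lambda)$ with $g(\bm u,\lambda)\neq0$ this gives $\mathcal{K}(-b_Tg)\sim-b_T^{\beta_{BG}}|g|^{\beta_{BG}}\omega(-g;\beta_{BG},c^+,c^-)$. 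Multiplying by the prefactor and using $T^{d-\alpha}b_T^{\beta_{BG}}=1/l(T)$ together with the slow variation $l(T/\lambda)/l(T)\to1$, the integrand converges pointwise to $-|g(\bm u,\lambda)|^{\beta_{BG}}\omega(-g(\bm u,\lambda);\beta_{BG},c^+,c^-)\lambda^{\alpha-1}$.

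To pass to the limit under the integral I would exhibit a uniform integrable dominating function. Splitting $\mathcal{K}$ at $|x|=|s|^{-1}$ and combining $\int x^2W(dx)<\infty$ with the origin tails of $W$ gives $|\mathcal{K}(s)|\le Cs^2$ for $|s|\le1$ and $|\mathcal{K}(s)|\le C|s|^{\beta_{BG}}$ for $|s|\ge1$; since $\beta_{BG}<2$, treating the two regimes $b_T|g|\ge1$ and $b_T|g|<1$ separately yields the single scale-covariant bound $|\mathcal{K}(-b_Tg)|\le Cb_T^{\beta_{BG}}|g|^{\beta_{BG}}$ for all large $T$. Hence the integrand is dominated by $C\,\frac{l(T/\lambda)}{l(T)}|g(\bm u,\lambda)|^{\beta_{BG}}\lambda^{\alpha-1}$, and Potter's inequality \cite[Theorem 1.5.6]{bingham1989regular} bounds $l(T/\lambda)/l(T)\le C\max\{\lambda^{-\delta},\lambda^{\delta}\}$ for small $\delta>0$. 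Finiteness of $\int_0^\infty\int_{\mathbb{R}^d}\max\{\lambda^{-\delta},\lambda^{\delta}\}|g(\bm u,\lambda)|^{\beta_{BG}}\lambda^{\alpha-1}\,d\bm u\,d\lambda$ is precisely where the range $\alpha/(d+1)<\beta_{BG}<\min(2,\alpha-d)$ is used: performing the $\bm u$-integral shows the $\lambda$-integrand behaves like $\lambda^{\alpha-1-\beta_{BG}-d}$ near $\lambda=0$, integrable iff $\beta_{BG}<\alpha-d$, and like $\lambda^{\alpha-1-(d+1)\beta_{BG}}$ near $\lambda=\infty$, integrable iff $\beta_{BG}>\alpha/(d+1)$, with the strict inequalities leaving room to absorb $\delta$. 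The dominated convergence theorem then produces the limit $\int_0^\infty\int_{\mathbb{R}^d}-|g|^{\beta_{BG}}\omega(-g;\beta_{BG},c^+,c^-)\lambda^{\alpha-1}\,d\bm u\,d\lambda$. Finally, writing $h_i(\bm u,\lambda):=-\int_{\Delta(t_i^{1/d})}\frac{1}{2\lambda}e^{-\lambda\|\bm y-\bm u\|}d\bm y$, the Rajput-Rosinski formula \cite{rajput1989spectral} for integrals against $K$ gives the joint cumulant function of the right-hand side of \eqref{th3:eq_main} as $\int_0^\infty\int_{\mathbb{R}^d}-\big|\sum_i s_ih_i\big|^{\beta_{BG}}\omega\big(\sum_i s_ih_i;\beta_{BG},c^+,c^-\big)\lambda^{\alpha-1}\,d\bm u\,d\lambda$, and since $\sum_i s_ih_i=-g$ this coincides with the limit just obtained, which completes the argument.

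I expect the main obstacle to be the sharp control of $\mathcal{K}$ at infinity and its uniform integrability: converting the origin-tail hypothesis on $W$ into both the precise equivalence $\mathcal{K}(s)\sim-|s|^{\beta_{BG}}\omega(s;\beta_{BG},c^+,c^-)$ and the clean scale-covariant bound $|\mathcal{K}(-b_Tg)|\le Cb_T^{\beta_{BG}}|g|^{\beta_{BG}}$, while simultaneously tracking the slowly varying factor $l(T/\lambda)/l(T)$ through Potter's inequality, is the delicate part, and it is exactly there that the constraints on $\beta_{BG}$ are consumed.
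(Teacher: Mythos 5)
Your proposal is correct and follows essentially the same route as the paper's proof: the cumulant method with the scaling $\widetilde{\bm y}=T\bm y$, $\widetilde{\bm u}=T\bm u$, $\widetilde\lambda=\lambda/T$, the stable asymptotics of $\mathcal{K}$ at infinity (your equivalence $\mathcal{K}(s)\sim-|s|^{\beta_{BG}}\omega(s;\beta_{BG},c^+,c^-)$ is exactly the scaled limit the paper imports from \cite[Eq.~(41)]{philippe2014contemporaneous}), domination via $|\mathcal{K}(s)|\le C|s|^{\beta_{BG}}$ combined with Potter's inequality, and the same $\bm u$-integral asymptotics $\lambda^{-\beta_{BG}-d}$ at $0$ and $\lambda^{-(d+1)\beta_{BG}}$ at $\infty$ that consume the window $\alpha/(d+1)<\beta_{BG}<\min(2,\alpha-d)$, followed by the Rajput--Rosinski identification of the limit cumulant. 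The only deviation is cosmetic: you derive the global bound on $\mathcal{K}$ by splitting at $|x|=|s|^{-1}$ instead of citing \cite[Eq.~(42)]{philippe2014contemporaneous}, which is valid under the stated tail hypotheses since $\beta_{BG}<2$.
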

\begin{proof} Let us consider the cumulant function
\[ C\left(s_1,...,s_k \ddagger \frac{1}{T^{d+1-\frac{\alpha-d}{\beta_{BG}}}l(T)^\frac{1}{\beta_{BG}}}X^*(t_1^{\frac{1}{d}}T),...,\frac{1}{T^{d+1-\frac{\alpha-d}{\beta_{BG}}}l(T)^\frac{1}{\beta_{BG}}}X^*(t_k^{\frac{1}{d}}T)\right) \]
\begin{equation}\label{th3:cumjump} = \int\limits_0^\infty \int\limits_{\mathbb{R}^d}\mathcal{K}\left( -\frac{1}{T^{d+1-\frac{\alpha-d}{\beta_{BG}}}l(T)^\frac{1}{\beta_{BG}}}\sum\limits_{i=1}^k\frac{s_i}{2\lambda }\int\limits_{\Delta(t_i^{\frac{1}{d}}T)}e^{-\widetilde{\lambda}||\widetilde{\bm{y}}-\widetilde{\bm{u}}||}d\widetilde{\bm{y}} \right)d\widetilde{\bm{u}} \pi(d\widetilde{\lambda}). \end{equation}
By the change of variables $\widetilde{\bm{y}} = T\bm{y}, \ \widetilde{\bm{u}} = T\bm{u}, \ \widetilde{\lambda}=\lambda/T,$ one gets that \eqref{th3:cumjump} equals to
\[ \int\limits_0^\infty \int\limits_{\mathbb{R}^d}\mathcal{K}\left( -\frac{T^\frac{\alpha-d}{\beta_{BG}}}{2\lambda l(T)^\frac{1}{\beta_{BG}}}\sum\limits_{i=1}^ks_i\int\limits_{\Delta(t_i^{\frac{1}{d}})}e^{-\lambda||\bm{y}-\bm{u}||}d\bm{y} \right)T^{d-\alpha}\lambda^{\alpha-1}l\left(\frac{T}{\lambda}\right)d\bm{u} d\lambda \]
\[  = - \int\limits_0^\infty \int\limits_{\mathbb{R}^d}\left|-\sum\limits_{i=1}^k\frac{s_i}{2\lambda}\int\limits_{\Delta(t_i^{\frac{1}{d}})}e^{-\lambda||\bm{y}-\bm{u}||}d\bm{y} \right|^{\beta_{BG}}\omega\left(-\sum\limits_{i=1}^k\frac{s_i}{2\lambda}\int\limits_{\Delta(t_i^{\frac{1}{d}})}e^{-\lambda||\bm{y}-\bm{u}||}d\bm{y};\beta_{BG}, c^+,c^-\right)\] \begin{equation}\label{th3:h}\times h_T(\lambda,\bm{u}) \lambda^{\alpha-1}d\bm{u}  d\lambda,\end{equation}  where
\begin{align} h_T(\lambda,\bm{u}):=& -\left| - \frac{T^\frac{\alpha-d}{\beta_{BG}}}{2\lambda l(T)^{\frac{1}{\beta_{BG}}}}\sum\limits_{i=1}^k s_i \int_{\Delta(t_i^{\frac{1}{d}})}e^{-\lambda||\bm{y}-\bm{u}||}d\bm{y} \right|^{-\beta_{BG}} \nonumber\\ &\times \frac{\mathcal{K}\left( -\frac{T^\frac{\alpha-d}{\beta_{BG}}}{2\lambda l(T)^\frac{1}{\beta_{BG}}}\sum\limits_{i=1}^ks_i\int_{\Delta(t_i^{\frac{1}{d}})}e^{-\lambda||\bm{y}-\bm{u}||}d\bm{y}   \right)l\left(\frac{T}{\lambda }\right)}{\omega\left( - \sum\limits_{i=1}^k\frac{s_i}{2\lambda}\int_{\Delta(t_i^{\frac{1}{d}})}e^{-\lambda||\bm{y}-\bm{u}||}d\bm{y};\beta_{BG},c^+,c^- \right)l(T)}.\nonumber
\end{align}
By \cite[Eq. (41)]{philippe2014contemporaneous}, it holds
\begin{equation}\label{th3:kappa} \lim\limits_{a\to0} a \mathcal{K}(a^{-\frac{1}{\beta_{BG}}}s ) = -|s|^{\beta_{BG}}\omega(s; \beta, c^+, c^-),\ s\in\mathbb{R}.\end{equation} As $\alpha>d+1,$ by \eqref{th3:kappa}, for fixed $\lambda\in\mathbb{R}^+,\ \bm{u}\in\mathbb{R}^d,$ it holds $h_T(\lambda,\bm{u})\to1,$ when $T\to\infty.$

Let us show that one can use the dominated convergence theorem in \eqref{th3:h}. Note that $ \omega(s;\beta_{BG},c^+,c^-)$ depends only on $\sign(s),$ so its bounded. By \cite[Eq. (42)]{philippe2014contemporaneous}, it holds $|\mathcal{K}(s)|\leq C|s|^{\beta_{BG}},$ and as by Potter's inequality ${l\left(\frac{T}{\lambda}\right)}/{l(T)}\leq C\max\{\lambda^{-\delta}, \lambda^\delta\},$ one gets that $h_T(\lambda,\bm{u})\leq C\max\{\lambda^{-\delta}, \lambda^\delta\}$ for any $\delta>0.$ 

On the other hand, as $s_i\in\mathbb{R}, \ i=1,...,k,$ are fixed, $\Delta(t_i)\subset B(1), \ i=1,...,k,$ and $\beta_{BG}>1,$ one~obtains
\[ \int\limits_{\mathbb{R}^d}\left|-\sum\limits_{i=1}^k \frac{s_i}{2\lambda} \int\limits_{\Delta(t_i^{\frac{1}{d}})}e^{-\lambda||\bm{y}-\bm{u}||}d\bm{y} \right|^{\beta_{BG}}d\bm{u} \leq \frac{C}{\lambda^{\beta_{BG}}}\int\limits_{\mathbb{R}^d}\left( \int\limits_{B(1)}e^{-\lambda||\bm{y}-\bm{u}||}d\bm{y}   \right)^{\beta_{BG}}d\bm{u}\] \[= \frac{C}{\lambda^{\beta_{BG}}}\left(  \int\limits_{B(2)} \left(\int\limits_{B(1)}e^{-\lambda||\bm{y}-\bm{u}||}d\bm{y}\right)^{\beta_{BG}} d\bm{u} + \int\limits_{\mathbb{R}^d\setminus B(2)} \left(\int\limits_{B(1)}e^{-\lambda||\bm{y}-\bm{u}||}d\bm{y}\right)^{\beta_{BG}} d\bm{u} \right) \] \[\leq \frac{C}{\lambda^{\beta_{BG}}} \left(  \left(  \int\limits_0^1s^{d-1}e^{-\lambda s}ds \right)^{\beta_{BG}} +   \int\limits_2^\infty u^{d-1}e^{-\lambda(u-1)\beta_{BG}} du \right) \]
\[ = \frac{C}{\lambda^{\beta_{BG}}} \left( \frac{1}{\lambda^{d \beta_{BG}}}\left(\int\limits_0^\lambda s^{d-1}e^{-s}ds \right)^{\beta_{BG}} + \frac{e^{\lambda \beta_{BG}}}{(\beta_{BG}\lambda)^d}\int\limits_{2\beta_{BG}\lambda}^\infty u^{d-1}e^{-u}du \right) \]

\begin{equation*}\sim
    \begin{cases}
      \frac{1}{\lambda^{\beta_{BG}+d}}, \ {\rm if} \ \lambda\to0,\\
      \frac{1}{\lambda^{{\beta_{BG}(1+d)}}}, \ {\rm if} \ \lambda\to\infty.
    \end{cases}\
\end{equation*}  Thus, if $\alpha/(d+1)<\beta_{BG}<\min(2,\alpha-d)$ the dominated convergence theorem holds for the integral \eqref{th3:h} and it converges to
\[ -\int\limits_0^\infty \int\limits_{\mathbb{R}^d}\left| -\sum\limits_{i=1}^k \frac{s_i}{2\lambda} \int\limits_{\Delta(t_i^{\frac{1}{d}})}e^{-\lambda||\bm{y}-\bm{u}||}d\bm{y} \right|^{\beta_{BG}}\omega\left(  -\sum\limits_{i=1}^k \frac{s_i}{2\lambda} \int\limits_{\Delta(t_i^{\frac{1}{d}})}e^{-\lambda||\bm{y}-\bm{u}||}d\bm{y}; \beta_{BG}, c^+, c^-  \right)\lambda^{\alpha-1}d\bm{u}d\lambda,\] when $T\to\infty.$ 

Analogously to the proof of Proposition \ref{prop:cum_func}, one shows that the above expression is the joint cumulant function of the limit process in \eqref{th3:eq_main}, which finishes the proof.\end{proof}

The de Bruin conjugate \cite[Theorem 1.5.13]{bingham1989regular} of a slowly varying function $l(\cdot)$ is a unique slowly varying function $l^{\#}(\cdot)$ satisfying the conditions
\begin{equation}\label{debruinconj} l(x)l^{\#}(xl(x))\to1 \ {\rm and} \ \ l^{\#}(x)l(xl^{\#}(x))\to1,  \ {\rm when} \ x\to\infty. \end{equation}

\begin{theorem}\label{th7}
Let $b=0,$ the measure $\pi(\cdot)$ satisfy \eqref{eq:meas_lrd} with $\alpha\in(d+1,2d+2),$ and, for some positive $c^+$ and $c^-,$ the L\'evy measure $W(\cdot)$ have the Blumenthal-Getoor index $0<\beta_{BG}<{\alpha}/{(d+1)}$ such that
\[ W([x,\infty))\sim c^+x^{-\beta_{BG}}, \ {\rm and} \ \ W((-\infty, -x])\sim c^-x^{-\beta_{BG}},  \ {\rm when} \ x\to0+.\] Then, for $t\in[0,1]$ it holds true
\begin{equation*} \frac{1}{c_7^{\frac{\alpha}{d+1}}T^{\frac{d(d+1)}{\alpha}}l^{\#}(T)^\frac{d(d+1)}{\alpha}}X^*(t^\frac{1}{d}T) \overset{fdd}{{\to}} L_{\frac{\alpha}{d+1}}(t), \ { when} \ T\to\infty, \end{equation*} where $l^{\#}(\cdot)$ is the de Bruin conjugate of $l^{-\frac{1}{d}}(x^\frac{d}{\alpha}),$ and $L_{\frac{\alpha}{d+1}}(\cdot)$ is ${\frac{\alpha}{d+1}}$-stable L\'evy process such~that
\begin{equation}\label{th4:levy_cum} C\left(s\ddagger L_{\frac{\alpha}{d+1}}(t)\right) = -t|s|^{\frac{\alpha}{d+1}}\omega\left(s; \frac{\alpha}{d+1}, c^{-}_\frac{\alpha}{d+1}, c^{+}_\frac{\alpha}{d+1}\right),\end{equation}
\[ c^{-}_\frac{\alpha}{d+1} := \int\limits_{-\infty}^0|y|^{\frac{\alpha}{d+1}}W(dy), \quad c^{+}_\frac{\alpha}{d+1} := \int\limits_{0}^\infty|y|^{\frac{\alpha}{d+1}}W(dy),\quad c_7 := \frac{\pi^{\frac{d}{2}}\Gamma(d)|\Delta|^{\frac{d+1}{\alpha}}}{\alpha^{\frac{d+1}{\alpha}}\Gamma(\frac{d}{2})}.\]
\end{theorem}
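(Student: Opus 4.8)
The plan is to apply the cumulant method \cite[Theorem 4.3]{kallenberg1997foundations}, exactly as in the proofs of Theorems~\ref{th:brow}--\ref{th6}: it suffices to show that the joint cumulant function of the normalised left-hand side of the claimed convergence converges pointwise to that of the $\frac{\alpha}{d+1}$-stable L\'evy process $L_{\frac{\alpha}{d+1}}(\cdot)$. Write $\gamma:=\frac{\alpha}{d+1}$, which lies in $(1,2)$ precisely because $\alpha\in(d+1,2d+2)$, and put $A(T):=c_7^{\gamma}T^{d(d+1)/\alpha}(l^{\#}(T))^{d(d+1)/\alpha}$. Starting from \eqref{eq:supcar_cumj} and $b=0$, the joint cumulant function is
\[
C\Bigl(s_1,\dots,s_k\ddagger \tfrac{X^*(t_1^{1/d}T)}{A(T)},\dots,\tfrac{X^*(t_k^{1/d}T)}{A(T)}\Bigr)
=\int_0^\infty\!\!\int_{\mathbb{R}^d}\mathcal{K}\Bigl(-\frac{1}{A(T)}\sum_{i=1}^k\frac{s_i}{2\lambda}\int_{\Delta(t_i^{1/d}T)}e^{-\lambda||\widetilde{\bm{y}}-\widetilde{\bm{u}}||}d\widetilde{\bm{y}}\Bigr)d\widetilde{\bm{u}}\,\pi(d\lambda).
\]
Note that, contrary to Theorem~\ref{th6}, here the stability index of the limit is $\gamma=\tfrac{\alpha}{d+1}$, dictated by the power-law weight of $\pi(\cdot)$ near $\lambda=0$ rather than by $\beta_{BG}$; the condition $\beta_{BG}<\gamma$ guarantees finiteness of the constants $c^{\pm}_{\gamma}=\int_{\mathbb{R}_{\gtrless 0}}|y|^{\gamma}W(dy)$ appearing in \eqref{th4:levy_cum}, since small jumps are integrable against $|y|^{\gamma}$ when $\gamma>\beta_{BG}$ and large jumps when $\gamma<2$.

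Next I would rescale space by $\widetilde{\bm{y}}=T\bm{y}$, $\widetilde{\bm{u}}=T\bm{u}$ and set $\mu=\lambda T$, turning the inner kernel into $\frac{1}{2\mu}\int_{\Delta(t_i^{1/d})}e^{-\mu||\bm{y}-\bm{u}||}d\bm{y}$ and the weight into $T^{-\alpha}\mu^{\alpha-1}l(T/\mu)\,d\mu$. The amplitude of this kernel at interior points behaves like $C_0\mu^{-(d+1)}$ with $C_0=\pi^{d/2}\Gamma(d)/\Gamma(\tfrac{d}{2})$ (from $\int_{\mathbb{R}^d}e^{-||\bm{z}||}d\bm{z}=2\pi^{d/2}\Gamma(d)/\Gamma(\tfrac{d}{2})$), so balancing $C_0\mu^{-(d+1)}$ against $A(T)$ shows the dominant contribution concentrates at the critical scale $\mu\sim T/A(T)^{1/(d+1)}$. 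I would therefore substitute $\mu=B(T)\nu$ with $B(T):=(T^{d+1}/A(T))^{1/(d+1)}\to\infty$. The point of choosing $A(T)$ with the stated exponent and the de Bruin conjugate $l^{\#}$ is exactly that, after this substitution, the surviving slowly varying prefactor $l(T/(B(T)\nu))/(l^{\#}(T))^{d}$ tends to a finite nonzero constant; $l^{\#}$ resolves the implicit relation defining the correct power of the slowly varying part of the normalisation (cf.\ \cite[Theorem 1.5.13]{bingham1989regular}).

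For the pointwise limit of the rescaled integrand I would combine two facts. First, as $B(T)\to\infty$, the kernel asymptotics give $B(T)^{d+1}\cdot\frac{1}{2B(T)\nu}\int_{\Delta(t_i^{1/d})}e^{-B(T)\nu||\bm{y}-\bm{u}||}d\bm{y}\to C_0\nu^{-(d+1)}\mathbb{1}_{\Delta(t_i^{1/d})}(\bm{u})$, so the argument of $\mathcal{K}$ converges to $-C_0\nu^{-(d+1)}S(\bm{u})$ with $S(\bm{u}):=\sum_i s_i\mathbb{1}_{\Delta(t_i^{1/d})}(\bm{u})$. Second, the remaining $\nu$-integral $\int_0^\infty\mathcal{K}(-C_0 S(\bm{u})\nu^{-(d+1)})\nu^{\alpha-1}d\nu$ is, after $w=\nu^{-(d+1)}$, a Mellin integral $\frac{1}{d+1}\int_0^\infty\mathcal{K}(-C_0 S(\bm{u})w)w^{-\gamma-1}dw$; inserting $\mathcal{K}(s)=\int_{\mathbb{R}}(e^{isx}-1-isx)W(dx)$, applying Fubini, and rescaling $v=w|x|$ evaluates this to the $\gamma$-stable form $-\,\mathrm{const}\cdot|S(\bm{u})|^{\gamma}\omega(S(\bm{u});\gamma,c^{-}_{\gamma},c^{+}_{\gamma})$ of \eqref{cum_gamma} (the swap of $c^{+},c^{-}$ coming from the sign of the CAR kernel), the inner integral over $v$ converging precisely because $\gamma\in(1,2)$.

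Finally, I would integrate the resulting density over $\bm{u}\in\mathbb{R}^d$: since the sets $\Delta(t^{1/d})$ are homothetic and nested, the level sets of $S(\bm{u})$ produce the independent-increment structure of a L\'evy process, and the spatial integral together with $C_0$, $|\Delta|$ and the factor $\tfrac{1}{\alpha}$ from $\int_0^\infty\nu^{\alpha-1}(\cdots)\,d\nu$ assembles the constant $c_7$ and the cumulant \eqref{th4:levy_cum} of $L_{\frac{\alpha}{d+1}}(\cdot)$. The interchange of limit and integration is justified by dominated convergence, bounding $\frac{1}{2\mu}\int_{\Delta}e^{-\mu||\bm{y}-\bm{u}||}d\bm{y}$ and its powers uniformly after splitting $\mathbb{R}^d$ into $B(2)$ and its complement (as in the proofs of Theorems~\ref{th:brow} and \ref{th6}), using Potter's inequality \cite[Theorem 1.5.6]{bingham1989regular} for $l(\cdot)$ and the bound $|\mathcal{K}(s)|\le C\min(s^2,|s|^{\beta_{BG}})$ to control both small and large arguments. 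I expect the main obstacle to be this dominated-convergence step together with the de Bruin balancing: one must verify that neither the $\lambda\to0$ nor the $\lambda\to\infty$ tails contribute in the limit, so that only the critical scale $B(T)$ survives, and that the slowly varying factors cancel exactly — this is the delicate point distinguishing the $\frac{\alpha}{d+1}$-stable regime of Theorem~\ref{th7} from the $\beta_{BG}$-stable regime of Theorem~\ref{th6}.
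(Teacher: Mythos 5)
Your proposal follows essentially the same route as the paper's proof: the cumulant method, the rescaling $\widetilde{\bm{y}}=T\bm{y}$, $\widetilde{\bm{u}}=T\bm{u}$, the zoom of $\lambda$ to the critical scale $A_T^{-1/(d+1)}$ (your substitution $\mu=B(T)\nu$ with $B(T)=(T^{d+1}/A(T))^{1/(d+1)}$ is exactly the paper's $\widetilde{\lambda}=\lambda'/A_T^{1/(d+1)}$), the kernel limit $\pi^{d/2}\Gamma(d)/\Gamma(\frac{d}{2})$ times indicators, the de Bruijn conjugate cancelling the slowly varying prefactor as in \eqref{th4:eq_bruin_conv}, and the Mellin/Ibragimov evaluation $\frac{1}{d+1}\int_0^\infty\mathcal{K}(-C_0 S(\bm{u})w)w^{-\frac{\alpha}{d+1}-1}dw$ yielding the stable cumulant, matching \eqref{th4:lim}--\eqref{th4:eqint}. (The paper applies Cram\'er--Wold to increments so that the annuli $\Delta(t_{i+1}^{1/d})\setminus\Delta(t_i^{1/d})$ are disjoint; your "level sets of $S(\bm{u})$" remark is the same device in equivalent form.)

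However, one tool you name is wrong and the step using it would fail: the bound $|\mathcal{K}(s)|\le C\min(s^2,|s|^{\beta_{BG}})$ does not hold. Under the assumed tail equivalence $W([x,\infty))\sim c^+x^{-\beta_{BG}}$ one has $\int_{|y|\le1}|y|^{\beta_{BG}}W(dy)=\infty$, so even for $\beta_{BG}>1$ the exponent must be taken strictly above $\beta_{BG}$; worse, when $\beta_{BG}\le1$ the compensator forces linear growth, since then $\int_{|x|\le1}|x|W(dx)<\infty$ and $\mathcal{K}(s)=\int_{\mathbb{R}}(e^{isx}-1)W(dx)-is\int_{\mathbb{R}}xW(dx)$, so no exponent below $1$ can bound $|\mathcal{K}(s)|$ at large arguments. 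This is why the paper's estimate \eqref{th4:eq_kappa_estimate} uses $|\mathcal{K}(s)|\le C|s|^{\gamma}$ with $\gamma\in\bigl(\max\bigl(1,\beta_{BG},\frac{2\alpha-1}{2(d+1)}\bigr),\frac{\alpha}{d+1}\bigr)$: the hypothesis $\beta_{BG}<\alpha/(d+1)$ is precisely what makes this window nonempty, i.e.\ it is essential for the domination argument, not merely (as you state) for the finiteness of $c^{\pm}_{\alpha/(d+1)}$. Beyond this, your sketch defers what constitutes the bulk of the paper's proof: the two-sided bound $-C_2\le g_T(\lambda,\bm{u})\le -C_1$ on the rescaled kernel over $\widetilde{\Delta}$ for $\lambda\le T^{d+1}/A_T$ (needed both for the pointwise limit and for the refined bounds $\overline{\mathcal{K}}_1,\overline{\mathcal{K}}_2$ in \eqref{th4:kappa_over}), and the boundary-layer estimates for $I_2(T)$ and $I_3(T)$ via incomplete-Gamma asymptotics over the shells $\Delta_1(1\pm\lambda^{\frac{1}{2(d+1)}})$, which show that the contributions of $\bm{u}$ near $\partial\widetilde{\Delta}$, of $\bm{u}$ outside $B(2)$, and of the non-critical $\lambda$-scales vanish at rate $(A_T/T^{d+1})^{\frac{\alpha}{d+1}-\gamma-\delta}$. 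You correctly flag this as the main obstacle; with the exponent in the $\mathcal{K}$-bound corrected as above, your scheme would carry through along the paper's lines, but as written the dominated-convergence core is asserted rather than proved.
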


\begin{proof}
It is enough to prove that for all $a_i\in\mathbb{R}, \ i=\overline{1,k-1},$ and  $0\leq t_1\leq ...\leq t_k\leq1,$ $k\in\mathbb{N},$ it holds
\begin{equation}\label{th4:conv} \sum\limits_{i=1}^{k-1} a_iA_T^{-1}\left(X^*\left(t_{i+1}^{\frac{1}{d}}T\right)-X^*\left(t_{i}^{\frac{1}{d}}T\right)\right)  \overset{d}{{\to}} \ c_7 \sum\limits_{i=1}^{k-1} a_i(L(t_{i+1})-L(t_{i})), \ {\rm when} \ T\to\infty,\end{equation} where 
$A_T:=T^{\frac{d(d+1)}{\alpha}}l^{\#}(T)^\frac{d(d+1)}{\alpha}.$ 

Let us consider the following joint cumulant function
\[ C\left(s_1,...,s_{k-1}\ddagger a_1A_T^{-1}\left(X^*\left(t_{2}^{\frac{1}{d}}T\right)-X^*\left(t_{1}^{\frac{1}{d}}T\right)\right),...,a_{k-1}A_T^{-1}\left(X^*\left(t_{k}^{\frac{1}{d}}T\right)-X^*\left(t_{k-1}^{\frac{1}{d}}T\right)\right)\right)  \]
\[= \int\limits_0^\infty \int\limits_{\mathbb{R}^d}\mathcal{K}\left( -\frac{1}{A_T}\sum\limits_{i=1}^{k-1}\frac{a_is_i}{2\widetilde{\lambda} }\int\limits_{\Delta(t_{i+1}^{\frac{1}{d}}T) \setminus \Delta(t_{i}^{\frac{1}{d}}T)}e^{-\widetilde{\lambda}||\widetilde{\bm{y}}-\widetilde{\bm{u}}||}d\bm{y} \right)d\widetilde{\bm{u}}\,  \widetilde{\lambda}^{\alpha-1}l\left( \frac{1}{\widetilde{\lambda}} \right)d\widetilde{\lambda}. \] By the change of variables $\widetilde{\bm{y}} = T\bm{y}, \ \widetilde{\bm{u}} = T\bm{u},$ and $\widetilde{\lambda} = \lambda'/A_T^{\frac{1}{d+1}},$ one obtains that the above cumulant function equals to  
\[ \int\limits_0^\infty \int\limits_{\mathbb{R}^d}\mathcal{K}\left( -\sum\limits_{i=1}^{k-1}\frac{T^da_is_i}{2A_T^\frac{d}{d+1}\lambda'}\int\limits_{\Delta(t_{i+1}^{\frac{1}{d}}) \setminus \Delta(t_{i}^{\frac{1}{d}})}e^{-\frac{T\lambda'||\bm{y}-\bm{u}||}{A_T^\frac{1}{d+1}}}d\bm{y} \right)d\bm{u}\, (\lambda')^{\alpha-1}\frac{l\Big( \frac{A_T^\frac{1}{d+1}}{\lambda'}\Big)}{l^{\#}(T)^d}d\lambda'. \]
By the change of variables $\lambda' = \lambda^{-\frac{1}{d+1}},$ it takes the following form 
\[\frac{1}{d+1} \int\limits_0^{\infty} \int\limits_{\mathbb{R}^d} \mathcal{K}\left(- \lambda\sum\limits_{i=1}^{k-1} \frac{T^da_is_i}{2(A_T\lambda)^{\frac{d}{d+1}}}\ \int\limits_{\Delta(t_{i+1}^{\frac{1}{d}}) \setminus \Delta(t_{i}^{\frac{1}{d}})} e^{-\frac{T||\bm{y}-\bm{u}||}{(A_T\lambda)^{\frac{1}{d+1}}}}d\bm{y} \right) d\bm{u} \,\frac{l\left((A_T\lambda)^{\frac{1}{d+1}} \right)}{l^{\#}(T)^d} \lambda^{-\frac{\alpha}{d+1}-1}d\lambda\] \begin{equation}\label{th4:cumjump}:= I_1(T) + I_2(T)+I_3(T),\end{equation} where the integration sets in $I_1(T),$ $I_2(T)$ and $I_3(T)$ are $\mathbb{R}^+\times \widetilde{\Delta},$ $\mathbb{R}^+\times(B(2)\setminus \widetilde{\Delta}),$ and $\mathbb{R}^+\times (\mathbb{R}^d\setminus B(2))$ respectively, and  $\widetilde{\Delta} = \cup_{i=1}^{k-1}(\Delta(t_{i+1}^{\frac{1}{d}})\setminus \Delta(t_{i}^{\frac{1}{d}}))=\Delta(t_{k}^{\frac{1}{d}})\setminus \Delta(t_{1}^{\frac{1}{d}}).$

Let a fixed integer $i$ belongs to $\{1,2,...,k-1\}.$ Consider the following integral
\begin{equation}\label{th4:int_tmp} \frac{T^d}{2(A_T\lambda)^{\frac{d}{d+1}}}\int\limits_{\Delta(t_{i+1}^{\frac{1}{d}}) \setminus \Delta(t_{i}^{\frac{1}{d}})} e^{-\frac{T||\bm{y}-\bm{u}||}{(A_T\lambda)^{\frac{1}{d+1}}}}d\bm{y} .\end{equation} 

If $\bm{u}\not\in \Delta(t_{i+1}^\frac{1}{d}) \setminus \Delta(t_{i}^{\frac{1}{d}}),$ and $c$ denotes the distance between $\bm{u}$ and the set $\Delta(t_{i+1}^\frac{1}{d}) \setminus \Delta(t_{i}^{\frac{1}{d}}),$ then, for each fixed $\lambda\in \mathbb{R}^+$ the expression \eqref{th4:int_tmp} is bounded by 
\[  \frac{ C T^d}{2(A_T\lambda)^{\frac{d}{d+1}}} e^{-\frac{Tc}{(A_T\lambda)^{\frac{1}{d+1}}}} \to 0, \ {\rm when}\ T\to\infty,\] as ${T}{A_T^{-\frac{1}{d+1}}}\to\infty,$ when $T\to\infty.$ 

If $\bm{u}$ belongs to the interior of $\Delta(t_{i+1}^\frac{1}{d}) \setminus \Delta(t_{i}^{\frac{1}{d}}),$ then there exists a ball $B_{\bm{u}}(C)\subset  \Delta(t_{i+1}^\frac{1}{d}) \setminus \Delta(t_{i}^{\frac{1}{d}})$ and \eqref{th4:int_tmp} equals to
\begin{equation}\label{th7:conv_const} \frac{ T^d}{2(A_T\lambda)^{\frac{d}{d+1}}} \left(\int\limits_{B_{\bm{u}}(C)} e^{-\frac{T||\bm{y}-\bm{u}||}{(A_T\lambda)^{\frac{1}{d+1}}}}d\bm{y} + \int\limits_{(\Delta(t_{i+1}^\frac{1}{d}) \setminus \Delta(t_{i}^{\frac{1}{d}}))\setminus B_{\bm{u}}(C)} e^{-\frac{T||\bm{y}-\bm{u}||}{(A_T\lambda)^{\frac{1}{d+1}}}}d\bm{y} \right)\to \frac{\pi^{\frac{d}{2}}\Gamma(d)}{\Gamma(\frac{d}{2})}, \end{equation}
when $T\to\infty.$ The proof is analogous to \eqref{intrd}, as the first integral converges to ${\pi^{\frac{d}{2}}\Gamma(d)}/{\Gamma(\frac{d}{2})}$ and the second integral converges to $0,$ because the integration set does not contain the origin. 

For $\bm{u}\in \partial(\Delta(t_{i+1}^{\frac{1}{d}})\setminus \Delta(t_{i}^{\frac{1}{d}})),$  the expression \eqref{th4:int_tmp} is finite, but its value is not important for the subsequent computations as the corresponding $\bm{u}$-integration set is of measure zero, i.e. $|\partial(\Delta(t_{i+1}^{\frac{1}{d}})\setminus \Delta(t_{i}^{\frac{1}{d}}))|~=~0.$

As $l^{\#}(\cdot)$ is the de Bruin conjugate of $l^{-\frac{1}{d}}(x^\frac{d}{\alpha}),$ by \eqref{debruinconj},  for each fixed $\lambda\in\mathbb{R}^+$ it holds
\begin{equation}\label{th4:eq_bruin_conv} \frac{l\left( (A_T\lambda)^{\frac{1}{d+1}} \right)}{l^{\#}(T)^d} = \frac{l\left( (A_T\lambda)^{\frac{1}{d+1}} \right)}{l(A_T^{\frac{1}{d+1}})}\frac{l(A_T^{\frac{1}{d+1}})}{l^{\#}(T)^d}\to1, \ {\rm when}\ T\to\infty. \end{equation} Thus, changing the order of the limit and the integration in $I_1(T),$ by \eqref{th7:conv_const} and \eqref{th4:eq_bruin_conv}, it follows that, when $T\to\infty,$ the integral $I_1(T)$ converges to
\begin{equation}\label{th4:lim} \frac{1}{d+1} \int\limits_0^{\infty} \int\limits_{\widetilde{\Delta}} \mathcal{K}\left(-\frac{\pi^{\frac{d}{2}}\Gamma(d)\lambda}{\Gamma(\frac{d}{2})} \sum\limits_{i=1}^{k-1} a_i s_i \mathbb{1}_{\Delta(t_{i+1}^\frac{1}{d}) \setminus \Delta(t_{i}^{\frac{1}{d}})} (\bm{u}) \right) d\bm{u} \lambda^{-\frac{\alpha}{d+1}-1}d\lambda.\end{equation} As  $0\leq t_1\leq...\leq t_k\leq1,$ the above equals to \[ \frac{1}{d+1} \sum\limits_{i=1}^{k-1}  \int\limits_0^{\infty} \int\limits_{\Delta(t_{i+1}^\frac{1}{d}) \setminus \Delta(t_{i}^{\frac{1}{d}})} \mathcal{K}\left(-\frac{\pi^{\frac{d}{2}}\Gamma(d)a_is_i\lambda}{\Gamma(\frac{d}{2})}  \right) d\bm{u} \lambda^{-\frac{\alpha}{d+1}-1}d\lambda \]
\[ = \frac{|\Delta|}{d+1} \sum\limits_{i=1}^{k-1} (t_{i+1}-t_{i})\int\limits_0^\infty \mathcal{K}\left(-\frac{\pi^{\frac{d}{2}}\Gamma(d)a_is_i\lambda}{\Gamma(\frac{d}{2})}  \right) \lambda^{-\frac{\alpha}{d+1}-1}d\lambda \]
\begin{equation}\label{th4:eqint} = c_7^{\frac{\alpha}{d+1}} \sum\limits_{i=1}^{k-1} (t_{i+1}-t_{i})|a_is_i|^\frac{\alpha}{d+1}\frac{\alpha}{d+1} \int\limits_0^\infty \mathcal{K}\left(-\sign(a_is_i)\lambda \right) \lambda^{-\frac{\alpha}{d+1}-1}d\lambda .\end{equation} Let us consider the integral in the above expression, when $\sign(a_is_i)=1$ (the case $\sign(a_is_i)=-1$ is treated analogously, and the case $\sign(a_is_i)=0$ is trivial as $\mathcal{K}(0)=0$). 

As by \cite[Theorem 2.2.2]{ibragimov1971independent}
\[ \int\limits_0^\infty (e^{\mp i u}-1\pm iu)u^{-\gamma-1} du=\exp\left\{ \mp \frac{i\pi\gamma}{2} \right\}\frac{\Gamma(2-\gamma)}{\gamma(\gamma-1)}, \ 1<\gamma<2,\] one obtains
\[\frac{\alpha}{d+1} \int\limits_0^\infty \mathcal{K}\left(-\lambda \right) \lambda^{-\frac{\alpha}{d+1}-1}d\lambda =  \frac{\alpha}{d+1} \int\limits_{-\infty}^\infty \int\limits_0^\infty (e^{-i\lambda y}-1+i\lambda y)\lambda^{-\frac{\alpha}{d+1}-1}d\lambda W(dy)\]
\begin{eqnarray} & = &\frac{\alpha}{d+1} \int\limits_{0}^\infty \int\limits_0^\infty (e^{-i\lambda }-1+i\lambda )\lambda^{-\frac{\alpha}{d+1}-1}d\lambda \, y^\frac{\alpha}{d+1} W(dy) \nonumber\\ &+& \frac{\alpha}{d+1} \int\limits_{-\infty}^0 \int\limits_0^\infty (e^{i\lambda }-1-i\lambda )\lambda^{-\frac{\alpha}{d+1}-1}d\lambda\, |y|^\frac{\alpha}{d+1} W(dy)\nonumber\\ &= &\frac{\Gamma(2-\frac{\alpha}{d+1})}{\left(\frac{\alpha}{d+1}-1\right)}\left(\exp\left( - \frac{i\pi\alpha}{2(d+1)} \right)\int\limits_0^\infty y^{\frac{\alpha}{d+1}}W(dy)  +  \exp\left(\frac{i \pi \alpha}{2(d+1)}\right)\int\limits_{-\infty}^0 |y|^{\frac{\alpha}{d+1}}W(dy)\right)\nonumber\\ & =& -\frac{\Gamma(2-\frac{\alpha}{d+1})}{\left(1-\frac{\alpha}{d+1}\right)}\left(  \cos\left( \frac{ \pi \alpha}{2(d+1)} \right)\left(\int\limits_0^\infty y^{\frac{\alpha}{d+1}} W(dy) + \int\limits_{-\infty}^0|y|^{\frac{\alpha}{d+1}} W(dy)\right) \right.\nonumber\\ & -& \left.i\sin\left( \frac{ \pi \alpha}{2(d+1)} \right)\left(\int\limits_0^\infty y^{\frac{\alpha}{d+1}} W(dy) - \int\limits_{-\infty}^0|y|^{\frac{\alpha}{d+1}} W(dy)\right)\right)  =-\omega\left(a_is_i; \frac{\alpha}{d+1}, c^{-}_\frac{\alpha}{d+1}, c^{+}_\frac{\alpha}{d+1}\right).\nonumber \end{eqnarray} Thus, by \eqref{cum_gamma} and \eqref{th4:eqint} and the above result, it follows
\[ I_1(T)\to- c_7^{\frac{\alpha}{d+1}} \sum\limits_{i=1}^{k-1} (t_{i+1}-t_{i})|a_is_i|^\frac{\alpha}{d+1}\omega\left(a_is_i; \frac{\alpha}{d+1}, c^{-}_\frac{\alpha}{d+1}, c^{+}_\frac{\alpha}{d+1}\right), \ {\rm \ when}\ T\to\infty,\] which by the independence of the increments of the process $L_{\frac{\alpha}{d+1}}(\cdot)$ and \eqref{th4:levy_cum} is the joint cumulant function of the limit in \eqref{th4:conv}. 

Let us show that the dominated convergence theorem holds true for the integral $I_1(T)$. By Potter's inequality and \eqref{th4:eq_bruin_conv}, for all $\lambda\in\mathbb{R}^+$ and any $\delta>0$ it holds
\begin{equation}\label{th4:eq_bruin_potter} \frac{l\left((A_T\lambda)^{\frac{1}{d+1}} \right)}{l^{\#}(T)^d} = \frac{l\left( (A_T\lambda)^{\frac{1}{d+1}} \right)}{l(A_T^{\frac{1}{d+1}})}\frac{l(A_T^{\frac{1}{d+1}})}{l^{\#}(T)^d}\leq C\max\{ \lambda^{-\delta}, \lambda^{\delta} \},\end{equation} when $T$ is sufficiently large. 

It also holds
\[
|\mathcal{K}(\lambda)| \leq \int\limits_{|\lambda y|\leq1}|e^{i \lambda y} - 1 - i\lambda y|W(dy) + \int\limits_{|\lambda y|>1}|e^{i \lambda y} - 1 - i \lambda y|W(dy)
\]\[\leq \int\limits_{|\lambda y|\leq1}|\lambda y|^2 W(dy) + \int\limits_{|\lambda y| > 1}(2 + |\lambda y|)W(dy) \leq \lambda ^2 \int\limits_{|y|\leq\frac{1}{|\lambda|}} |y|^2 W(dy) +  3|\lambda|\int\limits_{|y|>\frac{1}{|\lambda|}} |y| W(dy)\] 
\begin{equation}\label{th4:kappa_est}
=:\mathcal{K}_1(\lambda)+\mathcal{K}_2(\lambda).
\end{equation}

Due to \eqref{eq:W_x2} and properties of the Blumental-Getoor index, chosing $\gamma'\in(0,1)$ such that $\beta_{BG}<1+\gamma'$ and a sufficiently small $\delta>0,$ one obtains
\[ \int\limits_0^\infty\mathcal{K}_1(-\lambda)\lambda^{-\gamma'-2}\max\left( \lambda^{-\delta}, \lambda^{\delta} \right) d\lambda \] \[\leq \int\limits_{|y|\leq1}y^2W(dy)\int\limits_0^1\lambda^{-\gamma'-\delta}d\lambda + \int\limits_{|y|\leq1}y^2 \int\limits_{1}^{\frac{1}{|y|}}\lambda^{-\gamma'+\delta}d\lambda W(dy) + \int\limits_{|y|>1}y^2 \int\limits_0^{\frac{1}{|y|}} \lambda^{-\gamma'-\delta}d\lambda W(dy)\]
\begin{equation} \label{th4:eqkappa1} = C_1 +C_2 \int\limits_{|y|\leq1}|y|^{1+\gamma'-\delta}W(dy) + C_3\int\limits_{|y|>1} |y|^{1+\gamma'+\delta}W(dy) < \infty.\end{equation} Analogously, 
\begin{eqnarray}  & &\hspace*{-1cm}\int\limits_0^\infty \mathcal{K}_2(-\lambda)\lambda^{-\gamma'-2} \max\left(\lambda^{-\delta}, \lambda^{\delta} \right) d\lambda\nonumber\\  &=& 3\Bigg(\int\limits_{|y|\leq1}|y|\int\limits_{\frac{1}{|y|}}^\infty \lambda^{-\gamma'-1+\delta}d\lambda W(dy) + \int\limits_{|y|>1}|y|W(dy)\int\limits_{1}^\infty \lambda^{-\gamma'-1+\delta} d\lambda \nonumber\\ 
&+& \int\limits_{|y|>1}|y|\int\limits_{\frac{1}{|y|}}^1\lambda^{-\gamma'-1-\delta}d\lambda W(dy)\Bigg)
   \leq C_4 + C_5 \int\limits_{|y|>1}|y|^{1+\gamma'+\delta} W(dy) < \infty. \label{th4:eqkappa2} \end{eqnarray}

Let us denote
\begin{equation}\label{th7:gt} g_T(\lambda,\bm{u}):=-\sum\limits_{i=1}^{k-1} \frac{T^da_is_i}{2(A_T\lambda)^{\frac{d}{d+1}}}\  \int\limits_{\Delta(t_{i+1}^{\frac{1}{d}}) \setminus \Delta(t_{i}^{\frac{1}{d}})} e^{-\frac{T||\bm{y}-\bm{u}||}{(A_T\lambda)^{\frac{1}{d+1}}}}d\bm{y}.\end{equation} 
Using \eqref{th4:eq_bruin_potter}, the integrand in $I_1(T)$ can be bounded by
\begin{equation}\label{th4:eq_kappa1}
\begin{split}
|\mathcal{K}\left(- \lambda g_T(\lambda,\bm{u}) \right)| \max\left( \lambda^{-\delta}, \lambda^\delta \right) \lambda^{-\frac{\alpha}{d+1}-1}\mathbb{1}\left( \lambda\leq\frac{T^{d+1}}{A_T}\right) \\ 
+ |\mathcal{K}\left(- \lambda g_T(\lambda,\bm{u}) \right)| \max\left( \lambda^{-\delta}, \lambda^\delta \right) \lambda^{-\frac{\alpha}{d+1}-1}\mathbb{1}\left(\lambda>\frac{T^{d+1}}{A_T}\right). 
\end{split}
\end{equation} 
Let us show that for fixed $a_i,s_i\in\mathbb{R},$ $t_i\in[0,1],\ i=1,...,k-1,$ and all $\lambda\leq {T^{d+1}}/{A_T}$ and $\bm{u}\in B(2),$ there exist constants $C_1$ and $C_2$ such that $0<C_1\leq C_2<\infty$ and
\begin{equation}\label{th7:gt_const} -C_2 \leq g_T(\lambda,\bm{u})\leq -C_1.\end{equation} Indeed, by the subsequent changes of variables $\widetilde{\bm{y}} = \bm{y}-\bm{u}$ and $\bm{y} = ({(A_T\lambda)^{\frac{1}{d+1}}}/{T})\widetilde{\bm{y}}$ in~\eqref{th7:gt}, one obtains
\[ g_T(\lambda,\bm{u}):=-\frac{1}{2}\sum\limits_{i=1}^{k-1}a_is_i  \int\limits_{\Delta_{\bm{u}}\big(ct_{i+1}^{\frac{1}{d}}\big) \setminus \Delta_{\bm{u}}\big(ct_{i}^{\frac{1}{d}}\big)} e^{-||\bm{y}||}d\bm{y}, \]
where $c = T/(A_T\lambda)^{\frac{1}{d+1}}.$ Therefore, since $\bm{u}\in\widetilde{\Delta},$ there exists $i\in \{1,2,...,k\},$ such that $\mathbb{0}\in \Delta_{\bm{u}}\big(ct_{i+1}^{\frac{1}{d}}\big) \setminus \Delta_{\bm{u}}\big(ct_{i}^{\frac{1}{d}}\big).$ The upper bound in \eqref{th7:gt_const} follows as  $c\in[1,\infty)$ for $\lambda\leq{T^{d+1}}/{A_T}.$  Replacing $\Delta_{\bm{u}}\big(ct_{i+1}^{\frac{1}{d}}\big) \setminus \Delta_{\bm{u}}\big(ct_{i}^{\frac{1}{d}}\big)$ by $\mathbb{R}^d$ gives the lower bound in \eqref{th7:gt_const}.

Let us define
\begin{equation}
\begin{split}\label{th4:kappa_over} \sup\limits_{a\in[-C_2, -C_1]} \mathcal{K}_1(a\lambda)  \leq  C\lambda^2\int\limits_{|y|\leq\frac{1}{C_1|\lambda|}}y^2W(dy) = C{\mathcal{K}}_1(C_1\lambda):=\overline{\mathcal{K}}_1(\lambda),  \\
\sup\limits_ {a\in[-C_2, -C_1]}\mathcal{K}_2(a\lambda)  \leq C|\lambda|\int\limits_{|y|>\frac{1}{C_2|\lambda|}}|y|W(dy) = C{\mathcal{K}}_2(C_2\lambda):=\overline{\mathcal{K}}_2(\lambda).
\end{split}
\end{equation}

By \eqref{th4:kappa_est}, \eqref{th7:gt_const} and \eqref{th4:kappa_over}, one gets
\[ |\mathcal{K}(-\lambda g_T(\lambda,\bm{u}))| \max\left( \lambda^{-\delta}, \lambda^\delta \right)\lambda^{-\frac{\alpha}{d+1}-1} \mathbb{1}\left(\lambda\leq\frac{T^{d+1}}{A_T}\right)\] \begin{equation}\label{th4:eq_integrability1}
\leq (\overline{\mathcal{K}}_1(\lambda)+\overline{\mathcal{K}}_2(\lambda))\max\left( \lambda^{-\delta},\lambda^{\delta} \right)\lambda^{-\frac{\alpha}{d+1}-1},\end{equation} where, for sufficiently small $\delta,$  the last function is integrable by \eqref{th4:eqkappa1}, \eqref{th4:eqkappa2} and \eqref{th4:kappa_over}.

Let us choose $\gamma\in(\max(1,\beta_{BG},(2\alpha-1)/(2(d+1))),\alpha/(d+1)).$ Then, one obtains
\[ |\mathcal{K}(\lambda)| \leq \int\limits_{|\lambda y|\leq1}|e^{i \lambda y} - 1 - i\lambda y|W(dy) + \int\limits_{|\lambda y|>1}|e^{i \lambda y} - 1 - i \lambda y|W(dy) \]
\begin{equation}\label{th4:eq_kappa_estimate}
\leq \int\limits_{|\lambda y|\leq1}|\lambda y|^2 W(dy) + \int\limits_{|\lambda y| > 1}(2 + |\lambda y|)W(dy) \leq \int\limits_{|\lambda y|\leq1} |\lambda y|^\gamma W(dy) + 3\int\limits_{|\lambda y|>1} |\lambda y|^\gamma W(dy) \leq C\lambda^\gamma.\end{equation} 

Note that for fixed $a_i,s_i, \in\mathbb{R},\ t_i\in[0,1], \ i=1,...,k-1,$ and all $\lambda>{T^{d+1}}/{A_T}$ and $\bm{u}\in B(2),$ for some $C>0$ it holds $g_T(\lambda,\bm{u})<C.$  Thus, by  \eqref{th4:eq_kappa_estimate}, one gets
\[ |\mathcal{K}\left(- \lambda g_T(\lambda,\bm{u}) \right)|  \max( \lambda^{-\delta}, \lambda^\delta) \lambda^{-\frac{\alpha}{d+1}-1}\mathbb{1}\left(\lambda>\frac{T^{d+1}}{A_T}\right) \]\[ \leq C\lambda^\gamma \max( \lambda^{-\delta}, \lambda^\delta) \lambda^{-\frac{\alpha}{d+1}-1}\mathbb{1}\left(\lambda>\frac{T^{d+1}}{A_T}\right) \leq C\lambda^{\gamma+\delta-\frac{\alpha}{d+1}-1}\mathbb{1}\left(\lambda> 1\right),\] where the last estimate follows from ${T^{d+1}}/{A_T}\to\infty,$ when $T\to\infty.$ As for sufficiently small~$\delta$ the upper bounds above and in \eqref{th4:eq_integrability1} are integrable, the integrand in $I_1(T)$ is bounded by an integrable function. Also, the integrand in $I_1(T)$ converges pointwise to that in \eqref{th4:lim}, when $T\to\infty.$ Therefore, the conditions of the dominated convergence theorem are satisfied.

Let us prove that the integral $I_2(T)$ converges to $0,$ when $T\to\infty.$ Due to \eqref{th4:eq_bruin_potter}, \eqref{th4:eq_kappa_estimate} and as $a_i,s_i, \ i=1,...,k,$ are finite and fixed, $|I_2(T)| $ is bounded by
\[ C \sum\limits_{i=1}^{k-1} \int\limits_0^{\infty} \int\limits_{B(2)\setminus \widetilde{\Delta}} \left( \frac{T^d}{2(A_T\widetilde{\lambda})^{\frac{d}{d+1}}}\ \int\limits_{\Delta(t_{i+1}^{\frac{1}{d}}) \setminus \Delta(t_{i}^{\frac{1}{d}})} e^{-\frac{T||\bm{y}-\bm{u}||}{(A_T\widetilde{\lambda})^{\frac{1}{d+1}}}}d\bm{y} \right)^\gamma d\bm{u} \max(\widetilde{\lambda}^{\delta},\widetilde{\lambda}^{-\delta})\, \widetilde{\lambda}^{\gamma-\frac{\alpha}{d+1}-1}d\widetilde{\lambda}.\] By the change of variables $\widetilde{\lambda} = T^{d+1}\lambda/A_T$ and as $\Delta(t_{i+1}^{\frac{1}{d}}) \setminus \Delta(t_{i}^{\frac{1}{d}})\in\widetilde{\Delta}, \ i=1,...,k-1,$ the left-hand side expression above is bounded by
\[C\left( \frac{A_T}{T^{d+1}} \right)^{\frac{\alpha}{d+1}-\gamma-\delta} \int\limits_0^\infty\int\limits_{B(2)\setminus \widetilde{\Delta}} \left({\lambda^{-\frac{d}{d+1}}} \int\limits_{\widetilde{\Delta}} e^{-\frac{||\bm{y}-\bm{u}||}{\lambda^{\frac{1}{d+1}}}}d\bm{y} \right)^\gamma d\bm{u} \max({\lambda}^{\delta},{\lambda}^{-\delta}) {\lambda}^{\gamma-\frac{\alpha}{d+1}-1}d{\lambda} \]\begin{equation}\label{eq:th4_i2new_sum} =: C\left( \frac{A_T}{T^{d+1}} \right)^{\frac{\alpha}{d+1}-\gamma-\delta} (I_{2,1} + I_{2,2}).\end{equation} where the first two integration sets in $I_{2,1}$ and $ I_{2,2}$ are $[0,1]\times (B(2)\setminus \widetilde{\Delta})$ and $[1,\infty)\times (B(2)\setminus \widetilde{\Delta})$ respectively.

Let us consider the integral $I_{2,1}.$ By the change of variables $\widetilde{\bm{y}} = \bm{y} - \bm{u},$ one gets
\[\int\limits_{0}^1 \int\limits_{B(2) \setminus {\widetilde{\Delta}}} \left(  \int\limits_{\widetilde{\Delta}_{\bm{u}}} e^{-\frac{||\widetilde{\bm{y}}||}{\widetilde{\lambda}^{\frac{1}{d+1}}}}d\widetilde{\bm{y}} \right)^\gamma d\bm{u} {\lambda}^{\gamma-\frac{\gamma d}{d+1}-\frac{\alpha}{d+1}-\delta-1}d{\lambda}\]\[\leq \int\limits_{0}^1 \int\limits_{B(2) \setminus {\widetilde{\Delta}}} \left(  \int\limits_{||\widetilde{\bm{y}}||\geq{\rm dist}({\bm{u}},\widetilde{\Delta})} e^{-\frac{||\widetilde{\bm{y}}||}{\widetilde{\lambda}^{\frac{1}{d+1}}}}d\widetilde{\bm{y}} \right)^\gamma d\bm{u} {\lambda}^{\gamma-\frac{\gamma d}{d+1}-\frac{\alpha}{d+1}-\delta-1}d{\lambda},\]where ${\rm dist}({\bm{u}},\widetilde{\Delta})$ denotes the distance between a point $\bm{u}$ and the set $\widetilde{\Delta}.$ By the change of variables $\bm{y} = \widetilde{\bm{y}}/\lambda^\frac{1}{d+1},$ one obtains
\[ \int\limits_{0}^1 \int\limits_{B(2) \setminus {\widetilde{\Delta}}} \left( \int\limits_{\lambda^{\frac{1}{d+1}}||\bm{y}||\geq{\rm dist}({\bm{u}},\widetilde{\Delta})} e^{-||\bm{y}||}d\bm{y} \right)^\gamma d\bm{u} {\lambda}^{\gamma-\frac{\alpha}{d+1}-\delta-1}d{\lambda}\]
\begin{equation}\label{eq:th4_int_2}=\int\limits_{0}^1 \int\limits_{B(2) \setminus {\widetilde{\Delta}}}\Gamma^\gamma\left(d, \frac{{\rm dist}(\bm{u}, \widetilde{\Delta})}{\lambda^{\frac{1}{d+1}}} \right)d\bm{u}{\lambda}^{\gamma-\frac{\alpha}{d+1}-\delta-1}d\lambda.\end{equation} Let us consider the inner integral above
\[ \int\limits_{B(2) \setminus {\widetilde{\Delta}}}\Gamma^\gamma\left(d, \frac{{\rm dist}(\bm{u}, \widetilde{\Delta})}{\lambda^{\frac{1}{d+1}}} \right)d\bm{u} = \int\limits_{B(2) \setminus \Delta_1}\Gamma^\gamma\left(d, \frac{{\rm dist}(\bm{u}, \partial\Delta_1)}{\lambda^{\frac{1}{d+1}}} \right)d\bm{u}\]\[+\int\limits_{ \Delta_2}\Gamma^\gamma\left(d, \frac{{\rm dist}(\bm{u}, \partial\Delta_2)}{\lambda^{\frac{1}{d+1}}} \right)d\bm{u}=:I_{2,1,1}+I_{2,1,2},\] where $\Delta_1=\Delta(t_k^\frac{1}{d})$ and $\Delta_2=\Delta(t_1^\frac{1}{d}).$ It holds \[I_{2,1,1}= \int\limits_{B(2) \setminus {\Delta_1(1+\lambda^{\frac{1}{2(d+1)}})}}\Gamma^\gamma\left(d, \frac{{\rm dist}(\bm{u}, \partial\Delta_1)}{\lambda^{\frac{1}{d+1}}} \right)d\bm{u}+\int\limits_{{\Delta_1(1+\lambda^{\frac{1}{2(d+1)}})} \setminus \Delta_1}\Gamma^\gamma\left(d, \frac{{\rm dist}(\bm{u}, \partial\Delta_1)}{\lambda^{\frac{1}{d+1}}} \right)d\bm{u}.\] Note, that because $\Delta_1$ is a convex set and the origin is in its interior, there is a constant $C>0$ such that ${\rm dist}(\bm{u}, \partial\Delta_1)\geq C \lambda^{\frac{1}{2(d+1)}},$ for  all $\bm{u}\in B(2) \setminus {\Delta_1(1+\lambda^{\frac{1}{2(d+1)}})}$ in the first integral. Also, $\Gamma(d,\cdot)<C$ and the volume of the integration set in the second integral is less than $C((\lambda^{\frac{1}{2(d+1)}}+1)^d-1)\le C\lambda^{\frac{1}{2(d+1)}}.$ Therefore,  
\[I_{2,1,1}\le C_1\Gamma\left(d,\frac{C}{\lambda^{\frac{1}{2(d+1)}}}\right) + C_2\lambda^{\frac{1}{2(d+1)}} \leq C \lambda^{\frac{1}{2(d+1)}},\ {\rm when} \ \lambda\to0,\] where the last estimate holds as the incomplete Gamma function satisfies $\Gamma(d, x) \sim x^{d-1}e^{-x},$ when $x\to\infty.$ 

If $t_1=0,$ then $I_{2,1,2}=0.$ Otherwise, analogously to $I_{2,1,1},$ one gets 
\[\int\limits_{ \Delta_2}\Gamma^\gamma\left(d, \frac{{\rm dist}(\bm{u}, \partial\Delta_2)}{\lambda^{\frac{1}{d+1}}} \right)d\bm{u} = \int\limits_{ \Delta_2(1-\lambda^{\frac{1}{2(d+1)}})}\Gamma^\gamma\left(d, \frac{{\rm dist}(\bm{u}, \partial\Delta_2)}{\lambda^{\frac{1}{d+1}}} \right)d\bm{u} \] \[\hspace*{1cm} + \int\limits_{\Delta_2 \setminus \Delta_2(1-\lambda^{\frac{1}{2(d+1)}})}\Gamma^\gamma\left(d, \frac{{\rm dist}(\bm{u}, \partial\Delta_2)}{\lambda^{\frac{1}{d+1}}} \right)d\bm{u}\leq C\lambda^{\frac{1}{2(d+1)}}, \ {\rm when} \ \lambda\to0. \] 
Thus, the integral \eqref{eq:th4_int_2} is finite as $\gamma\in(\max(1,\beta_{BG},\frac{2\alpha-1}{2(d+1)}),\frac{\alpha}{d+1}),$ from which follows the finiteness of $I_{2,1}.$

Let us show that the integral $I_{2,2}$ is finite. As the inner integral is bounded, it holds 
\[ \int\limits_1^{\infty} \int\limits_{B(2)\setminus \widetilde{\Delta}} \left( \frac{1}{\lambda^{\frac{d}{d+1}}}\ \int\limits_{\widetilde{\Delta}} e^{-\frac{||\bm{y}-\bm{u}||}{\lambda^{\frac{1}{d+1}}}}d\bm{y} \right)^\gamma d\bm{u} {\lambda}^{\gamma-\frac{\alpha}{d+1}+\delta-1}d{\lambda}\leq C\int\limits_{1}^\infty \lambda^{\gamma+\delta-\frac{\alpha}{d+1}-1}<\infty,\]  as $\gamma<\alpha/(d+1)$ and $\delta$ can be chosen arbitrarily small.

Thus, the integrals $I_{2,1}$ and $I_{2,2}$ are finite and as $A_T/T^{d+1}\to0,$ when $T\to\infty,$ it follows from \eqref{eq:th4_i2new_sum} that $I_2(T)\to 0,$ when $T\to \infty.$

Let us prove that the integral $I_3(T)$ converges to $0,$ when $T\to\infty.$  It holds
\[|I_3(T)| \leq \int\limits_0^{\infty} \int\limits_{\mathbb{R}^d\setminus B(2)} |\mathcal{K}\left(- \lambda g_T(\lambda,\bm{u}) \right)|d\bm{u} \max( \lambda^{-\delta}, \lambda^\delta ) \lambda^{-\frac{\alpha}{d+1}-1}d\lambda \]
\[ = \int\limits_0^{\infty} \int\limits_{\mathbb{R}^d\setminus B(2)} |\mathcal{K}\left(- \lambda g_T(\lambda,\bm{u}) \right)|d\bm{u} \max( \lambda^{-\delta}, \lambda^\delta ) \lambda^{-\frac{\alpha}{d+1}-1}\mathbb{1}\left(\lambda>\frac{T^{d+1}}{A_T}\right)d\lambda \]
\[ + \int\limits_0^{\infty} \int\limits_{\mathbb{R}^d\setminus B(2)} |\mathcal{K}\left(- \lambda g_T(\lambda,\bm{u}) \right)|d\bm{u} \max( \lambda^{-\delta}, \lambda^\delta ) \lambda^{-\frac{\alpha}{d+1}-1}\mathbb{1}\left(\lambda\leq\frac{T^{d+1}}{A_T}\right)d\lambda \] \[: = I_3^{(1)}(T)+I_3^{(2)}(T).\] Let us consider the inner integral in $I_3^{(1)}(T).$ As $a_i,s_i\in\mathbb{R}, \ i=1,...,k-1,$ are fixed and $\Delta(t_i)\in B(1), \ i=1,...,k-1,$  by applying \eqref{th4:eq_kappa_estimate}, one gets
\begin{eqnarray} & &\hspace*{-1cm} \int\limits_{\mathbb{R}^d\setminus B(2)} |\mathcal{K}(-\lambda g_T(\lambda,\bm{u}))|d\bm{u} \leq C \lambda^\gamma \int\limits_{\mathbb{R}^d \setminus B(2)} |g_T(\lambda,\bm{u})|^\gamma d\bm{u} \nonumber\\
&\leq& C\lambda^\gamma\int\limits_{\mathbb{R}^d\setminus B(2)}\left( \frac{T^d}{(A_T\lambda)^{\frac{d}{d+1}}} \right)^\gamma \left( \int\limits_{B(1)} e^{-\frac{T||\bm{y}-\bm{u}||}{(A_T\lambda)^{\frac{1}{d+1}}}} d\bm{y} \right)^\gamma d\bm{u}\nonumber\\
&\leq& C \lambda^\gamma \left(  \frac{T^d}{(A_T\lambda)^{\frac{d}{d+1}}} \right)^\gamma e^{\frac{\gamma T}{(A_T\lambda)^{\frac{1}{d+1}}}}\int\limits_{\mathbb{R}^d\setminus B(2)} e^{-\frac{\gamma T||\bm{u}||}{(A_T\lambda)^{\frac{1}{d+1}}}}d\bm{u}\nonumber\\
& = & C\lambda^\gamma \left(  \frac{T}{(A_T\lambda)^{\frac{1}{d+1}}} \right)^{d\gamma-1} e^{\frac{\gamma T}{(A_T\lambda)^{\frac{1}{d+1}}}} \int\limits_{\frac{2\gamma T}{(A_T\lambda)^{\frac{1}{d+1}}}}^\infty u^{d-1}e^{-u}du \leq C\lambda^\gamma.\end{eqnarray} 
The last estimate holds as the integral represents the incomplete Gamma function $\Gamma(d, \cdot),$ which is bounded, and ${T}/(A_T\lambda)^{\frac{1}{d+1}}\in(0,1)$ in $I_3^{(1)}(T).$ 
Therefore,
\[I_3^{(1)}(T) \leq C \int\limits_\frac{T^{d+1}}{A_T}^\infty\lambda^{\gamma+\delta-\frac{\alpha}{d+1}-1}d\lambda\to0, \ {\rm when} \ T\to\infty, \] as $\gamma<{\alpha}/(d+1)$ and $\delta>0$ can be chosen arbitrarily small.

Let us show that the integral $I_3^{(2)}(T)$ converges to $0,$ when $T\to\infty.$ As $a_i,s_i\in\mathbb{R},$ $i=\overline{1,k-1},$ are fixed and $\Delta(t_i)\in B(1), \ i=\overline{1,k},$ by applying \eqref{th4:eq_kappa_estimate}, one gets
\[|I_3^{(2)}(T)|\leq\int\limits_0^{\frac{T^{d+1}}{A_T}} \int\limits_{\mathbb{R}^d\setminus B(2)} \left|\mathcal{K}\left(- \lambda g_T(\lambda,\bm{u}) \right)\right| d\bm{u} \max( \lambda^{-\delta}, \lambda^\delta ) \lambda^{-\frac{\alpha}{d+1}-1}d\lambda  \]
\[ \leq C  \int\limits_0^{\frac{T^{d+1}}{A_T}} \int\limits_{\mathbb{R}^d\setminus B(2)} \left(  \frac{T^d}{(A_T\lambda)^{\frac{d}{d+1}}} \int\limits_{B(1)}e^{-\frac{T||\bm{y}-\bm{u}||}{(A_T\lambda)^{\frac{1}{d+1}}}}d\bm{y} \right)^\gamma d\bm{u} \max(\lambda^{-\delta}, \lambda^{\delta}) \lambda^{\gamma-\frac{\alpha}{d+1}-1}d\lambda.\] By the change of variables $\lambda = \frac{T^{d+1}}{A_T}\widetilde{\lambda},$ the above double integral is bounded by
\begin{equation}\label{th4:eqj2}  \left( \frac{A_T}{T^{d+1}} \right)^{\frac{\alpha}{d+1}-\gamma-\delta} \int\limits_0^{1}\int\limits_{\mathbb{R}^d\setminus B(2)} \left( \frac{1}{\widetilde{\lambda}^\frac{d}{d+1}}\int\limits_{B(1)} e^{-\frac{||\bm{y}-\bm{u}||}{\widetilde{\lambda}^{\frac{1}{d+1}}}} d\bm{y} \right)^\gamma d\bm{u}\, \widetilde{\lambda}^{\gamma-\frac{\alpha}{d+1}-\delta-1}d\widetilde{\lambda}.\end{equation} The inner integral in \eqref{th4:eqj2} can be estimated from above as
\[ \int\limits_{\mathbb{R}^d\setminus B(2)} \left( \frac{1}{\widetilde{\lambda}^\frac{d}{d+1}} \int\limits_{B(1)} e^{-\frac{||\bm{y}-\bm{u}||}{\widetilde{\lambda}^{\frac{1}{d+1}}}} d\bm{y}  \right)^\gamma d\bm{u}  \leq \frac{C}{\widetilde{\lambda}^{\frac{d\gamma}{d+1}}}e^{\frac{\gamma}{\widetilde{\lambda}^{\frac{1}{d+1}}}} \int\limits_2^\infty u^{d-1} e^{-\frac{\gamma u}{\widetilde{\lambda}^{\frac{1}{d+1}}}}du.\] By the change of variables ${\gamma}{\widetilde{\lambda}^{-\frac{1}{d+1}}}u=
\widetilde{u},$ the expression on the right-hand side above is equal to
\[  \frac{C}{\widetilde{\lambda}^{\frac{d(\gamma-1)}{d+1}}\gamma^d} e^{\frac{\gamma}{\widetilde{\lambda}^{\frac{1}{d+1}}}} \int\limits_{\frac{2\gamma}{\widetilde{\lambda}^{\frac{1}{d+1}}}}^\infty \widetilde{u}^{d-1} e^{-\widetilde{u}}d\widetilde{u}\, \leq\, \frac{C}{\widetilde{\lambda}^{\frac{d\gamma-1}{d+1}}}e^{-\frac{\gamma}{\widetilde{\lambda}^{\frac{1}{d+1}}}}, \ \mbox{when} \ \widetilde{\lambda}\to0, \] where the upper bound follows from the asymptotic behaviour of the incomplete Gamma function $\Gamma(d,x)\sim x^{d-1}e^{-x},\ x~\to~\infty.$ 

Thus, the integral in \eqref{th4:eqj2} is finite, and $I_3^{(2)}(T)\to 0,$  as ${A_T}/{T^{d+1}}\to 0,$ when $T\to\infty,$ which completes the proof. \end{proof}

\section{Conclusions and future work} 
The paper introduced a new class of supCAR random fields constructed as superpositions of continuous autoregressive random fields. These fields possess infinitely divisible marginal distributions and new types of covariance functions. By specifying the superpositions, one can get different local behaviours of supCAR fields, making their realisations smoother or rougher, and, at the same time, exhibiting short- or long-range large-scale dependence. The asymptotics of integrated supCAR fields give four different limiting processes, depending on the decay of the correlation function and their marginal distributions: 
\begin{itemize}
    \item[-] For a short-range dependent supCAR field $X(\cdot)$ ($\pi(\cdot)$ has a density $p(\lambda)=l(1/\lambda)\lambda^{\alpha-1},\\ \alpha>2d+2$),  the limit process is the standard Brownian motion (Theorem \ref{th:brow}). 
    \item[-] For a long-range dependent supCAR field $X(\cdot)$ ($\alpha\leq 2d+2$) containing the Gaussian component in the underlying CAR(1) fields ($b>0$ in \eqref{prop:cum_func_mom}), the limit is the generalised Brownian motion (Theorem~\ref{th:herm}). 
    \end{itemize}
For a long-range dependent supCAR field $X(\cdot)$ that does not contain the Gaussian component in the underlying CAR(1) fields ($b=0$ in \eqref{prop:cum_func_mom}), there are two possible limit scenarios depending on the asymptotics of the L\'evy measure $W(\cdot)$ at the origin. 
\begin{itemize}
    \item[-] If the Blumental-Getour index of $W(\cdot)$ satisfies $\alpha/(d+1)<\beta_{BG}<\min(2, \alpha-d),$ then the limit is the $\gamma$-process (Theorem \ref{th6}).
    \item[-] For $0<\beta_{BG}<\alpha/(d+1)$ the limit is the $\alpha$-stable L\'evy process (Theorem \ref{th7}).
\end{itemize}
Figure~\ref{fig_sum} visualises the obtained results. 
\begin{figure}[htb!]
  \centering
  \includegraphics[width=0.6\linewidth, height=0.4\linewidth]{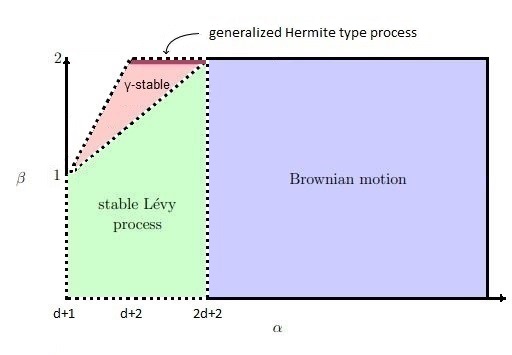}\vspace{-3mm}
  \caption{Parameter space diagram of possible limit scenarios}
  \label{fig_sum}
\end{figure}

In future research, it would be interesting to investigate asymptotic behaviours of the integrated supCAR fields when $\beta_{BG} = \alpha/(d+1)$ and obtain weak convergence results in the
space~$C[0,1].$ It is expected that the weak convergence in the space $C[0,1]$ holds is Theorems~\ref{th:brow} and~\ref{th:herm} under the same conditions, and in Theorem \ref{th6} under additional conditions on moments of the supCAR field $X(\cdot),$ while Theorem \ref{th7}  may not have extensions for the weak convergence in the space of c\`adl\`ag functions $D[0,1]$ equipped with Skorokhod's $J_1$ topology. It would also be interesting to investigate the behaviour of moments and other functionals of the integrated supCAR fields.

Another natural future direction is to investigate whether supCAR fields exhibit intermittency, particularly within the long-range dependence regime. It is reasonable to expect such behaviour, as intermittency effects frequently emerge in spatial settings. To establish the presence of intermittency, one may examine the growth rates of the moments in the considered limiting regimes. It is anticipated that, in analogy with the one-dimensional models \cite{grahovac2019limit}, intermittency will manifest through changes in these rates.

\section*{Acknowledgements} This research was supported by the Australian Research Council's Discovery Projects funding scheme (project DP220101680). I.~Donhauzer and A.~Olenko were partially supported by La Trobe University's SCEMS CaRE and Beyond grant. N. Leonenko was partially supported under the Croatian Scientific Foundation (HRZZ) grant “Scaling in Stochastic Models” (IP-2022-10-8081), grant FAPESP 22/09201-8 (Brazil) and the Taith Research Mobility grant (Wales, Cardiff University, 2025).




\end{document}